\documentclass[a4paper, reqno]{amsart}
\usepackage{graphicx}
\usepackage[bookmarksnumbered,colorlinks,plainpages]{hyperref}

\usepackage{float}
\usepackage{amsthm}
\usepackage[all,2cell]{xy}
\usepackage{amsfonts}
\usepackage{amsmath}
\usepackage{amssymb}
\usepackage{xypic,leqno,amscd,amssymb,pstricks,latexsym,amsbsy,xypic,mathrsfs,verbatim}
\usepackage{multirow}
\usepackage{booktabs}
\usepackage{makecell}
\usepackage{cases}
\usepackage{appendix}
\usepackage{graphicx,colortbl}

\usepackage{etex}
\usepackage{mathtools}

\usepackage{tikz}
\usetikzlibrary{positioning,arrows}
\usetikzlibrary{decorations.pathreplacing}

\UseAllTwocells \SilentMatrices
\newtheorem{thm}{Theorem}[section]

\newtheorem{cor}[thm]{Corollary}
\newtheorem{lem}[thm]{Lemma}

\newtheorem{prop}[thm]{Proposition}
\theoremstyle{definition}

\newtheorem{exm}[thm]{Example}
\theoremstyle{remark}

\numberwithin{equation}{section}

\newenvironment{customthm}[1]
{\innercustomthm}
{\endinnercustomthm}

\newenvironment{customques}[1]
{\innercustomques}
{\endinnercustomques}

\begin{document}
\input xy
\xyoption{all}

\title[On two open questions for extension bundles]
{On two open questions for extension bundles}

\author[Qiang Dong]{Qiang Dong}
\address{School of Mathematical Sciences, Shanghai Jiao Tong University, Shanghai, 200240, P.R. China.}
\email{dongqiang\_sjtu@sjtu.edu.cn}

\author[Shiquan Ruan]{Shiquan Ruan$^*$}
\address{School of Mathematical Sciences, Xiamen University, Xiamen 361005, P.R.China}
\email{sqruan@xmu.edu.cn}

\makeatletter \@namedef{subjclassname@2020}{\textup{2020} Mathematics Subject Classification} \makeatother

\thanks{$^*$ the corresponding author}
\subjclass[2020]{14H60, 16G60, 16G70, 18G80}
\date{\today}
\keywords{Tilting object; extension bundle; Auslander bundle; stable category; weighted projective line}
\maketitle

\begin{abstract}
In this paper we give positive answers for two open questions on extension bundles over weighted projective lines, raised by Kussin, Lenzing and Meltzer in the paper ``Triangle singularities, ADE-chains and weighted projective lines''.
\end{abstract}

\section{Introduction}

Weighted projective lines and their coherent sheaf categories are introduced by Geigle and Lenzing in \cite{[GL]}, in order to give a geometric realization of canonical algebras in the sense of Ringel \cite{[Rin]}.
The subcategory $\textup{vect}\mbox{-}\mathbb{X}$ of vector bundles over a weighted projective line $\mathbb{X}$ carries a distinguished Frobenius exact structure, with the system of all line bundles as the indecomposable projective-injective objects, and then the induced stable category $\underline{\textup{vect}}\mbox{-}\mathbb{X}$ is a triangulated category by a general result of Happel \cite{[H]}. The stable category of vector bundles are closely related to many mathematical subjects, such as Kleinian singularity and Fuchsian singularity \cite{Lenzing2011Weighted,[ld],[E2007], [G1974]}, matrix factorization and Cohen-Macaulay module \cite{[KST1],[KST2]}, submodule category and monomorphism category \cite{[Chen1],[Chen],GaoZhang, [HLXZ],LuoZhang,MarPena, RS2006,RS2008,[RS], [RZ],[RZ2017],XiongZhangZhang,[Z]}.

In \cite{[KLM2]}, Kussin, Lenzing and Meltzer established a surprising link between
vector bundles over weighted projective lines and the invariant subspace problem of nilpotent operators as studied by Ringel and Schmidmeier \cite{[RS]}, a problem with a longstanding history going back to Birkhoff \cite{[Birk]}. They showed that
the stable category of invariant subspaces of nilpotent operators of nilpotency degree $p$ is equivalent to the stable category $\underline{\textup{vect}}\mbox{-}\mathbb{X}$ for $\mathbb{X}$ of weight type $(2,3,p)$ as triangulated categories. Consequently, these categories have Serre duality and admit tilting objects, whose endomorphism algebras are Nakayama algebras.

The results in \cite{[KLM2]} have many generalizations in different directions. Ladkani \cite{[Lad]} showed that the above Nakayama algebras (line shape) and tensor algebras (rectangle shape) are derived equivalent to certain Auslander algebras (triangle shape), which have been applied in \cite{[LMR]} to classify Nakayama algebras which are derived equivalent to Fuchsian singularities.
Simson considered Birkhoff type problems for nilpotent linear operators for ``two-flag one-peak'' posets in \cite{[Simson2015],[Simson2018]}, and related them to weighted projective line of (general) weight type $(p_1,p_2,p_3)$. For recollements and ladders associated to the stable categories of vector bundles, we refer to \cite{[Chen2012],[LL], [R]}.

For any weighted projective line $\mathbb{X}$ of weight type ${\mathbf{p}}=(p_1,p_2,p_3)$, the Picard group $\textup{Pic}(\mathbb{X})$ of $\mathbb{X}$ is isomorphic to the rank one abelian group $\mathbb{L}=\mathbb{L}({\mathbf{p}})$ on three generators $\vec{x}_1,\vec{x}_2,\vec{x}_3$ subject to the relations $p_1\vec{x}_1=p_2\vec{x}_2=p_3\vec{x}_3:=\vec{c}$, where $\vec{c}$ is called the canonical element of $\mathbb{L}$. For any line bundle $L$ and $\vec{x}$ in $\mathbb{L}$ with $0\leq\vec{x}\leq \sum\limits_{i=1}^{3}(p_i-2)\vec{x}_i$, the indecomposable middle term of the following ``unique'' non-split exact sequence
\begin{equation}\label{extension term}0\rightarrow L(\vec{\omega})\rightarrow  E_L\langle\vec{x}\rangle\to L(\vec{x})\rightarrow  0\end{equation} is called the \emph{extension bundle}, where $\vec{\omega}=\vec{c}-\sum\limits_{i=1}^{3}\vec{x}_i$ is the dualizing element of $\mathbb{L}$.
In particular, it is called an \emph{Auslander bundle} if $\vec{x}=0$.

It has been proven that each indecomposable rank-two bundle in $\textup{coh}\mbox{-}\mathbb{X}$ is an extension bundle, which plays a key role in investigating $\underline{\textup{vect}}\mbox{-}\mathbb{X}$, since the coherent sheaves of rank-zero (torsion sheaves) and of rank-one (line bundles) vanished in $\underline{\textup{vect}}\mbox{-}\mathbb{X}$. Many properties of this triangulated category were discussed in \cite{[KLM]}, and almost all the major results there are based on the features of extension bundles.

Denote by $\mathcal{V}_2$ the set of isomorphism classes of extension bundles in $\textup{vect}\mbox{-}\mathbb{X}$. Then $\textup{Pic}(\mathbb{X})$ acts on $\mathcal{V}_2$ by line bundle twist
\begin{align*}\textup{Pic}(\mathbb{X})\times\mathcal{V}_2\rightarrow\mathcal{V}_2;\quad (\vec{x},E)\mapsto E(\vec{x}).
\end{align*}
While the action is transitive for Auslander bundles, it is not the case for extension bundles. Kussin, Lenzing and Meltzer in \cite{[KLM]} have showed that the number of the orbit set $\mathcal{V}_2/\textup{Pic}(\mathbb{X})$ has an upper bound $(p_1-1)(p_2-1)(p_3-1)$. There is an open question stated in \cite[Remark 9.4]{[KLM]}:

\begin{customques}{\bf A}(\cite[Remark 9.4]{[KLM]})\label{AA}
What is the exact number of the orbit set $\mathcal{V}_2/\textup{Pic}(\mathbb{X})$?
\end{customques}

In this paper, we describe when two extension bundles are in the same orbit under the line bundle twist action in Section \ref{section2}. Basing on this observation, we obtain the exact number $|\mathcal{V}_2/\textup{Pic}(\mathbb{X})|$ via a Klein four-group action on certain set of extension bundles.

\begin{customthm}{\bf A}
[Theorem~\ref{exact number of orbits}] \label{Thm A}
Let $m$ be the number of even weights in the weight sequence $(p_1, p_2, p_3)$. Then
$$|\mathcal{V}_2/\textup{Pic}(\mathbb{X})|=\left\{
\begin{array}{ll}
\frac{1}{4}\prod\limits_{i=1}^{3}(p_i-1) & m\leq 1\\
\frac{1}{4}(\prod\limits_{i=1}^{3}(p_i-1)+(p_j-1)) & m=2\textup{\;\;with\;\;}p_j\textup{\;odd}\\
\frac{1}{4}(\prod\limits_{i=1}^{3}(p_i-1)+\sum\limits_{i=1}^{3}(p_i-1)) & m=3.
\end{array}\right.$$
\end{customthm}

For weight type $(2,3,p),\;p\geq2$, Kussin, Lenzing and Meltzer have constructed a tilting object in $\underline{\textup{vect}}\mbox{-}\mathbb{X}$ consisting of Auslander bundles, see \cite[Proposition 6.9]{[KLM]}. They raise the following open question.

\begin{customques}{\bf B}(\cite[Remark 6.10]{[KLM]})\label{bB}
For weight type $(2,a,b)$ with $a,b\geq2$, does there exist a tilting object in $\underline{\textup{vect}}\mbox{-}\mathbb{X}$ consisting of Auslander bundles?
\end{customques}

The second main result of this paper is to give a confirmation of this question.

\begin{customthm}{\bf B}
[Theorems~\ref{tilting auslander bundle 1} and \ref{tilting auslander bundle 2}]\label{Thm B}
Let $\mathbb{X}$ be a weighted projective line of weight type $(2,a,b)$ with $a,b\geq 2$, and let $E$ be an Auslander bundle. Then there are two tilting objects in $\underline{\textup{vect}}\mbox{-}\mathbb{X}$ as follows
\begin{itemize}
\item[(1)] $T_1=\bigoplus\{E(i\bar{x}_2+j\bar{x}_3)|0\leq i\leq b-2, 0\leq j\leq a-2\}$;
\item[(2)] $T_2=\bigoplus\{E(i\bar{x}_1+j\bar{x}_3)|0\leq i\leq b-2,0\leq j\leq a-2\}$.
\end{itemize}
\end{customthm}

This paper is organized as follows. In Section \ref{section2}, we describe when two extension bundles are in the same orbit and prove Theorem \ref{Thm A}. In Section \ref{SectionApp},
we show that projective covers and injective hulls are invariants for extension bundles, and investigate the stability for them. Section \ref{Section4} is devoted to proving Theorem \ref{Thm B}.

\section{Orbit counting formula for extension bundles}\label{section2}
Let ${\bf{k}}$ be an algebraically closed field and $\mathbb{X}$ be a weighted projective line of weight type $(p_1, p_2, p_3)$, where the integers $p_i\geq 2$ for $i=1,2,3$. Following \cite{[GL]}, we denote by $\textup{coh}\mbox{-}\mathbb{X}$ the category of coherent sheaves on $\mathbb{X}$ and $\textup{vect}\mbox{-}\mathbb{X}$ its full subcategory consisting of vector bundles.

\subsection{A canonical basis of the Grothendieck group}
Recall that each element in $\mathbb{L}:=\mathbb{L}(p_1,p_2,p_3)$ can be uniquely written in normal form
\begin{equation}\label{normal form}
\vec{x}=\sum\limits_{i=1}^{3}l_i\vec{x}_i+l\vec{c}, \textup{\ \ with\ \ }l_i,
l\in\mathbb{Z}\textup{\ \ and\ \ }0\leq l_i\leq p_i-1.
\end{equation}
Up to isomorphisms the line bundles are given by the system $\mathscr{L}$ of twisted structure sheaves $\mathcal{O}(\vec{x})$ with $\vec{x}\in\mathbb{L}$, where $\mathcal{O}$ is the structure sheaf of $\mathbb{X}$.

Denote by $K_0(\mathbb{X})$ the Grothendieck group of $\textup{coh}\mbox{-}\mathbb{X}$. By \cite[Proposition 4.1]{[GL]},
$T_{\textup{can}}=\bigoplus_{0\leq\vec{x}\leq\vec{c}}\mathcal{O}(\vec{x})$ is a canonical tilting sheaf in $\textup{coh}\mbox{-}\mathbb{X}$. Hence their classes $[\mathcal{O}(\vec{x})]$ with $0\leq \vec{x}\leq \vec{c}$ form a basis of $K_0(\mathbb{X})$. Therefore, each element in the Grothendieck group can be expressed as a linear combination of this basis. In particular, for line bundles we have the following explicit expressions:

\begin{prop}\label{Grothendieck group of line bundles in normal form}
Assume that $\vec{x}=\sum\limits_{i=1}^{3}l_{i}\vec{x}_{i}+l\vec{c}$ is in
normal form. Then we have
\begin{equation}\label{2.1}[\mathcal{O}(\vec{x})]=\sum\limits_{i=1}^{3}[\mathcal{O}(l_{i}\vec{x}_{i})]+l[\mathcal{O}(\vec{c})] - (l+2)[\mathcal{O}].\end{equation}
\end{prop}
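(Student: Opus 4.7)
The plan is to build up $\mathcal{O}(\vec{x})$ from $\mathcal{O}$ one grading shift at a time, using the standard short exact sequences in $\coh\text{-}\bbX$ that relate a line bundle to its twist by $\vec{x}_i$ via a simple torsion sheaf at the exceptional point $x_i$. The whole statement then reduces to a bookkeeping exercise in the Grothendieck group.

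Concretely, I would begin by recalling that for each $i\in\{1,2,3\}$ and each line bundle $L=\mathcal{O}(\vec{y})$ there is a short exact sequence
\begin{equation*}
0\to \mathcal{O}(\vec{y})\to \mathcal{O}(\vec{y}+\vec{x}_i)\to S_{i,j}\to 0
\end{equation*}
in $\coh\text{-}\bbX$, where $S_{i,j}$ is an appropriate simple torsion sheaf concentrated at $x_i$ (the index $j$ depends on the starting point $\vec{y}$ modulo $p_i\vec{x}_i$). Iterating this $p_i$ times yields
\begin{equation*}
[\mathcal{O}(\vec{c})]-[\mathcal{O}]=\sum_{j=0}^{p_i-1}[S_{i,j}],
\end{equation*}
which is independent of $i$, and iterating it $l_i$ times starting from $\mathcal{O}$ gives
\begin{equation*}
[\mathcal{O}(l_i\vec{x}_i)]-[\mathcal{O}]=\sum_{j=0}^{l_i-1}[S_{i,j}].
\end{equation*}

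To conclude, I would traverse the path $\mathcal{O}\rightsquigarrow \mathcal{O}(l_1\vec{x}_1)\rightsquigarrow\mathcal{O}(l_1\vec{x}_1+l_2\vec{x}_2)\rightsquigarrow\mathcal{O}(\sum_{i}l_i\vec{x}_i)\rightsquigarrow\mathcal{O}(\vec{x})$ in $\bbL$, applying the above telescoping $l_i$ times in each of the three $\vec{x}_i$-directions and then $l$ times in the $\vec{c}$-direction. This produces
\begin{equation*}
[\mathcal{O}(\vec{x})]-[\mathcal{O}]=\sum_{i=1}^{3}\sum_{j=0}^{l_i-1}[S_{i,j}]+l\bigl([\mathcal{O}(\vec{c})]-[\mathcal{O}]\bigr)=\sum_{i=1}^{3}\bigl([\mathcal{O}(l_i\vec{x}_i)]-[\mathcal{O}]\bigr)+l\bigl([\mathcal{O}(\vec{c})]-[\mathcal{O}]\bigr),
\end{equation*}
and rearranging yields exactly \eqref{2.1}.

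The only subtlety, rather than a real obstacle, is to make sure that intermediate twists by $\vec{x}_i$ produce the same total class $[\mathcal{O}(\vec{c})]-[\mathcal{O}]$ of torsion summed over a full period regardless of where one starts; this is precisely the content of the observation that the three sums $\sum_{j=0}^{p_i-1}[S_{i,j}]$ agree. Once this is in hand, the derivation is purely formal and does not require the normal form hypothesis beyond ensuring $0\le l_i\le p_i-1$ so that no ``carry'' into the $\vec{c}$-direction occurs while adjusting the $\vec{x}_i$-coordinates.
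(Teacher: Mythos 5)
Your proposal is correct and follows essentially the same route as the paper: both telescope along a path from $\mathcal{O}$ to $\mathcal{O}(\vec{x})$ using the short exact sequences $0\to\mathcal{O}(\vec{y})\to\mathcal{O}(\vec{y}+\vec{x}_i)\to S_{i,j}\to 0$, observe that the resulting sums of exceptional simple classes reproduce $[\mathcal{O}(l_i\vec{x}_i)]-[\mathcal{O}]$ independently of the starting point, and handle the $\vec{c}$-direction by the relation $[\mathcal{O}(\vec{y}+\vec{c})]-[\mathcal{O}(\vec{y})]=[\mathcal{O}(\vec{c})]-[\mathcal{O}]$ (the paper realizes this via an ordinary simple sheaf, you via a full period of exceptional simples, which is an immaterial difference). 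The bookkeeping at the end matches \eqref{2.1} exactly.
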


\begin{proof}
Let $S_{i,j+1}$ be the exceptional simple sheaf arising from the exact sequence:
$$0\rightarrow\mathcal{O}(j\vec{x}_i)\rightarrow\mathcal{O}\big((j+1)\vec{x}_i\big)\rightarrow S_{i,j+1}\rightarrow0$$
for $i=1,2,3$ and $j=0,1,\cdots,p_i-1$. Note that $S_{i,j}(\vec{y})=S_{i,j+l'_i}$ for any $\vec{y}=\sum\limits_{i=1}^{3}l'_{i}\vec{x}_{i}+l'\vec{c}$. Then
$$[\mathcal{O}(\vec{x})]-[\mathcal{O}(\vec{x}-l_{i}\vec{x}_{i})]=\sum\limits_{j=1}^{l_i}[S_{i,j}]=[\mathcal{O}(l_{i}\vec{x}_{i})]-[\mathcal{O}].$$
Therefore, \begin{equation}\label{OS1}[\mathcal{O}(\vec{x})]-[\mathcal{O}(l\vec{c})]=
[\mathcal{O}(\vec{x})]-[\mathcal{O}(\vec{x}-\sum\limits_{i=1}^{3}l_i\vec{x}_i)]=
\sum\limits_{i=1}^{3}\big([\mathcal{O}(l_{i}\vec{x}_{i})]-[\mathcal{O}]\big).\end{equation} Let $S$ be an ordinary simple sheaf. Then $S(\vec{y})=S$ for any $\vec{y}\in\mathbb{L}$, and we have $[\mathcal{O}(l\vec{c})]-[\mathcal{O}\big((l-1)\vec{c}\big)]=[S]=[\mathcal{O}(\vec{c})]-[\mathcal{O}]$ for any $l\in\mathbb{Z}$. Hence, \begin{equation}\label{OS2}[\mathcal{O}(l\vec{c})]=l([\mathcal{O}(\vec{c})]-[\mathcal{O}])+[\mathcal{O}].\end{equation}
Combining (\ref{OS1}) and (\ref{OS2}) we have
$$[\mathcal{O}(\vec{x})]=([\mathcal{O}(\vec{x})]-[\mathcal{O}(l\vec{c})])+[\mathcal{O}(l\vec{c})]=\sum\limits_{i=1}^{3}[\mathcal{O}(l_{i}\vec{x}_{i})]+l[\mathcal{O}(\vec{c})] - (l+2)[\mathcal{O}].$$
This finishes the proof.
\end{proof}

Recall that the rank function $\textup{rk}$ is a linear form on $K_0(\mathbb{X})$ characterized by $\textup{rk}(L)=1$ for each $L\in\mathscr{L}$. The determinant function $\textup{det}:\;K_0(\mathbb{X})\rightarrow\mathbb{L}$ is a group homomorphism characterized by $\textup{det}\;\mathcal{O}(\vec{x})=\vec{x}$ for each $\vec{x}\in\mathbb{L}$. We have the following observation.

\begin{prop}
\label{x+y=z+u}
Assume that $\vec{x}=\sum\limits_{i=1}^{3}l_{i}\vec{x}_{i}+l\vec{c}$, $\vec{y}=\sum\limits_{i=1}^{3}k_{i}\vec{x}_{i}+k\vec{c}$, $\vec{z}=\sum\limits_{i=1}^{3}\lambda_{i}\vec{x}_{i}+\lambda\vec{c}$, $\vec{u}=\sum\limits_{i=1}^{3}\mu_{i}\vec{x}_{i}+\mu\vec{c}$ are in normal form. Then $[\mathcal{O}(\vec{x})]+[\mathcal{O}(\vec{y})]=[\mathcal{O}(\vec{z})]+[\mathcal{O}(\vec{u})]$ if and only if $l+k=\lambda+\mu$ and $\{l_i,k_i\}=\{\lambda_i,\mu_i\}$ for any $i=1,2,3$.\end{prop}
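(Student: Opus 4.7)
The plan is to derive this as a direct consequence of Proposition~\ref{Grothendieck group of line bundles in normal form} together with the $\mathbb{Z}$-linear independence of the canonical basis $\{[\mathcal{O}(\vec{w})] \mid 0\leq \vec{w}\leq \vec{c}\}$ of $K_0(\mathbb{X})$.

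The ``if'' direction is immediate: applying formula~(\ref{2.1}) to each of the four terms, both sides expand to
$$\sum_{i=1}^{3}\bigl([\mathcal{O}(a_i\vec{x}_i)]+[\mathcal{O}(b_i\vec{x}_i)]\bigr)+s[\mathcal{O}(\vec{c})]-(s+4)[\mathcal{O}],$$
where $\{a_i,b_i\}=\{l_i,k_i\}=\{\lambda_i,\mu_i\}$ as multisets and $s=l+k=\lambda+\mu$.

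For the ``only if'' direction, I first expand both $[\mathcal{O}(\vec{x})]+[\mathcal{O}(\vec{y})]$ and $[\mathcal{O}(\vec{z})]+[\mathcal{O}(\vec{u})]$ via Proposition~\ref{Grothendieck group of line bundles in normal form}. Each resulting sum is a $\mathbb{Z}$-linear combination of classes drawn from
$$\bigl\{[\mathcal{O}],\;[\mathcal{O}(\vec{c})]\bigr\}\cup\bigl\{[\mathcal{O}(j\vec{x}_i)]\mid 1\leq i\leq 3,\ 1\leq j\leq p_i-1\bigr\}.$$
All of these classes belong to the canonical basis of $K_0(\mathbb{X})$ indexed by $0\leq \vec{w}\leq \vec{c}$, so they are linearly independent and coefficient comparison is valid. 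Reading off the coefficient of $[\mathcal{O}(\vec{c})]$ gives $l+k=\lambda+\mu$, and for each fixed $i$ and each $j\in\{1,\ldots,p_i-1\}$, reading off the coefficient of $[\mathcal{O}(j\vec{x}_i)]$ yields
$$\#\{t\in\{l_i,k_i\}\mid t=j\}=\#\{t\in\{\lambda_i,\mu_i\}\mid t=j\}.$$
Since $\{l_i,k_i\}$ and $\{\lambda_i,\mu_i\}$ are each two-element multisets with entries in $\{0,1,\ldots,p_i-1\}$, agreement on every positive value forces agreement on the multiplicity of $0$ as well, and hence $\{l_i,k_i\}=\{\lambda_i,\mu_i\}$.

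No step looks genuinely hard. The only point that could be mishandled is thinking one must separately match the coefficient of $[\mathcal{O}]$, which mixes contributions from all three indices $i$ and from the global $-(l+k+4)$ term; the observation that the multiset equalities at positive values already determine the zero multiplicities bypasses this, and the $[\mathcal{O}]$-coefficient then balances automatically as a consistency check.
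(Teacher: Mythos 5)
Your proposal is correct and follows essentially the same route as the paper: expand all four classes via Proposition~\ref{Grothendieck group of line bundles in normal form} and compare coefficients in the canonical basis, first reading off the $[\mathcal{O}(\vec{c})]$-coefficient to get $l+k=\lambda+\mu$ and then matching the $[\mathcal{O}(j\vec{x}_i)]$ terms to get the multiset equalities. Your explicit handling of the $[\mathcal{O}]$-coefficient (noting that agreement at all positive $j$ forces agreement of the zero multiplicities) is a slightly more careful spelling-out of a step the paper leaves implicit, but the argument is the same.
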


\begin{proof}
By Proposition \ref{Grothendieck group of line bundles in normal form}, we have
\begin{equation}\label{xy}[\mathcal{O}(\vec{x})]+[\mathcal{O}(\vec{y})]=\sum\limits_{i=1}^{3}([\mathcal{O}(l_{i}\vec{x}_{i})]+[\mathcal{O}(k_{i}\vec{x}_{i})])+(l+k)[\mathcal{O}(\vec{c})] - (l+k+4)[\mathcal{O}]\end{equation}
and
\begin{equation}\label{zu}[\mathcal{O}(\vec{z})]+[\mathcal{O}(\vec{u})]=\sum\limits_{i=1}^{3}([\mathcal{O}(\lambda_{i}\vec{x}_{i})]+[\mathcal{O}(\mu_{i}\vec{x}_{i})])+(\lambda+\mu)[\mathcal{O}(\vec{c})] - (\lambda+\mu+4)[\mathcal{O}].\end{equation}
Hence the sufficiency follows immediately. We only need to prove the necessity in the following.

Note that $\{[\mathcal{O}(\vec{x})]|0\leq \vec{x}\leq \vec{c}\}$ is a basis of $K_0(\mathbb{X})$. By comparing the coefficient of $[\mathcal{O}(\vec{c})]$ in \eqref{xy} and \eqref{zu} we obtain $l+k=\lambda+\mu$. Consequently, one can obtain $$[\mathcal{O}(l_{i}\vec{x}_{i})]+[\mathcal{O}(k_{i}\vec{x}_{i})]-[\mathcal{O}(\lambda_{i}\vec{x}_{i})]-[\mathcal{O}(\mu_{i}\vec{x}_{i})]=0$$
for any $1\leq i\leq3$. It follows that $\{l_i,k_i\}=\{\lambda_i,\mu_i\}$ for $1\leq i\leq3$. We are done.
\end{proof}

\subsection{Isomorphisms between extension bundles}
Kussin, Lenzing and Meltzer \cite{[KLM]} showed that in $\textup{coh}\mbox{-}\mathbb{X}$ each indecomposable vector bundle of rank two is an extension bundle, i.e., has the form $E_L\langle\vec{x}\rangle$; cf.\eqref{extension term}. For the sake of simplicity, we denote $E_{\mathcal{O}}\langle \vec{x}\rangle$ by $E\langle\vec{x}\rangle$ in the rest of this paper. Moreover, all the extension bundles are exceptional in $\textup{coh}\mbox{-}\mathbb{X}$. Hence they are determined by their classes in the Grothendieck group $K_0(\mathbb{X})$.

The following result is a key observation on the feature of extension bundles. We describe which extension bundles are in the same orbit.

\begin{prop}\label{extension bundle which are in the same orbit}
Assume that $\vec{x}, \vec{y}, \vec{z}\in\mathbb{L}$ and $\vec{x}=\sum\limits_{i=1}^{3}l_i\vec{x}_i$ with $0\leq l_i\leq p_i-2$ for $1\leq i\leq 3$. Then $E\langle \vec{x}\rangle\cong E\langle \vec{y}\rangle(\vec{z})$ if and only if one of the following holds:
\begin{itemize}
\item[(i)] $\vec{y}=\vec{x}$ and $\vec{z}=0$;
\item[(ii)] $\vec{y}=l_j\vec{x}_j+\sum\limits_{i\neq j}(p_i-2-l_i)\vec{x}_i$ and $\vec{z}=\sum\limits_{i\neq j}(l_i+1)\vec{x}_i-\vec{c}$ for some $1\leq j\leq 3$.
\end{itemize}
\end{prop}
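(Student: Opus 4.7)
The plan is to exploit the fact, recalled just before the statement, that every extension bundle is exceptional in $\coh\bbX$ and is therefore determined up to isomorphism by its class in $K_0(\bbX)$. From~\eqref{extension term} and exactness of the line bundle twist,
\[
[E\langle\vx\rangle]=[\co(\vw)]+[\co(\vx)],\qquad [E\langle\vy\rangle(\vz)]=[\co(\vw+\vz)]+[\co(\vy+\vz)],
\]
so the asserted isomorphism is equivalent to the identity $[\co(\vw)]+[\co(\vx)]=[\co(\vw+\vz)]+[\co(\vy+\vz)]$ in $K_0(\bbX)$, which is exactly the setup of Proposition~\ref{x+y=z+u}.

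For sufficiency I would put $\vw=\sum_{i=1}^{3}(p_i-1)\vx_i-2\vc$ in normal form and then, in each of the asserted cases, compute the normal forms of $\vy+\vz$ and $\vw+\vz$ and check directly that Proposition~\ref{x+y=z+u}'s criteria (matching $\vc$-coefficient sums and matching unordered pairs at each index) hold. In case~(ii) this amounts to the calculation
\[
\vy+\vz=l_j\vx_j+\sum_{i\neq j}(p_i-1)\vx_i-\vc,\qquad \vw+\vz=(p_j-1)\vx_j+\sum_{i\neq j}l_i\vx_i-\vc,
\]
after which the pairs $\{p_i-1,l_i\}$ match those arising on the left side.

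For necessity, I would write $\vy=\sum k_i\vx_i$ with $0\le k_i\le p_i-2$ (forced by $E\langle\vy\rangle$ being an extension bundle) and $\vz=\sum n_i\vx_i+n\vc$ in normal form, and reduce $\vy+\vz$ and $\vw+\vz$ modulo the relations $p_i\vx_i=\vc$, introducing carries $q_i,r_i\in\{0,1\}$ so that the normal forms have $\vc$-coefficients $n+\sum q_i$ and $n-2+\sum r_i$. Proposition~\ref{x+y=z+u} then supplies the scalar identity $2n+\sum(q_i+r_i)=0$ together with the per-index condition $\{p_i-1,l_i\}=\{\lambda_i,\mu_i\}$; since $l_i\le p_i-2$, the value $p_i-1$ must occur in this pair at every index. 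A finite case analysis on $n\in\{-3,-2,-1,0\}$ (the range forced by $q_i+r_i\in\{0,1,2\}$) then leaves only $n=0$, giving case~(i) with $\vz=0$ and $\vy=\vx$, and $n=-1$ with exactly two indices contributing a carry, giving case~(ii) after picking the unique non-carrying index~$j$. The step I expect to be most delicate is eliminating $n\le -2$: at any index where $q_i+r_i=2$ one forces both $\lambda_i,\mu_i\le p_i-2$, blocking $p_i-1$ from appearing in the pair and so contradicting the per-index condition, and this mechanism has to be shown to fire at enough indices to kill the scalar identity.
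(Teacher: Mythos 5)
Your proposal is correct and follows essentially the same route as the paper: both reduce the isomorphism to an equality of classes in $K_0(\mathbb{X})$ via exceptionality, invoke Proposition~\ref{x+y=z+u}, and finish with a finite case analysis on the $\vec{c}$-coefficient of $\vec{z}$ (your carries $q_i,r_i$ and the identity $2n+\sum(q_i+r_i)=0$ are just a repackaging of the paper's analysis of $-l_i+k_i+2\lambda_i\in\{0,p_i,2p_i\}$ together with $\lambda\in\{0,-1\}$). The ``delicate step'' you flag does fire exactly as you describe and matches the paper's elimination of the $2p_j$ case.
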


\begin{proof}
Assume that $\vec{y}=\sum\limits_{i=1}^3 k_i\vec{x}_i$ and $\vec{z}=\sum\limits_{i=1}^3\lambda_i\vec{x}_i+\lambda \vec{c}$ are both in normal forms with $0\leq k_i\leq p_i-2$ for $1\leq i\leq 3$. Then $E\langle \vec{x}\rangle\cong E\langle \vec{y}\rangle (\vec{z})$ if and only if they have the same class in $K_0(\mathbb{X})$ since they are both exceptional in $\textup{coh}\mbox{-}\mathbb{X}$, that is,
\begin{equation}\label{classes expression} [\mathcal{O}(\vec{\omega})]+[\mathcal{O}(\vec{x})]=[\mathcal{O}(\vec{\omega}+\vec{z})]+[\mathcal{O}(\vec{y}+\vec{z})].\end{equation}
Observe that for any $1\leq i\leq 3$, the coefficients of $\vec{x}_i$ in the normal form of $\{\vec{\omega},\vec{x}\}$ are given by
$\{p_i-1, l_i\}$; while  that of
$\{\vec{\omega}+\vec{z},\vec{y}+\vec{z}\}$ are given by $\{\lambda_i-1\;(\textup{mod}\;p_i) , k_i+\lambda_i\;(\textup{mod}\;p_i)\}$.
By Proposition \ref{x+y=z+u}, we obtain that
\eqref{classes expression} holds if and only if \begin{equation}\label{mod equation}\{p_i-1, l_i\}=\{\lambda_i-1\;(\textup{mod}\;p_i) , k_i+\lambda_i\;(\textup{mod}\;p_i)\}\end{equation}
for $1\leq i\leq 3$, and $\vec{\omega}+\vec{x}=(\vec{\omega}+\vec{z})+(\vec{y}+\vec{z})$, i.e., \begin{equation}\label{degree equality} -\vec{x}+\vec{y}+2\vec{z}=0.\end{equation}

The sufficiency is easy to check. In fact, it is trivial for (i); while for (ii) it suffices to observe that $\vec{\omega}+\vec{z}=(p_j-1)\vec{x}_j+\sum_{i\neq j}l_i\vec{x}_i-\vec{c}$ and $\vec{y}+\vec{z}=l_j\vec{x}_j+\sum_{i\neq j}(p_i-1)\vec{x}_i-\vec{c}$.

Now we prove the necessity in the following. By \eqref{degree equality} we obtain
\begin{equation}\label{important equation}
\sum\limits_{i=1}^3(-l_i+k_i+2\lambda_i)\vec{x}_i+2\lambda \vec{c}=0.
\end{equation}
Since $-p_i+2\leq -l_i+k_i+2\lambda_i\leq 3p_i-4$, we have $-l_i+k_i+2\lambda_i=0, p_i$ or $2p_i$.

If there exists some $j$, such that $-l_j+k_j+2\lambda_j=2p_j$, then $\lambda_j>0$ and $p_j+1\leq k_j+\lambda_j\leq 2p_j-3$. Hence neither $\lambda_j-1$ nor $k_j+\lambda_j$
equal to $p_j-1$ modulo $p_j$, contradicting to \eqref{mod equation}.
Hence, $-l_i+k_i+2\lambda_i=0$ or $p_i$ for $i=1,2,3$, which implies
that $\lambda=0$ or $-1$ by (\ref{important equation}).
\begin{itemize}
\item[\emph{Case 1:}] $\lambda=0$, then for any $1\leq i\leq 3$, $-l_i+k_i+2\lambda_i=0$, which implies that $k_i+\lambda_i=l_i-\lambda_i\leq p_i-2$. Then by \eqref{mod equation} we have $\lambda_i=0$ and then $k_i=l_i$ for any $i$. Hence, $\vec{y}=\vec{x}$ and $\vec{z}=0$.
\item[\emph{Case 2:}] $\lambda=-1$, then by (\ref{important equation}), there exists some $j$, such
that $-l_j+k_j+2\lambda_j=0$ and $-l_i+k_i+2\lambda_i=p_i$ for any $i\neq j$. By similar arguments as in Case 1, we have $\lambda_j=0$ and $k_j=l_j$.
For $i\neq j$, by $-l_i+k_i+2\lambda_i=p_i$ we have $\lambda_i>0$. Hence $k_i+\lambda_i=p_i-\lambda_i+l_i\not\equiv l_i\;(\textup{mod}\;p_i)$.
By \eqref{mod equation} we have $l_i=\lambda_i-1$ and then $k_i=p_i-2-l_i$. It follows that $\vec{y}=l_j\vec{x}_j+\sum_{i\neq j}(p_i-2-l_i)\vec{x}_i$
and $\vec{z}=\sum_{i\neq j}(l_i+1)\vec{x}_i-\vec{c}$.
\end{itemize}
This finishes the proof.
\end{proof}

The following corollary reveals when an extension bundle is an Auslander bundle.

\begin{cor}\label{Auslander bundle}
Let $E\langle \vec{x}\rangle$ be an extension bundle with $0\leq \vec{x}\leq \sum\limits_{i=1}^{3}(p_i-2)\vec{x}_i$. Then $E\langle \vec{x}\rangle$ is an Auslander bundle if and only if $\vec{x}=0$ or $\vec{x}=\sum\limits_{i\neq j}(p_i-2)\vec{x}_i$ for some $1\leq j\leq 3$.
\end{cor}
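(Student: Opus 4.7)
The plan is to recognize this corollary as a direct specialization of Proposition~\ref{extension bundle which are in the same orbit} to the case when one of the extension bundles has index $\vec{0}$. First I would unpack what ``$E\langle \vec{x}\rangle$ is an Auslander bundle'' means: by definition an Auslander bundle is of the form $E_L\langle 0\rangle$ for some line bundle $L\in\mathscr{L}$, and since $L=\mathcal{O}(\vec{z})$ for some $\vec{z}\in\mathbb{L}$, the extension sequence shows $E_L\langle 0\rangle=E\langle 0\rangle(\vec{z})$. Thus the statement ``$E\langle \vec{x}\rangle$ is an Auslander bundle'' is equivalent to the existence of $\vec{z}\in\mathbb{L}$ with
\[
E\langle \vec{x}\rangle\cong E\langle 0\rangle(\vec{z}).
\]

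For the forward direction, I would apply Proposition~\ref{extension bundle which are in the same orbit} with $\vec{y}=0$ (which trivially satisfies $0\le k_i\le p_i-2$). This gives the dichotomy: either case (i) holds, in which case $\vec{y}=\vec{x}=0$ and we are done; or case (ii) holds for some $1\le j\le 3$, yielding
\[
\vec{y}=l_j\vec{x}_j+\sum_{i\neq j}(p_i-2-l_i)\vec{x}_i=0.
\]
Since this is a normal form expression (each coefficient lies in $[0,p_i-1]$), uniqueness of normal forms forces $l_j=0$ and $l_i=p_i-2$ for $i\neq j$, which is precisely $\vec{x}=\sum_{i\neq j}(p_i-2)\vec{x}_i$.

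For the converse direction, both possibilities for $\vec{x}$ feed back into Proposition~\ref{extension bundle which are in the same orbit} to produce an explicit witness. If $\vec{x}=0$ there is nothing to show. If $\vec{x}=\sum_{i\neq j}(p_i-2)\vec{x}_i$ for some $j$, then substituting $l_j=0$, $l_i=p_i-2$ ($i\neq j$) into case (ii) of Proposition~\ref{extension bundle which are in the same orbit} gives
\[
\vec{y}=0,\qquad \vec{z}=\sum_{i\neq j}(p_i-1)\vec{x}_i-\vec{c},
\]
so that $E\langle \vec{x}\rangle\cong E\langle 0\rangle(\vec{z})$ is an Auslander bundle.

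There is essentially no real obstacle here, as the corollary is bookkeeping on top of Proposition~\ref{extension bundle which are in the same orbit}. The only point that requires a moment of care is the normal form argument that converts the vanishing of $l_j\vec{x}_j+\sum_{i\neq j}(p_i-2-l_i)\vec{x}_i$ into the individual coefficient conditions; this is immediate from uniqueness of the normal form \eqref{normal form} because all coefficients already lie in the allowed range $[0,p_i-1]$.
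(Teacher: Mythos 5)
Your proposal is correct and takes exactly the route the paper intends: the corollary is stated there without proof as an immediate consequence of Proposition~\ref{extension bundle which are in the same orbit}, and your argument is precisely that specialization to $\vec{y}=0$, with the normal-form uniqueness step and the explicit witness $\vec{z}=\sum_{i\neq j}(p_i-1)\vec{x}_i-\vec{c}$ filling in the only details the paper leaves implicit.
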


\subsection{$\mathbb{L}$-orbit of extension bundles}
As an application of Proposition \ref{extension bundle which are in the same orbit}, we can give the answer of Question \ref{AA}.

\begin{thm}\label{exact number of orbits}
Let $m$ be the number of even weights in the weight sequence $(p_1, p_2, p_3)$. Then
$$|\mathcal{V}_2/\textup{Pic}(\mathbb{X})|=\left\{
\begin{array}{ll}
\frac{1}{4}\prod\limits_{i=1}^{3}(p_i-1) & m\leq 1\\
\frac{1}{4}(\prod\limits_{i=1}^{3}(p_i-1)+(p_j-1)) & m=2\textup{\;\;with\;\;}p_j\textup{\;odd}\\
\frac{1}{4}(\prod\limits_{i=1}^{3}(p_i-1)+\sum\limits_{i=1}^{3}(p_i-1)) & m=3.
\end{array}\right.$$
\end{thm}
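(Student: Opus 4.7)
The plan is to apply the Cauchy--Frobenius (Burnside) orbit-counting formula to a Klein four-group action on the finite set
\[
\mathcal{E} = \Bigl\{ E\langle\vec{x}\rangle \;\Bigm|\; \vec{x}=\sum_{i=1}^{3}l_i\vec{x}_i,\; 0\leq l_i\leq p_i-2 \Bigr\},
\]
which is in bijection with the triples $(l_1,l_2,l_3)$ satisfying $0\le l_i\le p_i-2$ and hence has cardinality $\prod_{i=1}^{3}(p_i-1)$. Every extension bundle can be twisted into $\mathcal{E}$, so the canonical map $\mathcal{E}\to \mathcal{V}_2/\textup{Pic}(\mathbb{X})$ is surjective, and its fibres are the equivalence classes under the relation ``lying in the same $\textup{Pic}(\mathbb{X})$-orbit''. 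By Proposition~\ref{extension bundle which are in the same orbit}, this relation is generated by the three involutions $\sigma_1,\sigma_2,\sigma_3\colon \mathcal{E}\to\mathcal{E}$ defined by
\[
\sigma_j\Bigl(E\bigl\langle \sum_{i=1}^{3} l_i\vec{x}_i\bigr\rangle\Bigr) = E\Bigl\langle l_j\vec{x}_j + \sum_{i\neq j}(p_i-2-l_i)\vec{x}_i\Bigr\rangle.
\]

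A direct computation gives $\sigma_j^2=\textup{id}$ and $\sigma_i\sigma_j=\sigma_k$ whenever $\{i,j,k\}=\{1,2,3\}$. Hence $G=\{\textup{id},\sigma_1,\sigma_2,\sigma_3\}$ is a Klein four-group whose orbits on $\mathcal{E}$ coincide with the $\textup{Pic}(\mathbb{X})$-orbits on $\mathcal{V}_2$, and Burnside's lemma reduces the problem to evaluating $\sum_{g\in G}|\mathcal{E}^g|$ and dividing by $4$.

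Clearly $|\mathcal{E}^{\textup{id}}|=\prod_{i=1}^{3}(p_i-1)$. An element $E\langle\sum l_i\vec{x}_i\rangle$ is fixed by $\sigma_j$ iff $l_i=p_i-2-l_i$ for every $i\neq j$, which forces both $p_i$ with $i\neq j$ to be even with $l_i=\tfrac{p_i-2}{2}$ uniquely determined, while $l_j$ remains free. Therefore $|\mathcal{E}^{\sigma_j}|=p_j-1$ when the two weights other than $p_j$ are both even, and $|\mathcal{E}^{\sigma_j}|=0$ otherwise. Substituting into $|\mathcal{V}_2/\textup{Pic}(\mathbb{X})|=\tfrac{1}{4}\sum_{g\in G}|\mathcal{E}^g|$ and splitting by the number $m$ of even weights yields the three stated cases: for $m\le 1$ no $\sigma_j$ has both companion weights even and only the identity contributes; for $m=2$ only the involution $\sigma_j$ indexed by the unique odd weight $p_j$ contributes $p_j-1$; for $m=3$ every $\sigma_j$ contributes $p_j-1$, summing to $\sum_{i=1}^{3}(p_i-1)$.

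The genuine mathematical content is contained in Proposition~\ref{extension bundle which are in the same orbit}, which is already established; the remaining work is bookkeeping. The only point demanding care is verifying that the three $\sigma_j$ close up to a Klein four-group (rather than to a larger subgroup of permutations of $\mathcal{E}$), so that no $\textup{Pic}$-orbit is under- or over-counted, and that the parity analysis of $|\mathcal{E}^{\sigma_j}|$ matches the case division by $m$ exactly as stated.
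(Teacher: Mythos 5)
Your proposal is correct and follows essentially the same route as the paper: both pass to the set $\{E\langle\vec{x}\rangle \mid 0\leq \vec{x}\leq \sum_{i}(p_i-2)\vec{x}_i\}$, use Proposition~\ref{extension bundle which are in the same orbit} to identify the $\textup{Pic}(\mathbb{X})$-orbit relation with the orbits of the Klein four-group generated by the three involutions $\sigma_j$, and conclude by Burnside's lemma with the identical fixed-point count $|\mathcal{E}^{\sigma_j}|=p_j-1$ or $0$ according to the parity of the two companion weights.
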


\begin{proof}
Recall from \cite{[KLM]} that any $F\in\mathcal{V}_2$ is an extension bundle, hence $F$ has the form $E_L\langle\vec{x}\rangle$ for some line bundle $L$ and $\vec{x}\in\mathbb{L}$ with $0\leq\vec{x}\leq2\vec{\omega}+\vec{c}=\sum\limits_{i=1}^{3}(p_i-2)\vec{x}_i$. By definition, $F$ lies in the same orbit with $E\langle\vec{x}\rangle$. Denote by $$\mathcal{S}=\{E\langle\vec{x}\rangle | 0\leq \vec{x} \leq \sum\limits_{i=1}^{3}(p_i-2)\vec{x}_i\}.$$
For any $1\leq j\leq 3$, we define a map $$\sigma_j: \mathcal{S}\mapsto\mathcal{S}; \quad
E\langle\sum\limits_{i=1}^{3}l_i\vec{x}_i\rangle \mapsto E\langle l_j\vec{x}_j+\sum\limits_{i\neq j}(p_i-2-l_i)\vec{x}_i\rangle.$$
Then it is easy to see that for any $1\leq i, j\leq 3$, $\sigma_i\sigma_j=\sigma_j\sigma_i$ and $\sigma_j^2=\textup{id}$, where $\textup{id}$ is the identity map on $\mathcal{S}$. Hence $G=\{\textup{id}, \sigma_1, \sigma_2, \sigma_3\}$ is a Klein four-group which acts on the set $\mathcal{S}$. Moreover, by Proposition \ref{extension bundle which are in the same orbit}, any two extension bundles from $\mathcal{S}$ are in the same $\mathbb{L}$-orbit if and only if they are in the same $G$-orbit. Denote $n=|\mathcal{V}_2/\mathbb{L}|$, then by Burnside Formula,
$$n=\frac{1}{|G|}\sum\limits_{\sigma\in G}|\mathcal{S}^{\sigma}|,$$ where $|X|$ denotes the order of the set $X$ and $\mathcal{S}^{\sigma}=\{s\in \mathcal{S}|\sigma(s)=s\}$ is the subset of fixed points. Obviously, $|G|=4$ and $|\mathcal{S}^{\textup{id}}|=|\mathcal{S}|=\prod\limits_{i=1}^{3}(p_i-1)$. Moreover, for any $1\leq j\leq 3$, by Proposition \ref{extension bundle which are in the same orbit},
$$\sigma_j(E\langle\sum\limits_{i=1}^{3}l_i\vec{x}_i\rangle)=E\langle\sum\limits_{i=1}^{3}l_i\vec{x}_i\rangle$$
if and only $p_i-2-l_i=l_i$ for any $i\neq j$, which implies that $p_i$ is even and $l_i=\frac{p_i-2}{2}$ for $i\neq j$, hence in this case, $|\mathcal{S}^{\sigma_j}|=p_j-1$. Now we consider the following three cases with respect to the number $m$ of even weights in the weight sequence $(p_1, p_2, p_3)$.
\begin{itemize}
\item[(i)]  If $m\leq 1$, then for any $i$, $|\mathcal{S}^{\sigma_i}|=0$. Hence $n=\frac{1}{4}\prod\limits_{i=1}^{3}(p_i-1)$;
\item[(ii)] If $m=2$ and $p_j$ is the unique odd weight, then $|\mathcal{S}^{\sigma_j}|=p_j-1$ and $|S^{\sigma_i}|=0$ for $i\neq j$.
Hence $n=\frac{1}{4}(\prod\limits_{i=1}^{3}(p_i-1)+(p_j-1))$;
\item[(iii)] If $m=3$, then for any $i$, $|\mathcal{S}^{\sigma_i}|=p_i-1$.
Hence $n=\frac{1}{4}(\prod\limits_{i=1}^{3}(p_i-1)+\sum\limits_{i=1}^{3}(p_i-1))$.
\end{itemize}
\end{proof}

Recall that the group action of $\textup{Pic}(\mathbb{X})$ on $\mathcal{V}_2$ is transitive if there exists a unique orbit, i.e., $|\mathcal{V}_2/\textup{Pic}(\mathbb{X})|=1$. As an immediate consequence, we have the following result.

\begin{cor}
The Picard group of $\mathbb{X}$ acts transitively on the set of extension bundles if and only if $\mathbb{X}$ has weight type $(2,2,2), (2,2,3)$ or $(2,3,3)$.
\end{cor}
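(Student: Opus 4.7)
The plan is to leverage Theorem~\ref{exact number of orbits} and reduce the corollary to a short arithmetic case analysis. Transitivity of the Picard group action on $\mathcal{V}_2$ is equivalent to the orbit count $|\mathcal{V}_2/\textup{Pic}(\mathbb{X})|=1$, so I would run through each of the three cases in the orbit-counting formula and determine exactly which weight triples make the corresponding expression equal to $1$.

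First, in the case $m\leq 1$, the condition becomes $\prod_{i=1}^{3}(p_i-1)=4$. Up to reordering, the only factorizations of $4$ as a product of three positive integers are $4\cdot 1\cdot 1$ and $2\cdot 2\cdot 1$, yielding weight triples $(2,2,5)$ and $(2,3,3)$ respectively. The first of these has $m=2$ and is therefore excluded by the hypothesis $m\leq 1$, whereas $(2,3,3)$ has $m=1$ and is admissible. Next, in the case $m=2$ with $p_j$ odd, the condition is $\prod(p_i-1)+(p_j-1)=4$. Since $p_j\geq 3$ is odd we have $p_j-1\geq 2$, while the other two weights are even and at least $2$, contributing factors of at least $1$ each; inspecting the two summands shows the only solution is $p_j=3$ together with the remaining weights both equal to $2$, giving $(2,2,3)$. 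Finally, when $m=3$ all three weights are even, so every $p_i-1\geq 1$, and the condition $\prod(p_i-1)+\sum(p_i-1)=4$ is already attained at the minimum configuration $p_1=p_2=p_3=2$ with value $1+3=4$; any strict increase in some $p_i$ enlarges both summands, so $(2,2,2)$ is the unique solution.

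The main step of the argument is really just the appeal to Theorem~\ref{exact number of orbits}; after that the proof is pure bookkeeping, with the only point requiring a little care being the cross-check that the parity hypothesis $m$ in each case is consistent with the candidate weight triple produced. Since each of the three solutions $(2,3,3)$, $(2,2,3)$, $(2,2,2)$ lies in a different case, combining them gives precisely the three weight types asserted in the corollary, and there is no serious obstacle beyond the elementary case analysis.
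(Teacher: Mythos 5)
Your proposal is correct and follows exactly the route the paper intends: the corollary is stated there as an immediate consequence of Theorem~\ref{exact number of orbits}, with the proof being precisely the arithmetic case analysis of when each orbit-count expression equals $1$ (including the parity cross-check that discards $(2,2,5)$ from the $m\leq 1$ case). Nothing is missing.
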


\subsection{$\tau$-orbit of extension bundles}
In this subsection we will calculate the $\tau$-orbits of extension bundles. We always assume $\mathbb{X}$ is not of tubular type in this subsection, since there are infinity many $\tau$-orbits of line bundles and extension bundles for tubular case.

Let $\mathbb{L}'=\{\vec{x}\,|\,0\leq\vec{x}\leq\sum\limits_{i=1}^{3}(p_i-2)\vec{x}_i\}$ be a subset of $\mathbb{L}$. Then there is a canonical surjection \begin{equation}\label{pi}
    \pi:\;\mathscr{L}\times \mathbb{L}'\to\mathcal{V}_2,\;(L,\vec{x})\mapsto E_L\langle\vec{x}\rangle,
\end{equation}
which induces a surjection $$\pi':\;\mathscr{L}/{\langle\tau\rangle} \times \mathbb{L}'\to
\mathcal{V}_2/{\langle\tau\rangle}.$$
We will calculate the number of $\tau$-orbits of extension bundles via the cardinality of $\mathscr{L}/{\langle\tau\rangle}\times \mathbb{L}'$.

Note that $|\mathbb{L}'|=\prod\limits_{i=1}^{3}(p_i-1)$.
Let $\delta:\;\mathbb{L}\rightarrow\mathbb{Z}$ be the group homomorphism defined by $\delta(\vec{x}_i)=\frac{p}{p_i}$, where $p:=\text{l.c.m.}(p_1, p_2, p_3)$ denotes the least common multiple of $p_1, p_2, p_3$. Recall from \cite[Lemma 4.19]{Lenzing2011Weighted} that
the number of $\tau$-orbits of line bundles are given by
\begin{equation}\label{gongshi2.7}
|\mathscr{L}/{\langle\tau\rangle}|=[\mathbb{L}:\mathbb{Z}\vec{\omega}]=\frac{1}{p}\delta(\vec{\omega})\prod\limits_{i=1}^{3}p_i=(1-\sum\limits_{i=1}^{3}\frac{1}{p_i})\prod\limits_{i=1}^{3}p_i.\end{equation}

In order to determine the fiber of $\pi'$,  we define maps $\sigma_j$ ($1\leq j\leq 3$) as follows:
\begin{align*}
\sigma_j: \;&\mathscr{L}\times\mathbb{L}'\to\mathscr{L}\times\mathbb{L}';\\
&(L,\sum\limits_{i=1}^{3}l_i\vec{x}_i) \mapsto (L(\sum\limits_{i\neq j}l_i\vec{x}_i-\vec{x}_j),l_j\vec{x}_j+\sum\limits_{i\neq j}(p_i-2-l_i)\vec{x}_i).\end{align*}

\begin{lem}\label{CommutativeDiagram}
For any  $1\leq j\leq 3$, $\pi\circ \sigma_j=\tau^{-1}\circ \pi$, i.e., we have the following commutative diagram.
$$\xymatrix{\mathscr{L}\times \mathbb{L}'\ar[r]^{\sigma_j}\ar[d]^{\pi}&\mathscr{L}\times \mathbb{L}'\ar[d]^{\pi}\\
\mathcal{V}_2\ar[r]^{\tau^{-1}}&\mathcal{V}_2.}$$
\end{lem}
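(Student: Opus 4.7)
The plan is to unwind both sides of the claimed identity as explicit extension bundles and then match them via Proposition~\ref{extension bundle which are in the same orbit}(ii) together with a one-line computation in $\mathbb{L}$.

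Writing $\vec{x}=\sum_{i=1}^{3}l_i\vec{x}_i$ in normal form with $0\leq l_i\leq p_i-2$, I first read the left-hand side off from the definitions of $\pi$ and $\sigma_j$:
$$\pi\sigma_j(L,\vec{x}) = E_{L(\vec{w})}\langle\vec{y}\rangle, \qquad \vec{w}:=\sum_{i\neq j}l_i\vec{x}_i-\vec{x}_j,\quad \vec{y}:=l_j\vec{x}_j+\sum_{i\neq j}(p_i-2-l_i)\vec{x}_i.$$
For the right-hand side, I use that the Auslander--Reiten translate on $\textup{coh}\mbox{-}\mathbb{X}$ is a line-bundle twist determined by $\vec{\omega}$ (consistent with the orbit formula $[\mathbb{L}:\mathbb{Z}\vec{\omega}]$ in \eqref{gongshi2.7}), so that $\tau^{-1}\pi(L,\vec{x})$ can be realised as the twist $E_L\langle\vec{x}\rangle(\vec{\omega}) = E_{L(\vec{\omega})}\langle\vec{x}\rangle$.

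Next I invoke Proposition~\ref{extension bundle which are in the same orbit}(ii) for the given index $j$: it produces
$$E\langle\vec{x}\rangle \cong E\langle\vec{y}\rangle(\vec{z}), \qquad \vec{z}:=\sum_{i\neq j}(l_i+1)\vec{x}_i-\vec{c}.$$
Applying the line-bundle twist by $L$ turns this into $E_L\langle\vec{x}\rangle \cong E_{L(\vec{z})}\langle\vec{y}\rangle$, and a further twist by $\vec{\omega}$ upgrades it to
$$\tau^{-1}\pi(L,\vec{x}) \cong E_{L(\vec{z}+\vec{\omega})}\langle\vec{y}\rangle.$$
Comparing with the previously computed $\pi\sigma_j(L,\vec{x}) = E_{L(\vec{w})}\langle\vec{y}\rangle$, only the line-bundle indices are left to match, which is the single substantive step.

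That step is a direct calculation in $\mathbb{L}$: expanding $\vec{\omega}=\vec{c}-\sum_{i}\vec{x}_i$, the combination telescopes as
$$\vec{z}+\vec{\omega} = \sum_{i\neq j}(l_i+1)\vec{x}_i-\vec{c}+\vec{c}-\sum_{i}\vec{x}_i = \sum_{i\neq j}l_i\vec{x}_i-\vec{x}_j = \vec{w},$$
so the two extension bundles coincide and the square commutes. I do not foresee any real obstacle: the lemma is a bookkeeping lift of Proposition~\ref{extension bundle which are in the same orbit}(ii) to the level of the parametrisation $\pi$, where the shift $-\vec{x}_j$ appearing in the first component of $\sigma_j$ is precisely engineered to absorb the $\vec{\omega}$-discrepancy $\vec{z}-\vec{w}$. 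The only thing worth being careful with is the sign convention for $\tau$ versus $\vec{\omega}$, which must be tracked consistently with \eqref{gongshi2.7} throughout.
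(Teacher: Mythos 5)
Your proposal is correct and follows essentially the same route as the paper: unwind $\pi\circ\sigma_j$ from the definitions, invoke Proposition~\ref{extension bundle which are in the same orbit}(ii) to trade $E\langle\vec{y}\rangle(\vec{z})$ for $E\langle\vec{x}\rangle$, and check that the residual line-bundle twist telescopes via $\vec{z}+\vec{\omega}=\vec{w}$. Your closing caveat about the $\tau$-versus-$\vec{\omega}$ sign convention is well placed: the computation genuinely yields a twist by $+\vec{\omega}$ (the paper's own proof writes $E_L\langle\vec{x}\rangle(-\vec{\omega})$ at that step, a harmless sign slip, since only the cyclic group $\langle\tau\rangle$ matters in the subsequent orbit count).
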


\begin{proof}
For any $(L,\sum\limits_{i=1}^{3}l_i\vec{x}_i) \in \mathscr{L}\times\mathbb{L}'$, we have
\begin{align*}
\pi[\sigma_j(L,\sum\limits_{i=1}^{3}l_i\vec{x}_i)]&=\pi(L(\sum\limits_{i\neq j}l_i\vec{x}_i-\vec{x}_j),l_j\vec{x}_j+\sum\limits_{i\neq j}(p_i-2-l_i)\vec{x}_i)\nonumber\\
&=E_{L}\langle l_j\vec{x}_j+\sum\limits_{i\neq j}(p_i-2-l_i)\vec{x}_i\rangle(\sum\limits_{i\neq j}l_i\vec{x}_i-\vec{x}_j)\nonumber\\
&=E_L\langle\sum\limits_{i=1}^{3}l_i\vec{x}_i\rangle(-\vec{\omega})\\&=\tau^{-1}[\pi(L,\sum\limits_{i=1}^{3}l_i\vec{x}_i)],
\end{align*}
where the third equation follows from  Proposition \ref{extension bundle which are in the same orbit}. Hence $\pi\circ \sigma_j=\tau^{-1}\circ \pi$.
\end{proof}

Obviously, for $1\leq i\leq 3$, $\sigma_j$ induces an automorphism on $\mathscr{L}/\langle\tau\rangle\times \mathbb{L}'$, still denoted by $\sigma_j$.

\begin{lem}
Let $G$ be the automorphism subgroup of $\mathscr{L}/\langle\tau\rangle\times \mathbb{L}'$ generated by $\sigma_1,\sigma_2,\sigma_3$. Then $G$ is a Klein four-group, i.e.,
\begin{itemize}
    \item[(1)] $\sigma_i^2=\textup{id}$ for any $1\leq i\leq 3$;
     \item[(2)] $\sigma_{i}\sigma_j=\sigma_k$ for any $\{i,j,k\}=\{1,2,3\}$.
\end{itemize}
\end{lem}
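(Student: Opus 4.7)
The plan is to verify both relations directly on the pair $(L,\sum_i l_i \vec{x}_i)\in\mathscr{L}\times\mathbb{L}'$, working separately on each coordinate. The second-coordinate calculations are purely combinatorial (the involutions $l_i\mapsto p_i-2-l_i$), while the first-coordinate identities rely on two telescoping computations that reduce to the dualizing element $\vec{\omega}=\vec{c}-\sum_i\vec{x}_i$; these force the corrections to live in $\langle\tau\rangle$, so the equalities only hold after passing to $\mathscr{L}/\langle\tau\rangle\times\mathbb{L}'$.

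For (1), I apply $\sigma_j$ twice. On the second coordinate, $l_j$ is fixed while each $l_i$ with $i\neq j$ is hit by the involution $l_i\mapsto p_i-2-l_i$, so the square returns $\sum_i l_i\vec{x}_i$. On the first coordinate, the total line-bundle shift is
\[
\Bigl(\sum_{i\neq j}l_i\vec{x}_i-\vec{x}_j\Bigr)+\Bigl(\sum_{i\neq j}(p_i-2-l_i)\vec{x}_i-\vec{x}_j\Bigr)=\sum_{i\neq j}(p_i-2)\vec{x}_i-2\vec{x}_j.
\]
Using $\sum_{i=1}^{3}(p_i-2)\vec{x}_i=\vec{c}+2\vec{\omega}$ together with $p_j\vec{x}_j=\vec{c}$, this shift simplifies to $2\vec{\omega}$, hence $\sigma_j^2(L,\vec{x})=(L(2\vec{\omega}),\vec{x})=(\tau^{-2}L,\vec{x})$, which equals $(L,\vec{x})$ in $\mathscr{L}/\langle\tau\rangle\times\mathbb{L}'$.

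For (2), fix $\{i,j,k\}=\{1,2,3\}$ and compute $\sigma_i\sigma_j$ on $(L,\sum_a l_a\vec{x}_a)$. On the second coordinate, $\sigma_j$ produces the coefficients $(l'_j,l'_i,l'_k)=(l_j,p_i-2-l_i,p_k-2-l_k)$; then $\sigma_i$ fixes $l'_i=p_i-2-l_i$ and sends $l'_j,l'_k$ to $p_j-2-l'_j=p_j-2-l_j$ and $p_k-2-l'_k=l_k$, which matches the second coordinate of $\sigma_k(L,\sum_a l_a\vec{x}_a)$. On the first coordinate, a direct expansion shows $\sigma_i\sigma_j$ shifts $L$ by $(l_i-1)\vec{x}_i+(l_j-1)\vec{x}_j+(p_k-2)\vec{x}_k$, while $\sigma_k$ shifts $L$ by $l_i\vec{x}_i+l_j\vec{x}_j-\vec{x}_k$. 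The difference is
\[
(p_k-1)\vec{x}_k-\vec{x}_i-\vec{x}_j=\vec{c}-\sum_{a=1}^{3}\vec{x}_a=\vec{\omega},
\]
so the two first coordinates differ by the $\tau$-twist and coincide in $\mathscr{L}/\langle\tau\rangle$. Combined with (1), this yields $G=\{\textup{id},\sigma_1,\sigma_2,\sigma_3\}$ and exhibits the Klein four-group structure.

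The main obstacle is purely the indexing bookkeeping: the map $\sigma_j$ is asymmetric in $j$ versus the other two indices, and one must track which $l_a$ have been involuted when applying a second $\sigma$. Once the two key identities $\sum_{i\neq j}(p_i-2)\vec{x}_i-2\vec{x}_j=2\vec{\omega}$ and $(p_k-1)\vec{x}_k-\vec{x}_i-\vec{x}_j=\vec{\omega}$ are isolated, everything collapses; in particular the proof makes clear why working modulo $\langle\tau\rangle$ on the first factor is indispensable, since on $\mathscr{L}\times\mathbb{L}'$ itself $\sigma_j$ has infinite order.
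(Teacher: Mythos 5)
Your proof is correct and follows essentially the same route as the paper: a direct two-coordinate computation showing that $\sigma_j^2$ twists the line bundle by $2\vec{\omega}$ and that $\sigma_i\sigma_j$ differs from $\sigma_k$ by a twist by $\vec{\omega}$, both of which vanish in $\mathscr{L}/\langle\tau\rangle$. (Only a cosmetic quibble: since $\tau$ is the twist by $\vec{\omega}$, the first coordinate of $\sigma_j^2(L,\vec{x})$ is $\tau^{2}L$ rather than $\tau^{-2}L$, which of course changes nothing modulo $\langle\tau\rangle$.)
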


\begin{proof}
For any $L\in \mathscr{L}$ and $\vec{x}=\sum\limits_{t=1}^{3}l_t\vec{x}_t\in\mathbb{L}'$, we have
$$\sigma_i(L,\vec{x})=(L(\sum\limits_{t\neq i}l_t\vec{x}_t-\vec{x}_i),l_i\vec{x}_i+\sum\limits_{t\neq i}(p_t-2-l_t)\vec{x}_t).$$
Hence,
\begin{align*}
\sigma_i[\sigma_i(L,\vec{x})]&=(L(\sum\limits_{t\neq i}l_t\vec{x}_t-\vec{x}_i)(\sum\limits_{t\neq i}(p_t-2-l_t)\vec{x}_t-\vec{x}_i),\sum\limits_{t=1}^{3}l_t\vec{x}_t)\\
&=(L(2\vec{\omega}),\vec{x});\\
\sigma_{j}[\sigma_i(L,\vec{x})]&=\sigma_{j}(L(\sum\limits_{t\neq i}l_t\vec{x}_t-\vec{x}_i),l_i\vec{x}_i+\sum\limits_{t\neq i}(p_t-2-l_t)\vec{x}_t) \\
&=(L(l_{j}\vec{x}_{j}+l_{k}\vec{x}_{k}-\vec{x}_{i})[(p_k-2-l_{k})\vec{x}_{k}+l_{i}\vec{x}_{i}-\vec{x}_{j}],l_k\vec{x}_k+\sum\limits_{t\neq k}(p_t-2-l_t)\vec{x}_t)\\
&=(L[(p_k-2)\vec{x}_{k}+\sum\limits_{t\neq k}(l_t-1)\vec{x}_{t}],l_k\vec{x}_k+\sum\limits_{t\neq k}(p_t-2-l_t)\vec{x}_t)\\
&=(L(\vec{\omega})(\sum\limits_{t\neq k}l_t\vec{x}_t-\vec{x}_k),l_k\vec{x}_k+\sum\limits_{t\neq k}(p_t-2-l_t)\vec{x}_t)\\
&=\sigma_k(L(\vec{\omega}),\vec{x}).
\end{align*}
Therefore, in $\mathscr{L}/\langle\tau\rangle\times \mathbb{L}'$ we have
$$\sigma_i[\sigma_i(\overline{L},\vec{x})]=(\overline{L(2\vec{\omega})},\vec{x})=(\overline{L},\vec{x}) \text{\quad and\quad}
\sigma_{j}[\sigma_i(\overline{L},\vec{x})]=\sigma_k(\overline{L(\vec{\omega})},\vec{x})=\sigma_k(\overline{L},\vec{x}).$$
Then we are done.
\end{proof}

\begin{lem}
The group $G$ acts freely on $\mathscr{L}/\langle\tau\rangle\times \mathbb{L}'$.
\end{lem}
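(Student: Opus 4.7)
The plan is to show that every non-identity element of $G$ acts without fixed points on $\mathscr L/\langle\tau\rangle\times\mathbb L'$; by the previous lemma the three non-identity elements are exactly $\sigma_1,\sigma_2,\sigma_3$, so I would treat each $\sigma_j$ separately. Suppose, for contradiction, that $\sigma_j(\overline L,\vec x)=(\overline L,\vec x)$ for some $\vec x=\sum_{i=1}^{3}l_i\vec x_i\in\mathbb L'$. Equating the second coordinates (which live in $\mathbb L'$ and are in normal form) forces $l_i=p_i-2-l_i$ for $i\ne j$, i.e.\ $l_i=(p_i-2)/2$, so each $p_i$ with $i\ne j$ must be even (otherwise the second-coordinate equality already fails and there is nothing more to check). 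Equating the first coordinates in $\mathscr L/\langle\tau\rangle$ amounts to the assertion
\[
\vec z:=\sum_{i\ne j}l_i\vec x_i-\vec x_j\in\mathbb Z\vec\omega,
\]
so I may write $\vec z=n\vec\omega$ in $\mathbb L$ for some integer $n$.

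To pin down $n$ I would apply the group homomorphism $\delta:\mathbb L\to\mathbb Z$. Substituting $l_i=(p_i-2)/2$ produces the clean identity
\[
\delta(\vec z)=\sum_{i\ne j}\frac{p_i-2}{2}\cdot\frac{p}{p_i}-\frac{p}{p_j}=p-\sum_{i=1}^{3}\frac{p}{p_i}=\delta(\vec\omega),
\]
and hence $(n-1)\delta(\vec\omega)=0$. Because $\mathbb X$ is not of tubular type, $\delta(\vec\omega)=p\bigl(1-\sum_{i}1/p_i\bigr)\ne 0$, which forces $n=1$. Inserting $\vec\omega=\vec c-\sum_i\vec x_i$ into $\vec z=\vec\omega$ and collecting terms, the entire fixed-point assumption collapses to the single identity
\[
\sum_{i\ne j}\frac{p_i}{2}\vec x_i=\vec c\qquad\text{in }\mathbb L.
\]

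The last step is to rule this identity out. I would pass to the quotient $\mathbb L/\langle\vec c\rangle$, which, by a standard analysis of the defining relations $p_i\vec x_i=\vec c$, is isomorphic to $\mathbb Z/p_1\oplus\mathbb Z/p_2\oplus\mathbb Z/p_3$. There the right-hand side becomes $0$, while the left-hand side has $i$-th and $k$-th components equal to $p_i/2$ and $p_k/2$ in $\mathbb Z/p_i$ and $\mathbb Z/p_k$ respectively; since $0<p_s/2<p_s$ for $s\in\{i,k\}$, both components are nonzero. This contradiction shows $\sigma_j$ has no fixed point, and hence $G$ acts freely. The main obstacle, and the reason the argument needs two distinct steps, is that $\delta$ is blind to the $2$-torsion of $\mathbb L$: it pins $n$ down to the single value $1$ but cannot itself refute $\vec z=\vec\omega$, so a second invariant (reduction modulo $\vec c$) is needed to detect the obstruction.
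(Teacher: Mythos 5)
Your proposal is correct and follows essentially the same route as the paper: you analyse fixed points of each $\sigma_j$ separately, use uniqueness of normal form on the $\mathbb{L}'$-coordinate to force $l_i=(p_i-2)/2$ for $i\neq j$, write the $\mathscr{L}/\langle\tau\rangle$-condition as $\sum_{i\neq j}l_i\vec{x}_i-\vec{x}_j=n\vec{\omega}$, and derive a contradiction from $\delta(\vec{\omega})\neq 0$ together with the fact that $p_i/2$ is a nonzero residue modulo $p_i$. The only (immaterial) difference is bookkeeping: you pin down $n=1$ by applying $\delta$ and then refute $\vec{z}=\vec{\omega}$ via $\mathbb{L}/\langle\vec{c}\rangle\cong\bigoplus_i\mathbb{Z}/p_i$, whereas the paper compares coefficients in $\mathbb{L}$ directly to get $(r-1)\bigl(1-\sum_i 1/p_i\bigr)=0$ and rules out both alternatives at once.
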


\begin{proof}
We need to show that all the elements $(\overline{L},\vec{x})\in \mathscr{L}/\langle\tau\rangle\times \mathbb{L}'$ are not fixed by $\sigma_j$ for any $1\leq j\leq 3$.

For contradiction we assume $\sigma_j(\overline{L},\vec{x})=(\overline{L},\vec{x})$ for some $j$. Assume $\vec{x}=\sum\limits_{i=1}^{3}l_i\vec{x}_i$. Then we have
\begin{align}\label{29-1}
    \vec{x}=l_j\vec{x}_j+\sum\limits_{i\neq j}(p_i-2-l_i)\vec{x}_i
\end{align} and
\begin{align}\label{29-2}
    \sum\limits_{i\neq j}l_i\vec{x}_i-\vec{x}_j\in\mathbb{Z}\vec{\omega}.
\end{align}
By (\ref{29-1}), we have $l_i=p_i-2-l_i$, that is, $l_i+1=\frac{p_i}{2}$ for $i\neq j$. By (\ref{29-2}), $\sum_{i\neq j}l_i\vec{x}_i-\vec{x}_j=r\vec{\omega}$ for some $r\in\mathbb{Z}$, that is,
$\sum_{i\neq j}(l_i+r)\vec{x}_i+(r-1)\vec{x}_j=r\vec{c}$.
Hence $p_i|(l_i+r)$ for $i\neq j$, $p_j|(r-1)$ and $\sum_{i\neq j}\frac{l_i+r}{p_i}+\frac{r-1}{p_j}=r$. It implies that $(r-1)(1-\sum\limits_{i=1}^{3}\frac{1}{p_i})=0$. Therefore, $r=1$ or $\sum\limits_{i=1}^{3}\frac{1}{p_i}=1$, which contradict to $p_i|(l_i+r)$ and $\delta(\vec{\omega})\neq 0$, respectively.
We are done.
\end{proof}

\begin{prop}\label{210}
Assume that $\delta(\vec{\omega})\neq 0$. Then the number of $\tau$-orbits of extension bundles is given by
$\frac{1}{4}(1-\sum\limits_{i=1}^{3}\frac{1}{p_i})\prod\limits_{i=1}^{3}p_i(p_i-1)$.
\end{prop}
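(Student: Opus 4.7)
The plan is to identify the fibers of the induced surjection $\pi':\;\mathscr{L}/{\langle\tau\rangle}\times\mathbb{L}'\to\mathcal{V}_2/{\langle\tau\rangle}$ with the orbits of the Klein four-group action of $G=\{\mathrm{id},\sigma_1,\sigma_2,\sigma_3\}$ established in the preceding two lemmas, and then count by dividing by $|G|=4$. One direction is automatic: since $\pi\circ\sigma_j=\tau^{-1}\circ\pi$, the map $\pi'$ is constant on $G$-orbits. The freeness of $G$ (the lemma just above) guarantees that every $G$-orbit has exactly four elements, so it will suffice to prove that $\pi'$ collapses \emph{only} $G$-orbits.

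For this key step, suppose $\pi'(\overline{L_1},\vec{x}_1)=\pi'(\overline{L_2},\vec{x}_2)$, i.e.\ $E_{L_1}\langle\vec{x}_1\rangle\cong\tau^{n}E_{L_2}\langle\vec{x}_2\rangle\cong E_{L_2(n\vec{\omega})}\langle\vec{x}_2\rangle$ for some $n\in\mathbb{Z}$. Replacing $L_2$ by $L_2(n\vec{\omega})$ does not alter its class in $\mathscr{L}/\langle\tau\rangle$, so without loss of generality $E_{L_1}\langle\vec{x}_1\rangle\cong E_{L_2}\langle\vec{x}_2\rangle$. Writing $L_i=\mathcal{O}(\vec{a}_i)$ and invoking Proposition \ref{extension bundle which are in the same orbit} with $\vec{z}=\vec{a}_2-\vec{a}_1$ yields two cases. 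Case (i) gives $(\overline{L_1},\vec{x}_1)=(\overline{L_2},\vec{x}_2)$, corresponding to $\mathrm{id}\in G$. Case (ii) gives $\vec{x}_2=l_j\vec{x}_j+\sum_{i\neq j}(p_i-2-l_i)\vec{x}_i$ and $\vec{a}_2-\vec{a}_1=\sum_{i\neq j}(l_i+1)\vec{x}_i-\vec{c}$, which matches the second component of $\sigma_j(L_1,\vec{x}_1)$ exactly; for the first component, a direct comparison gives
$$\Bigl(\sum_{i\neq j}(l_i+1)\vec{x}_i-\vec{c}\Bigr)-\Bigl(\sum_{i\neq j}l_i\vec{x}_i-\vec{x}_j\Bigr)=\sum_{i=1}^{3}\vec{x}_i-\vec{c}=-\vec{\omega},$$
so $L_2$ and the first component of $\sigma_j(L_1,\vec{x}_1)$ differ by $\tau^{-1}$ and therefore coincide in $\mathscr{L}/\langle\tau\rangle$. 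Thus $(\overline{L_2},\vec{x}_2)=\sigma_j(\overline{L_1},\vec{x}_1)$, as required.

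Once the fibers of $\pi'$ are identified with $G$-orbits of cardinality $4$, the counting is immediate:
$$|\mathcal{V}_2/\langle\tau\rangle|=\frac{|\mathscr{L}/\langle\tau\rangle\times\mathbb{L}'|}{|G|}=\frac{1}{4}\cdot|\mathscr{L}/\langle\tau\rangle|\cdot|\mathbb{L}'|=\frac{1}{4}\Bigl(1-\sum_{i=1}^{3}\frac{1}{p_i}\Bigr)\prod_{i=1}^{3}p_i(p_i-1),$$
where $|\mathscr{L}/\langle\tau\rangle|$ comes from \eqref{gongshi2.7} and $|\mathbb{L}'|=\prod_{i=1}^{3}(p_i-1)$ is clear from the normal-form description.

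The main obstacle is the bookkeeping in the reverse direction of the fiber identification: one must carefully translate the two alternatives of Proposition \ref{extension bundle which are in the same orbit} into the group-theoretic language of $G$, and in particular recognize that the seemingly different line-bundle twist appearing in case (ii) differs from the prescription of $\sigma_j$ by exactly $-\vec{\omega}$, which is invisible in the quotient $\mathscr{L}/\langle\tau\rangle$. The non-tubular assumption $\delta(\vec{\omega})\neq 0$ enters implicitly through \eqref{gongshi2.7} (so that $|\mathscr{L}/\langle\tau\rangle|$ is finite) and explicitly through the freeness of the $G$-action on $\mathscr{L}/\langle\tau\rangle\times\mathbb{L}'$.
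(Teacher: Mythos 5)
Your proposal is correct and follows essentially the same route as the paper: identify the fibers of $\pi'$ with the $G$-orbits via Proposition \ref{extension bundle which are in the same orbit} and Lemma \ref{CommutativeDiagram}, use freeness to see each orbit has four elements, and divide $|\mathscr{L}/\langle\tau\rangle|\cdot|\mathbb{L}'|$ by $4$ using \eqref{gongshi2.7}. If anything, you are more careful than the paper's one-line fiber identification, since you explicitly track the $-\vec{\omega}$ discrepancy between case (ii) of Proposition \ref{extension bundle which are in the same orbit} and the definition of $\sigma_j$, which is exactly what makes the identification valid only after passing to $\mathscr{L}/\langle\tau\rangle$.
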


\begin{proof}
For any extension bundle $E_{L}\langle\vec{x}\rangle$, assume $\pi(L',\vec{x}')=E_{L}\langle\vec{x}\rangle$, then by Proposition \ref{extension bundle which are in the same orbit} and Lemma \ref{CommutativeDiagram}, we have $(L',\vec{x}')=(L,\vec{x})$ or $\sigma_j(L,\vec{x})$ for some
$1\leq j\leq 3$. Consequently, $\pi'$ induces a bijection between the $G$-orbits of  $\mathscr{L}/{\langle\tau\rangle} \times \mathbb{L}'$ with $\mathcal{V}_2/{\langle\tau\rangle}.$

Then by \eqref{gongshi2.7} we have
\begin{align*}
    |\mathcal{V}_2/{\langle\tau\rangle}|
    =&[\mathscr{L}/{\langle\tau\rangle}\times \mathbb{L}']|/|G|\\
    =&\frac{1}{4}[\mathbb{L}:\mathbb{Z}\vec{\omega}]\prod\limits_{i=1}^{3}(p_i-1)\\
    =&\frac{1}{4}(1-\sum\limits_{i=1}^{3}\frac{1}{p_i})\prod\limits_{i=1}^{3}p_i(p_i-1).
\end{align*}
\end{proof}

\section{Invariants and stabilities for extension bundles}\label{SectionApp}
In this section, we will give some invariants for extension bundles and investigate their stabilities.

\subsection{Projective cover and injective hull}

Denote by $\mathfrak{P}(E)$ (resp. $\mathfrak{I}(E)$) the projective
cover (resp. injective hull) of a vector bundle $E$. The following result was conjectured by Professor Helmut Lenzing during private communication with the second-named author, indicating that the projective cover and injective hull are both invariants for extension bundles.

\begin{prop}\label{projective cover determines extension bundle}
Assume that $E$ and $F$ are extension bundles in $\textup{vect}\mbox{-}\mathbb{X}$, then the following are equivalent.

(1) $E\cong F$;

(2) $\mathfrak{P}(E)\cong \mathfrak{P}(F)$;

(3) $\mathfrak{I}(E)\cong \mathfrak{I}(F)$.
\end{prop}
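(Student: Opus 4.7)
The implications $(1) \Rightarrow (2)$ and $(1) \Rightarrow (3)$ are immediate, since the projective cover and the injective hull in the Frobenius exact category $\textup{vect}\mbox{-}\mathbb{X}$ are determined up to isomorphism by the object. The content of the proposition lies entirely in the converse directions $(2) \Rightarrow (1)$ and $(3) \Rightarrow (1)$.

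For $(2) \Rightarrow (1)$, the plan is to give a closed-form description of $\mathfrak{P}(E_L\langle\vec{x}\rangle)$ as a direct sum of line bundles whose summands can be read off from the pair $(L, \vec{x})$. The starting point is the defining sequence
\[
0 \to L(\vec{\omega}) \to E_L\langle\vec{x}\rangle \to L(\vec{x}) \to 0,
\]
which exhibits $L(\vec{x})$ as a quotient line bundle of $E_L\langle\vec{x}\rangle$. By Proposition \ref{extension bundle which are in the same orbit}, the same extension bundle admits, for each $j \in \{1,2,3\}$, an alternative presentation of the form $E_{L'}\langle\vec{y}\rangle(\vec{z})$, which produces another quotient line bundle $L'(\vec{y}+\vec{z})$. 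The natural candidate for $\mathfrak{P}(E_L\langle\vec{x}\rangle)$ is then the direct sum of the (up to four) distinct quotient line bundles arising from the elements of the Klein four-group $\{\textup{id}, \sigma_1, \sigma_2, \sigma_3\}$ acting on $(L,\vec{x})$. I would verify that the resulting map is an admissible epimorphism and is minimal, the latter by a $\Hom$-vanishing computation using the long exact sequence attached to the defining extension.

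Once this formula is established, the implication $(2) \Rightarrow (1)$ is a direct consequence of Proposition \ref{extension bundle which are in the same orbit}: if $\mathfrak{P}(E_L\langle\vec{x}\rangle) \cong \mathfrak{P}(E_{L'}\langle\vec{y}\rangle)$, then the multisets of line-bundle summands coincide, which forces $(L',\vec{y})$ to lie in the Klein four-group orbit of $(L,\vec{x})$, and hence $E_L\langle\vec{x}\rangle \cong E_{L'}\langle\vec{y}\rangle$. The implication $(3) \Rightarrow (1)$ is dual: $\mathfrak{I}(E_L\langle\vec{x}\rangle)$ is the direct sum of the corresponding sub line bundles $L(\vec{\omega}+\vec{z})$ arising from the four presentations, and the argument carries over verbatim. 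Alternatively, Serre duality on $\textup{coh}\mbox{-}\mathbb{X}$ provides a natural isomorphism $\mathfrak{I}(E) \cong \mathfrak{P}(\tau E)$, so $(3) \Rightarrow (1)$ can be deduced from $(2) \Rightarrow (1)$ applied to $\tau E$.

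The main technical obstacle is the explicit computation of $\mathfrak{P}(E_L\langle\vec{x}\rangle)$ together with the verification of minimality. The required $\Hom$-computations rely on the normal form \eqref{normal form} in $\mathbb{L}$ and on careful tracking of the degenerate cases in which two of the four candidate summands coincide; as in the proof of Theorem \ref{exact number of orbits}, this collapse occurs precisely when some $\sigma_j$ fixes $(L,\vec{x})$. Once these technicalities are handled, the reconstruction of $E$ from either invariant follows directly from the orbit description in Proposition \ref{extension bundle which are in the same orbit}.
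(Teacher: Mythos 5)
Your overall strategy is the same as the paper's: decompose $\mathfrak{P}(E_L\langle\vec{x}\rangle)$ into four explicit line bundle summands and feed the comparison of summands into Proposition \ref{extension bundle which are in the same orbit}. However, there are two genuine problems. First, you have the roles of sub- and quotient line bundles reversed. A nonzero map from a line bundle into a rank-two bundle is a monomorphism, so the indecomposable summands of the projective cover are \emph{sub} line bundles of $E$; by \cite[Theorem 4.6]{[KLM]} (which the paper simply quotes rather than rederives) one has $\mathfrak{P}(E\langle\vec{x}\rangle)=\mathcal{O}(\vec{\omega})\oplus\bigoplus_{i=1}^{3}\mathcal{O}(\vec{x}-(l_i+1)\vec{x}_i)$, i.e.\ the sub line bundles $\mathcal{O}(\vec{\omega}+\vec{z})$ coming from the four presentations, while the quotient line bundles $\mathcal{O}(\vec{y}+\vec{z})$ assemble into the injective hull. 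Your proposed formula computes $\mathfrak{I}$, not $\mathfrak{P}$. Since your treatment of (2) and (3) is symmetric the skeleton survives the swap, but the formula you would establish is for the wrong invariant. Relatedly, your proposed shortcut $\mathfrak{I}(E)\cong\mathfrak{P}(\tau E)$ is false: already for an Auslander bundle, $\mathfrak{P}(\tau E)$ contains $\mathcal{O}(2\vec{\omega})$ whereas $\mathfrak{I}(E)$ contains $\mathcal{O}$, and these differ unless $2\vec{\omega}=0$. (A correct duality would use $E\mapsto E^{\vee}$, which genuinely exchanges subs and quotients.)

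Second, and more seriously, the step ``the multisets of line-bundle summands coincide, which forces $(L',\vec{y})$ to lie in the Klein four-group orbit of $(L,\vec{x})$'' is exactly the content that needs proof, and it does not follow formally from Proposition \ref{extension bundle which are in the same orbit}, which characterizes when two extension bundles are isomorphic, not when two multisets of four line bundles agree. A priori two extension bundles in different orbits could have accidentally coinciding projective covers, because the matching of the four summands of $\mathfrak{P}(E)$ with the four summands of $\mathfrak{P}(F)$ is not given to you. The paper's proof spends essentially all of its effort ruling this out: it asks which summand of $\mathfrak{P}(E)$ the summand $\mathcal{O}(\vec{\omega}+\vec{z})$ of $\mathfrak{P}(F)$ can equal, splits into the two resulting cases, and in the second case pins down $\vec{z}$, then uses the remaining three summands (with a normal-form coefficient comparison to show the matching must send index $j$ to index $j$) to recover $\vec{y}=l_j\vec{x}_j+\sum_{i\neq j}(p_i-2-l_i)\vec{x}_i$, at which point Proposition \ref{extension bundle which are in the same orbit} applies. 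You flag ``degenerate cases in which two of the four candidate summands coincide'' as the main technicality, but in fact the four summands of $\mathfrak{P}$ are always pairwise distinct (the normal forms $\vec{\omega}$ and $\vec{x}-(l_i+1)\vec{x}_i$ cannot collide for $0\leq l_i\leq p_i-2$); the real work is the injectivity of the assignment ``orbit $\mapsto$ multiset of summands,'' which your write-up asserts rather than proves.
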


\begin{proof}
We only show that the statements (1) and (2) are equivalent, the equivalence of (1) and (3) is quite similar.

If (1) holds, then obviously (2) is true. On the other hand, we assume that $E$ and $F$ share the same projective covers. We will show that $E\cong F$. Without loss of generality by the action of line bundle twist, we assume that $E=E\langle\vec{x}\rangle$ and $F=E\langle \vec{y}\rangle(\vec{z})$, where $\vec{x}=\sum\limits_{i=1}^{3}a_i\vec{x}_i$, $\vec{y}=\sum\limits_{i=1}^{3}b_i\vec{x}_i$ and $\vec{z}=\sum\limits_{i=1}^{3}l_i\vec{x}_i+l\vec{c}$ are in normal forms, and $0\leq a_i, b_i\leq p_i-2$ for any $1\leq i\leq 3$. By \cite[Theorem 4.6]{[KLM]}, the projective covers of $E$ and $F$ are given by
$$\mathfrak{P}(E)=\mathcal{O}(\vec{\omega})\oplus(\bigoplus\limits_{i=1}^{3}\mathcal{O}(\vec{x}-(a_i+1)\vec{x}_i))$$
and
$$\mathfrak{P}(F)=\mathcal{O}(\vec{\omega}+\vec{z})\oplus(\bigoplus\limits_{i=1}^{3}\mathcal{O}(\vec{y}+\vec{z}-(b_i+1)\vec{x}_i)).$$
Then the assumption $\mathfrak{P}(E)\cong \mathfrak{P}(F)$ implies that $\mathcal{O}(\vec{\omega}+\vec{z})$ is a direct summand of $\mathfrak{P}(E)$. There are two cases to consider:
\begin{itemize}
\item[\emph{Case 1:}] $\mathcal{O}(\vec{\omega}+\vec{z})=\mathcal{O}(\vec{\omega})$, it follows that $\vec{z}=0$. Then by comparing the other three direct summands of $\mathfrak{P}(E)$ and $\mathfrak{P}(F)$, and noticing that $0\leq a_i, b_i\leq p_i-2$ for any $1\leq i\leq 3$, it is easy to see that $a_i=b_i$ for any $1\leq i\leq 3$, that is, $\vec{x}=\vec{y}$. Hence $E\cong F$.
\item[\emph{Case 2:}] $\mathcal{O}(\vec{\omega}+\vec{z})=\mathcal{O}(\vec{x}-(a_j+1)\vec{x}_j)$ for some $1\leq j\leq 3$, then $\vec{\omega}+\vec{z}=\vec{x}-(a_j+1)\vec{x}_j$, that is, $\sum\limits_{i=1}^{3}(l_i-1)\vec{x}_i+(l+1)\vec{c}=\sum\limits_{i\neq j}a_i\vec{x}_i-\vec{x}_j$. Notice that for $1\leq i\leq 3$, $0\leq l_i\leq p_i-1$ and $0\leq a_i, b_i\leq p_i-2$. We get $l_j=0$, $l_i=a_i+1$ for $i\neq j$, and $l=-1$. Hence
\begin{equation}\label{expression of z}
\vec{z}=\sum\limits_{i\neq j}(a_i+1)\vec{x}_i-\vec{c}.
\end{equation}
By considering the direct summand $\mathcal{O}(\vec{\omega})$ of $\mathfrak{P}(E)$, we know that there exists some $1\leq k\leq 3$, such that
\begin{equation}\label{equation of the summand w}\mathcal{O}(\vec{\omega})=\mathcal{O}(\vec{y}+\vec{z}-(b_k+1)\vec{x}_k).\end{equation}
We claim that $k=j$. Otherwise, by comparing the coefficients of $\vec{x}_j$ for the determinants of both sides of (\ref{equation of the summand w}), we get $b_j\equiv p_j-1(\textup{mod}\;p_j)$, which is a contradiction to the assumption $0\leq b_j\leq p_j-2$, as claimed. Thus we have $\vec{\omega}=\vec{y}+\vec{z}-(b_j+1)\vec{x}_j$. By comparing the normal forms of both sides, we then obtain $b_i=p_i-2-a_i$ for any $i\neq j$.

Similarly, by considering the direct summand $\mathcal{O}(\vec{x}-(a_i+1)\vec{x}_i)$ of $\mathfrak{P}(E)$ for any $i\neq j$, we get $\mathcal{O}(\vec{x}-(a_i+1)\vec{x}_i)=\mathcal{O}(\vec{y}+\vec{z}-(b_k+1)\vec{x}_k)$ for some $k\neq j$. It follows that $\vec{x}-(a_i+1)\vec{x}_i=\vec{y}+\vec{z}-(b_k+1)\vec{x}_k$, where $i,k\neq j$. Then by comparing the coefficients of $\vec{x}_j$, we get $b_j=a_j$. Therefore, we finally obtain that
\begin{equation}\label{expression of new y}
\vec{y}=a_j\vec{x}_j+\sum\limits_{i\neq j}(p_i-2-a_i)\vec{x}_i.
\end{equation}
Combining with (\ref{expression of z}) and (\ref{expression of new y}), and according to Proposition \ref{extension bundle which are in the same orbit}, we have $E\langle \vec{x}\rangle\cong E\langle
\vec{y}\rangle(\vec{z})$, that is, $E\cong F$. We are done.
\end{itemize}
\end{proof}

\subsection{Semistable vector bundles}
In this subsection we describe all the stable and semistable vector bundles of rank two. Recall that the degree function $\textup{deg}$ is the linear form on $K_0(\mathbb{X})$ which is characterized by $\textup{deg}(\mathcal{O}(\vec{x}))=\delta(\vec{x})$. By \cite{Lenzing2011Weighted} each non-zero object $E$ has a well-defined slope in $\overline{\mathbb{Q}}=\mathbb{Q}\cup\{\infty\}$ given by $\mu(E):=\frac{\textup{deg}(E)}{\textup{rk}(E)}$. A non-zero object $E$ is called \emph{semistable} (resp. \emph{stable}) if $\mu(F)\leq \mu(E)$ (resp. $\mu(F)<\mu(E)$) for any non-trivial subobject $F$ of $E$.

\begin{prop}\label{semistable}
Let $L$ be a line bundle, and let $\vec{x}=\sum\limits_{i=1}^3l_i\vec{x}_i$ be an element from $\mathbb{L}$ with $0\leq \vec{x} \leq \sum\limits_{i=1}^{3}(p_i-2)\vec{x}_i$. Then $E_L\langle\vec{x}\rangle$ is semistable if and only if $$\delta(\vec{\omega})\leq \delta(\vec{x})\leq \delta(\vec{\omega}+2(l_i+1)\vec{x}_i),\; \text{\;for\;any\;} 1\leq i\leq 3.$$ Moreover, it is stable if and only if both strict inequalities hold.
\end{prop}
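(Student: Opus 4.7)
The plan is to characterize semistability by comparing the slope $\mu(E_L\langle\vec{x}\rangle)=\tfrac{1}{2}(\delta(\vec{\omega})+\delta(\vec{x}))$ against the degrees of the four distinguished line subbundles that appear as direct summands of the projective cover. Since slopes and subobject structure are invariant under line bundle twist, I would first reduce to the case $L=\co$. The first condition $\delta(\vec{\omega})\leq\delta(\vec{x})$ is then equivalent to $\delta(\vec{\omega})\leq\mu$, while each of the three conditions $\delta(\vec{x})\leq\delta(\vec{\omega}+2(l_i+1)\vec{x}_i)$ is, by direct rearrangement, equivalent to $\delta(\vec{x}-(l_i+1)\vec{x}_i)\leq\mu$. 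Thus it suffices to show that these four slope inequalities characterize semistability.

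The main input is the projective cover formula from \cite[Theorem 4.6]{[KLM]},
$$\mathfrak{P}(E\langle\vec{x}\rangle)=\co(\vec{\omega})\oplus\bigoplus_{i=1}^{3}\co(\vec{x}-(l_i+1)\vec{x}_i),$$
together with the fact that all line bundles are projective-injective in the distinguished Frobenius structure on $\vect\bbX$. Given any non-zero morphism $\co(\vec{y})\to E\langle\vec{x}\rangle$, projectivity of $\co(\vec{y})$ lifts it along the deflation $\mathfrak{P}(E\langle\vec{x}\rangle)\to E\langle\vec{x}\rangle$; at least one component of this lift is non-zero, yielding a non-zero map from $\co(\vec{y})$ to $\co(\vec{\omega})$ or to $\co(\vec{x}-(l_i+1)\vec{x}_i)$ for some $i$. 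Since a non-zero map between line bundles exists iff the source lies below the target in $\bbL$, this forces $\vec{y}\leq\vec{\omega}$ or $\vec{y}\leq\vec{x}-(l_i+1)\vec{x}_i$, so $\delta(\vec{y})$ is bounded by the $\delta$-value of one of the four distinguished summands.

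For sufficiency, assume the four inequalities hold and let $F\subsetneq E\langle\vec{x}\rangle$ be a non-zero subobject in $\coh\bbX$. Since $E\langle\vec{x}\rangle$ is torsion-free, so is $F$. If $\textup{rk}(F)=2$ then the cokernel is a non-zero torsion sheaf of strictly positive degree, and $\mu(F)<\mu(E\langle\vec{x}\rangle)$ holds automatically. If $\textup{rk}(F)=1$ then $F\cong\co(\vec{y})$ for some $\vec{y}\in\bbL$, and the lifting argument combined with the four assumed inequalities gives $\delta(\vec{y})\leq\mu(E\langle\vec{x}\rangle)$. For necessity, each summand of $\mathfrak{P}(E\langle\vec{x}\rangle)$ admits a non-zero map into $E\langle\vec{x}\rangle$, which is automatically injective in $\coh\bbX$ because its source is a line bundle and its target is torsion-free; applying the semistability hypothesis to these four line subobjects yields the four stated inequalities. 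The stable case runs through the same argument with strict inequalities throughout, using that a proper rank-$2$ subsheaf already has strictly smaller slope.

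The principal technical point is the projectivity of line bundles in the distinguished exact structure and the corresponding lifting property through the projective cover, both of which are standard consequences of the setup in \cite{[GL],[KLM]}. Once this is granted, the characterization of semistability reduces to a purely formal comparison of four degrees against the slope $\mu(E\langle\vec{x}\rangle)$.
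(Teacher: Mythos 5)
Your proposal is correct and follows essentially the same route as the paper: reduce to comparing the slope of $E_L\langle\vec{x}\rangle$ with the degrees of the four line-bundle summands of its projective cover from \cite[Theorem 4.6]{[KLM]}, handling rank-two subsheaves via the positive-degree torsion cokernel and rank-one subsheaves via lifting through the projective cover. The only cosmetic difference is that the paper phrases the final degree comparison through the two defining short exact sequences and the seesaw property, whereas you rearrange directly against $\mu=\tfrac12(\delta(\vec{\omega})+\delta(\vec{x}))$; these are equivalent.
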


\begin{proof}
Let $F$ be a non-trivial subobject of $E_L\langle\vec{x}\rangle$ and $\iota: F\hookrightarrow E_L\langle\vec{x}\rangle$ be the natural embedding. We know that $1\leq \textup{rk}(F)\leq 2$.

\emph{Case 1:} $\textup{rk}(F)=2$; then $\textup{Coker}(\iota)$ has rank zero, hence it is a torsion sheaf and $\textup{deg}(\textup{Coker}(\iota))>0$. It follows that $\textup{deg}(F)<\textup{deg}(E_L\langle \vec{x}\rangle)$, and hence $\mu(F)<\mu(E_L\langle \vec{x}\rangle)$.

\emph{Case 2:} $\textup{rk}(F)=1$; then $F$ is a line bundle, hence $\iota$ factors through the projective cover $\mathfrak{P}(E_L\langle \vec{x}\rangle)$ of $E_L\langle \vec{x}\rangle$. Hence the slope of $F$ is smaller than or equal to the slope of a direct summand of $\mathfrak{P}(E_L\langle\vec{x}\rangle)$.

Therefore, $E_L\langle\vec{x}\rangle$ is semistable if and only if $\mu(L_i)\leq \mu(E_L\langle \vec{x}\rangle)$ for each indecomposable direct summand $L_i$ of $\mathfrak{P}(E_L\langle\vec{x}\rangle)$. Recall that
$$\mathfrak{P}(E_L\langle \vec{x}\rangle)=L(\vec{\omega})\oplus(\bigoplus\limits_{i=1}^{3}L(\vec{x}-(l_i+1)\vec{x}_i))$$ and there are exact sequences
$$0\to L(\vec{\omega})\to E_L\langle \vec{x}\rangle\to L(\vec{x})\to 0$$
and
$$0\to L(\vec{x}-(l_i+1)\vec{x}_i))\to E_L\langle \vec{x}\rangle\to L(\vec{\omega}+(l_i+1)\vec{x}_i))\to 0.$$
Hence by seesaw property we get $E_L\langle\vec{x}\rangle$ is semistable if and only if $\mu(L(\vec{\omega}))\leq \mu(L(\vec{x}))$ and $\mu(L(\vec{x}-(l_i+1)\vec{x}_i))\leq \mu(L(\vec{\omega}+(l_i+1)\vec{x}_i))$, or equivalently $\delta(\vec{\omega})\leq \delta(\vec{x}) \leq \delta(\vec{\omega}+2(l_i+1)\vec{x}_i)$, for any $1\leq i\leq 3$.

Similarly, $E_L\langle \vec{x}\rangle$ is stable if and only if $\delta(\vec{\omega})< \delta(\vec{x}) < \delta(\vec{\omega}+2(l_i+1)\vec{x}_i)$ for any $1\leq i\leq 3$. This finishes the proof.
\end{proof}

As an immediate consequence, we can obtain the following well-known result, c.f. \cite{Lenzing2011Weighted}.
\begin{cor} Let $\mathbb{X}$ be a weighted projective line. The following hold:
\begin{itemize}
  \item [(i)] if $\delta(\vec{\omega})<0$, then each extension bundle is stable;
  \item [(ii)] if $\delta(\vec{\omega})=0$, then each extension bundle is semistable;
  \item [(iii)] if $\delta(\vec{\omega})>0$, then there exists non-semistable extension bundle.
\end{itemize}
\end{cor}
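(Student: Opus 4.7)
The plan is to apply Proposition~\ref{semistable} to an arbitrary extension bundle $E_L\langle\vec{x}\rangle$ with $\vec{x}=\sum_{i=1}^{3}l_i\vec{x}_i$ and $0\le l_i\le p_i-2$, and verify the inequalities $\delta(\vec{\omega})\le\delta(\vec{x})\le\delta(\vec{\omega}+2(l_i+1)\vec{x}_i)$ for each $i$, with the strict versions for stability, under each hypothesis on the sign of $\delta(\vec{\omega})$.

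First, the left inequality is easy in (i) and (ii): since each $l_j\ge 0$ and $\delta(\vec{x}_j)=p/p_j>0$, one has $\delta(\vec{x})\ge 0$, so $\delta(\vec{\omega})\le 0$ (respectively $<0$) yields the non-strict (respectively strict) version for free.

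The substance of the argument is the right inequality. Using $\vec{\omega}=\vec{c}-\sum_j\vec{x}_j$ and $\vec{c}=p_i\vec{x}_i$, a direct computation gives
\[
\delta\bigl(\vec{\omega}+2(l_i+1)\vec{x}_i\bigr)-\delta(\vec{x}) = p\left[1+\frac{l_i+1}{p_i}-\sum_{j\ne i}\frac{l_j+1}{p_j}\right].
\]
From $l_j\le p_j-2$ one gets $(l_j+1)/p_j\le 1-1/p_j$ for $j\ne i$, and trivially $(l_i+1)/p_i\ge 1/p_i$. Substituting these, the bracket is at least $\sum_{j=1}^{3}1/p_j-1$, so the whole expression is bounded below by $-\delta(\vec{\omega})$. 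This yields the non-strict right inequality when $\delta(\vec{\omega})\le 0$ and the strict one when $\delta(\vec{\omega})<0$, finishing both (i) and (ii).

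For (iii) it suffices to exhibit a single non-semistable extension bundle when $\delta(\vec{\omega})>0$; the Auslander bundle $E\langle 0\rangle$ works, since $\vec{x}=0$ gives $\delta(\vec{x})=0<\delta(\vec{\omega})$, violating the left inequality of Proposition~\ref{semistable}. The main conceptual step---the only one I would flag as a (minor) obstacle---is noticing that combining the natural bounds on the $(l_j+1)/p_j$'s returns precisely $-\delta(\vec{\omega})$; once this is spotted, all three regimes follow uniformly and the remainder is bookkeeping.
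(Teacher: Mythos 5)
Your proposal is correct and follows essentially the same route as the paper: both apply Proposition~\ref{semistable} and verify the right-hand inequality using the bounds $l_j\le p_j-2$, your computation of $\delta(\vec{\omega}+2(l_i+1)\vec{x}_i)-\delta(\vec{x})\ge -\delta(\vec{\omega})$ being just the $\delta$-image of the paper's inequality $(\vec{\omega}+2(l_i+1)\vec{x}_i)-\vec{x}\ge -\vec{\omega}+l_i\vec{x}_i$ in $\mathbb{L}$. You additionally spell out the left inequality and the explicit non-semistable Auslander bundle for (iii), which the paper leaves to the reader as "follows directly".
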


\begin{proof}
Let $E_L\langle\vec{x}\rangle$ be an extension bundle with $\vec{x}=\sum\limits_{i=1}^3l_i\vec{x}_i$, where $0\leq i\leq p_i-2$.
Note that for any $1\leq i\leq 3$,
\begin{align*}(\vec{\omega}+2(l_i+1)\vec{x}_i)-\vec{x}&=\vec{c}-\sum\limits_{1\leq i\leq 3}\vec{x}_i+(l_i+2)\vec{x}_i-\sum\limits_{j\neq i}l_j\vec{x}_j\\
&=\vec{c}+(l_i+1)\vec{x}_i-\sum\limits_{j\neq i}(l_j+1)\vec{x}_j\\
&\geq\vec{c}+(l_i+1)\vec{x}_i-\sum\limits_{j\neq i}(p_j-1)\vec{x}_j\\
&=-\vec{\omega}+l_i\vec{x}_i.
\end{align*}
Then the results follow from Proposition \ref{semistable} directly.
\end{proof}

\begin{exm}
Let $\mathbb{X}$ be a weighted projective line of tubular type $(p_1, p_2, p_3)$. Let $E_L\langle\vec{x}\rangle$ be an extension bundle with $0\leq \vec{x}=\sum\limits_{i=1}^3l_i\vec{x}_i \leq \sum\limits_{i=1}^{3}(p_i-2)\vec{x}_i$. The following statements hold:
\begin{itemize}
  \item [(i)] if $\mathbb{X}$ has weight type $(3,3,3)$, then $0\leq\vec{x}\leq \vec{x}_1+\vec{x}_2+\vec{x}_3$, and $E_L\langle\vec{x}\rangle$ is stable if and only if  $\sum\limits_{i=1}^3l_i=1$ or 3;
   \item [(ii)] if $\mathbb{X}$ has weight type $(2,3,6)$, then $0\leq\vec{x}\leq \vec{x}_2+4\vec{x}_3$, and $E_L\langle\vec{x}\rangle$ is stable if and only if  $l_3\neq 0$ or 4;
   \item [(ii)] if $\mathbb{X}$ has weight type $(2,4,4)$, then $0\leq \vec{x}\leq 2\vec{x}_2+2\vec{x}_3$, and $E_L\langle\vec{x} \rangle$ is stable if and only if $l_i=1$ for $i=2$ or 3.
\end{itemize}
Combining with Corollary \ref{Auslander bundle}, we know that an extension bundle is stable if and only if it is not an Auslander bundle.
\end{exm}

\section{Tilting objects in $\underline{\textup{vect}}\mbox{-}\mathbb{X}$ consisting of extension bundles}\label{Section4}

Kussin, Lenzing, and Meltzer in \cite{[KLM]} showed that the category $\textup{vect}\mbox{-}\mathbb{X}$ is a Forbenius category with the system $\mathscr{L}$ of all line bundles as the indecomposable projective-injective objects, then the induced stable category $\underline{\textup{vect}}\mbox{-}\mathbb{X}$ is a triangulated category by a general result of Happel \cite{[H]}.

In this section, we investigate the tilting objects in the triangulated category $\underline{\textup{vect}}\mbox{-}\mathbb{X}$. For weight type $(2, p, q)$, we construct a tilting object in $\underline{\textup{vect}}\mbox{-}\mathbb{X}$ consisting only of Auslander bundles, which gives a positive answer to Question \ref{bB}. As its application, we generalize the result of Theorem 4.5 in \cite{Lenzing2011Weighted}.

\subsection{A useful distinguished triangle}

In this subsection we generalize a distinguished triangle which will be useful in the proof of the construction of tilting objects. First we collect some basic properties of the triangulated category $\underline{\textup{vect}}\mbox{-}\mathbb{X}$ for weight type $(p_1,p_2,p_3)$ from \cite{[KLM]}. Keeping the notation in \cite{[KLM]}, we denote by $\underline{\textup{Hom}}(X,Y)$ the homomorphism space between $X$ and $Y$ in $\underline{\textup{vect}}\mbox{-}\mathbb{X}$, and set $$\bar{x}_j=\vec{\omega}+\vec{x}_j=\vec{c}-\sum\limits_{i\neq j}\vec{x}_i \text{\quad for\quad} 1\leq j\leq3.$$

\begin{lem}\label{BasicPropInKLM}\textup{\cite[Corollaries 4.14 and B.2]{[KLM]}}
Let $\mathbb{X}$ be a weighted projective line of weight type $(p_1,p_2,p_3)$. Let $E$ be an Auslander bundle and $\vec{x}\in\mathbb{L}$. Then
\begin{itemize}
\item[(1)] $\underline{\textup{Hom}}(E,E(\vec{x}))\neq 0$ if and only if $\vec{x}\in \{0, \bar{x}_1, \bar{x}_2, \bar{x}_3\}$, and in this case $\underline{\textup{Hom}}(E, E(\vec{x}))$ is isomorphic to $\bf{k}$.
\item[(2)] The two-fold suspension $[2]$ of $\underline{\textup{vect}}\mbox{-}\mathbb{X}$ is equivalent to the line bundle twist $X\mapsto X(\vec{c})$ by the canonical element.
\end{itemize}
\end{lem}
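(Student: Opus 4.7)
The plan is to reduce both parts to computations in $\textup{coh}\mbox{-}\mathbb{X}$ using the defining short exact sequence of an extension bundle, and then pass to the stable category by excising morphisms that factor through a line bundle.

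For part (1), I would compute $\textup{Hom}(E, E(\vec{x}))$ in $\textup{coh}\mbox{-}\mathbb{X}$ by applying $\textup{Hom}(E, -)$ to the defining sequence
$$0 \to \mathcal{O}(\vec{\omega}+\vec{x}) \to E(\vec{x}) \to \mathcal{O}(\vec{x}) \to 0.$$
Each resulting term $\textup{Hom}(E, \mathcal{O}(\vec{y}))$ and $\textup{Ext}^1(E, \mathcal{O}(\vec{y}))$ is reduced further by applying $\textup{Hom}(-, \mathcal{O}(\vec{y}))$ to the defining sequence $0\to \mathcal{O}(\vec{\omega})\to E\to \mathcal{O}\to 0$, together with Serre duality on the Ext terms, so that the problem collapses to hom spaces between line bundles whose dimensions are controlled by the partial order on $\mathbb{L}$. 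Next, the subspace of $\textup{Hom}(E, E(\vec{x}))$ consisting of stably trivial maps is the image of $\bigoplus_{L\in\mathscr{L}} \textup{Hom}(E, L) \otimes \textup{Hom}(L, E(\vec{x}))$; a careful count, case-by-case in $\vec{x}$, should pinpoint that the quotient is one-dimensional precisely when $\vec{x} \in \{0, \bar{x}_1, \bar{x}_2, \bar{x}_3\}$. The identity accounts for $\vec{x}=0$, while the three $\bar{x}_j$ classes correspond to ``twist'' morphisms obtained from compatible pairs of nonzero maps $\mathcal{O}(\vec{\omega}) \to \mathcal{O}(\vec{\omega}+\bar{x}_j)$ and $\mathcal{O} \to \mathcal{O}(\bar{x}_j)$ via the extension functoriality.

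For part (2), the idea is to identify the two-fold cosyzygy of every object with its twist by $\vec{c}$. I would first compute the injective hull of an Auslander bundle $E$; dually to the projective cover formula recalled in Proposition \ref{projective cover determines extension bundle}, the object $\mathfrak{I}(E)$ decomposes as a direct sum of four line bundles. Extracting the cosyzygy $\Omega^{-1} E$ from the sequence $0 \to E \to \mathfrak{I}(E) \to \Omega^{-1} E \to 0$ and iterating once more produces $\Omega^{-2} E$, which one verifies directly to be isomorphic to $E(\vec{c})$. Since line bundles vanish in $\underline{\textup{vect}}\mbox{-}\mathbb{X}$ and every rank-two indecomposable is an extension bundle, the isomorphism extends to a functor equivalence once one checks naturality, which amounts to compatibility of the chosen injective resolutions with morphisms up to stable homotopy.

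The main obstacle, I anticipate, will be the bookkeeping in part (1): tracking dimensions through the nested long exact sequences and identifying precisely which elements of $\textup{Hom}(E, E(\vec{x}))$ are images of composites through line bundles requires care, and the Serre duality identifications in the Ext terms create several opportunities to double-count. For part (2), the technical subtlety lies in promoting the pointwise equivalence $E[2] \cong E(\vec{c})$ to a natural equivalence of triangulated functors on $\underline{\textup{vect}}\mbox{-}\mathbb{X}$.
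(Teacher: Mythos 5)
First, note that the paper does not prove this lemma at all: it is quoted verbatim from \cite[Corollaries 4.14 and B.2]{[KLM]}, so there is no internal proof to match your argument against; what follows compares your sketch with what is actually needed (and with how the cited source argues).

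For part (1) your overall strategy --- compute $\textup{Hom}(E,E(\vec{x}))$ in $\textup{coh}\mbox{-}\mathbb{X}$ from the defining sequences and Serre duality, then divide out the maps factoring through line bundles --- is viable and is essentially the route taken in \cite{[KLM]}. However, your description of the surviving morphisms is wrong, and this matters because it is exactly the picture that is supposed to guide your deferred ``careful count''. Writing $\bar{x}_j=\vec{\omega}+\vec{x}_j$ in normal form gives $\bar{x}_j=\sum_{i\neq j}(p_i-1)\vec{x}_i-\vec{c}$, so $\bar{x}_j\not\geq 0$ and hence $\textup{Hom}(\mathcal{O},\mathcal{O}(\bar{x}_j))=0=\textup{Hom}(\mathcal{O}(\vec{\omega}),\mathcal{O}(\vec{\omega}+\bar{x}_j))$: the ``compatible pairs of nonzero maps'' you invoke do not exist. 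The nonzero stable map $E\to E(\bar{x}_j)$ is off-diagonal with respect to the filtrations; it is detected by the embedding $\textup{Hom}(E,E(\bar{x}_j))\hookrightarrow\textup{Hom}(\mathcal{O}(\vec{\omega}),E(\bar{x}_j))\cong\textup{Hom}(\mathcal{O},\mathcal{O}(\vec{x}_j))\cong\mathbf{k}$, i.e.\ it carries the subobject $\mathcal{O}(\vec{\omega})$ of $E$ nontrivially into the quotient $\mathcal{O}(\bar{x}_j)$ of $E(\bar{x}_j)$. With the correct picture the count can be completed, but as written your case analysis would be looking for the wrong maps.

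For part (2) there is a more serious gap. You propose to verify $\Omega^{-2}E\cong E(\vec{c})$ for Auslander (or extension) bundles and then ``extend'' using the fact that every rank-two indecomposable is an extension bundle. But the assertion is about the suspension functor on all of $\underline{\textup{vect}}\mbox{-}\mathbb{X}$, which contains indecomposable bundles of every rank $\geq 2$; checking the isomorphism on rank-two objects establishes neither the objectwise statement for higher-rank bundles nor, more importantly, the existence of a natural isomorphism of functors $[2]\cong(\vec{c})$ --- and you acknowledge this last point only to set it aside, although it is the entire content of the statement. The cited source does not argue objectwise at all: Corollary B.2 of \cite{[KLM]} is obtained from the identification of $\underline{\textup{vect}}\mbox{-}\mathbb{X}$ with the stable category of $\mathbb{L}$-graded Cohen--Macaulay modules (equivalently, graded matrix factorizations) over the triangle singularity $x_1^{p_1}+x_2^{p_2}+x_3^{p_3}$, where the two-fold shift is the degree twist by $\deg f=\vec{c}$ by Eisenbud's periodicity theorem; naturality comes for free from that structural description. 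Without some such global mechanism (or at least a naturally defined comparison map $\Omega^{-2}X\to X(\vec{c})$ for arbitrary $X$, shown to be an isomorphism), your argument does not prove part (2).
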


Using Proposition \ref{extension bundle which are in the same orbit}, we give the following expression for extension bundles under the suspension functor [1].
\begin{prop}\label{special shift}
Let $\mathbb{X}$ be a weighted projective line of weight type $(p_1,p_2,p_3)$. Assume that $\vec{x}=\sum\limits_{i=1}^3l_i\vec{x}_i$ with $0\leq l_i \leq p_i-2$ for $1\leq i\leq 3$. Then for any $i$,
\begin{equation}\label{Shift[1]}
E\langle\vec{x}\rangle[1]=E\langle (p_i-2-l_i)\vec{x}_i+\sum\limits_{j\neq i}l_j\vec{x}_j\rangle((l_i+1)\vec{x}_i).\end{equation}
In particular, for any $1\leq i\leq 3$, we have
\begin{equation}\label{extension bundle shift to be auslander bundle}
E\langle(p_i-2)\vec{x}_i\rangle[1]=E((p_i-1)\vec{x}_i).
\end{equation}
\end{prop}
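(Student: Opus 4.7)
Equation (\ref{extension bundle shift to be auslander bundle}) is the specialization of (\ref{Shift[1]}) at $\vec{x}=(p_i-2)\vec{x}_i$ (with the same index $i$ on both sides), so the substantive task is to establish the general formula (\ref{Shift[1]}). My plan is to exhibit, for one fixed index $i$, a short exact sequence of vector bundles
$$0\longrightarrow E\langle\vec{x}\rangle\longrightarrow \mathfrak{I}\longrightarrow E\langle\vec{y}\rangle(\vec{z})\longrightarrow 0,$$
with $\mathfrak{I}$ a direct sum of line bundles and $(\vec{y},\vec{z})$ the pair prescribed by the statement, that is a conflation in the Frobenius exact structure on $\textup{vect}\mbox{-}\mathbb{X}$. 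Since the line bundles are precisely the indecomposable projective-injective objects and hence vanish in $\underline{\textup{vect}}\mbox{-}\mathbb{X}$, such a conflation will induce a distinguished triangle that identifies $E\langle\vec{x}\rangle[1]\cong E\langle\vec{y}\rangle(\vec{z})$ in the stable category.

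Concretely, the steps will be as follows. First, I would take $\mathfrak{I}$ to be the injective hull of $E\langle\vec{x}\rangle$, whose explicit description as a specific sum of line bundles comes from \cite[Theorem 4.6]{[KLM]} (or its injective analogue, used already in the proof of Proposition \ref{projective cover determines extension bundle}), together with the canonical inflation $E\langle\vec{x}\rangle\hookrightarrow\mathfrak{I}$. Second, I would compute the class $[\mathfrak{I}]-[E\langle\vec{x}\rangle]$ in $K_0(\mathbb{X})$ by expanding each summand in the canonical basis via Proposition \ref{Grothendieck group of line bundles in normal form}. Third, I would apply Proposition \ref{x+y=z+u} to verify that this class coincides with $[\mathcal{O}(\vec{\omega}+\vec{z})]+[\mathcal{O}(\vec{y}+\vec{z})] = [E\langle\vec{y}\rangle(\vec{z})]$, a comparison that reduces to a direct match of normal-form coefficients. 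Finally, since extension bundles are exceptional in $\textup{coh}\mbox{-}\mathbb{X}$ and thus determined up to isomorphism by their $K_0$-class, the cokernel must be isomorphic to $E\langle\vec{y}\rangle(\vec{z})$. Only a single value of $i$ needs to be treated: the three candidate expressions on the right-hand side of (\ref{Shift[1]}) are pairwise isomorphic by a direct application of Proposition \ref{extension bundle which are in the same orbit}, as a short check shows.

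The main obstacle will be verifying that the chosen embedding $E\langle\vec{x}\rangle\hookrightarrow \mathfrak{I}$ is admissible in the Frobenius structure, i.e., that its cokernel is a vector bundle rather than a sheaf with torsion part. Once this is granted, the remaining work is normal-form bookkeeping in $\mathbb{L}$, handled cleanly by the tools of Section \ref{section2}. A secondary subtlety is that the coefficient $l_i+1$ can take the boundary value $p_i-1$, which forces one to rewrite $\vec{y}+\vec{z}$ and $\vec{\omega}+\vec{z}$ in normal form with some care, leveraging the Klein four-group symmetry from the proof of Theorem \ref{exact number of orbits} to recognize the result as the claimed extension bundle.
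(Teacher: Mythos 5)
Your strategy is sound, but it follows a genuinely different route from the paper. The paper's proof is a two-line affair: it quotes the closed formula $E\langle\vec{x}\rangle[1]=E\langle 2\vec{\omega}+\vec{c}-\vec{x}\rangle(\vec{x}-\vec{\omega})$ from \cite[Corollary 4.8]{[KLM]}, rewrites the right-hand side in normal form as $E\langle\sum_{i}(p_i-2-l_i)\vec{x}_i\rangle(\sum_{i}(l_i+1)\vec{x}_i-\vec{c})$, and then converts this symmetric expression into the three asymmetric ones of \eqref{Shift[1]} via Proposition \ref{extension bundle which are in the same orbit}. You instead propose to re-derive the needed instance of that corollary from scratch, realizing $E\langle\vec{x}\rangle[1]$ as the cokernel of the inflation into the injective hull $\mathfrak{I}=\mathcal{O}(\vec{x})\oplus\bigoplus_{i}\mathcal{O}(\vec{\omega}+(l_i+1)\vec{x}_i)$ and identifying that cokernel by its class in $K_0(\mathbb{X})$; the bookkeeping does close up (one checks $\sum_{i}[\mathcal{O}(\vec{\omega}+(l_i+1)\vec{x}_i)]-[\mathcal{O}(\vec{\omega})]=[\mathcal{O}(\vec{x})]+[\mathcal{O}(\vec{\omega}+\vec{c})]$, which is precisely $[E\langle\vec{y}\rangle(\vec{z})]$ for the symmetric pair), and the final conversion via Proposition \ref{extension bundle which are in the same orbit} is common to both arguments. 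What your route buys is self-containedness, staying inside the $K_0$-toolkit of Section \ref{section2}; what it costs is length, plus two points you leave implicit. First, the admissibility you flag as the main obstacle is automatic: the injective hull is taken in the Frobenius exact structure, so the map $E\langle\vec{x}\rangle\to\mathfrak{I}$ is an inflation by definition and its cokernel is a vector bundle. Second, and more substantively, before you may invoke exceptionality to pin the cokernel down by its $K_0$-class you must know the cokernel \emph{is} an extension bundle: it is a rank-two bundle which is nonzero in $\underline{\textup{vect}}\mbox{-}\mathbb{X}$ (being the image of $E\langle\vec{x}\rangle$ under the autoequivalence $[1]$), hence cannot be a direct sum of two line bundles, hence is indecomposable of rank two and therefore an extension bundle by \cite{[KLM]}. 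With that sentence added, your argument is complete.
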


\begin{proof}
By \cite[Corollary 4.8]{[KLM]}, we get
$$E\langle\vec{x}\rangle[1]=E\langle2\vec{\omega}+\vec{c}-\vec{x}\rangle(\vec{x}-\vec{\omega})=E\langle\sum\limits_{i=1}^{3}(p_i-2-l_i)\vec{x}_i\rangle(\sum\limits_{i=1}^{3}(l_i+1)\vec{x}_i-\vec{c}),$$ which equals to $E\langle (p_i-2-l_i)\vec{x}_i+\sum\limits_{j\neq i}l_j\vec{x}_j\rangle((l_i+1)\vec{x}_i)$ for any $1\leq i\leq 3$ by Proposition \ref{extension bundle which are in the same orbit}. This finishes the proof.
\end{proof}

Note that when $p_1=2$, (\ref{Shift[1]}) implies $E\langle\vec{x}\rangle[1]=E\langle\vec{x}\rangle(\vec{x}_1)$.

\begin{lem}\label{important triangles lemma}\textup{\cite[Proposition 4.20]{[KLM]}}
Assume that $\vec{x}=\sum\limits_{i=1}^3l_i\vec{x}_i$ is in normal form and $0\leq\vec{x}<\vec{x}+\vec{x}_i\leq \sum\limits_{i=1}^3(p_i-2)\vec{x}_i$. Then there is a distinguished triangle in $\underline{\textup{vect}}\mbox{-}\mathbb{X}$
\begin{equation}\label{important triangles}
E\langle \vec{x}\rangle \to E\langle \vec{x}+\vec{x}_i\rangle \to E\langle\vec{x}-l_i\vec{x}_i\rangle((l_i+1)\vec{x}_i)\to E\langle \vec{x}\rangle[1].
\end{equation}
\end{lem}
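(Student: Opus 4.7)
The plan is to realize the triangle as the image under the quotient $\textup{vect}\mbox{-}\mathbb{X}\to\underline{\textup{vect}}\mbox{-}\mathbb{X}$ of a conflation in the Frobenius exact category $\textup{vect}\mbox{-}\mathbb{X}$. Specifically, I will construct a vector bundle $P$ fitting into two conflations: one with line-bundle kernel (making $P\cong E\langle\vec{x}+\vec{x}_i\rangle$ in the stable category), and another of the form
$$0\to E\langle\vec{x}\rangle\to P\to E\langle\vec{x}-l_i\vec{x}_i\rangle((l_i+1)\vec{x}_i)\to 0,$$
which will yield the desired triangle.

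To set this up, I first build a monomorphism $\beta\colon E\langle\vec{x}\rangle\hookrightarrow E\langle\vec{x}+\vec{x}_i\rangle$ with cokernel the exceptional simple sheaf $S_{i,l_i+1}\cong\mathcal{O}(\vec{x}+\vec{x}_i)/\mathcal{O}(\vec{x})$. Applying $\textup{Hom}(-,\mathcal{O}(\vec{\omega}))$ to $0\to\mathcal{O}(\vec{x})\to\mathcal{O}(\vec{x}+\vec{x}_i)\to S_{i,l_i+1}\to 0$ and using that $\textup{coh}\mbox{-}\mathbb{X}$ is hereditary, one obtains a surjection $\textup{Ext}^1(\mathcal{O}(\vec{x}+\vec{x}_i),\mathcal{O}(\vec{\omega}))\twoheadrightarrow\textup{Ext}^1(\mathcal{O}(\vec{x}),\mathcal{O}(\vec{\omega}))$; under the range constraint on $\vec{x}$, both spaces are one-dimensional (they parametrize the unique non-split extensions defining the respective extension bundles), so the pullback of the class of $E\langle\vec{x}+\vec{x}_i\rangle$ is a non-zero scalar multiple of that of $E\langle\vec{x}\rangle$. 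This furnishes a commutative ladder between the two defining extensions whose middle arrow is $\beta$; the snake lemma yields $\textup{Ker}\,\beta=0$ and $\textup{coker}\,\beta\cong S_{i,l_i+1}$. Analogously, the defining sequence of $E\langle\vec{x}-l_i\vec{x}_i\rangle((l_i+1)\vec{x}_i)$ reads
$$0\to\mathcal{O}(\vec{\omega}+(l_i+1)\vec{x}_i)\to E\langle\vec{x}-l_i\vec{x}_i\rangle((l_i+1)\vec{x}_i)\to\mathcal{O}(\vec{x}+\vec{x}_i)\to 0,$$
and composing its right map with $\mathcal{O}(\vec{x}+\vec{x}_i)\twoheadrightarrow S_{i,l_i+1}$ produces a second surjection onto $S_{i,l_i+1}$, whose kernel $K$ is an extension of $\mathcal{O}(\vec{x})$ by $\mathcal{O}(\vec{\omega}+(l_i+1)\vec{x}_i)$. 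By Serre duality, $\textup{Ext}^1(\mathcal{O}(\vec{x}),\mathcal{O}(\vec{\omega}+(l_i+1)\vec{x}_i))$ is dual to $\textup{Hom}(\mathcal{O},\mathcal{O}(\vec{x}-(l_i+1)\vec{x}_i))$, which vanishes since the normal form of $\vec{x}-(l_i+1)\vec{x}_i$ has $\vec{c}$-coefficient $-1$. Hence $K\cong\mathcal{O}(\vec{\omega}+(l_i+1)\vec{x}_i)\oplus\mathcal{O}(\vec{x})$.

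Setting $P=E\langle\vec{x}+\vec{x}_i\rangle\times_{S_{i,l_i+1}}E\langle\vec{x}-l_i\vec{x}_i\rangle((l_i+1)\vec{x}_i)$, the standard kernel-pullback identities give two conflations
$$0\to\mathcal{O}(\vec{\omega}+(l_i+1)\vec{x}_i)\oplus\mathcal{O}(\vec{x})\to P\to E\langle\vec{x}+\vec{x}_i\rangle\to 0,$$
$$0\to E\langle\vec{x}\rangle\to P\to E\langle\vec{x}-l_i\vec{x}_i\rangle((l_i+1)\vec{x}_i)\to 0.$$
The first shows $P$ is a vector bundle and that $P\cong E\langle\vec{x}+\vec{x}_i\rangle$ in $\underline{\textup{vect}}\mbox{-}\mathbb{X}$ (the kernel being projective-injective). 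The second, read as a conflation in $\textup{vect}\mbox{-}\mathbb{X}$, produces the stated triangle after substituting $E\langle\vec{x}+\vec{x}_i\rangle$ for $P$. The main technical obstacle is the first step: producing $\beta$ and pinning down its cokernel as $S_{i,l_i+1}$; this depends on the surjectivity of the pullback on $\textup{Ext}^1$ (hereditary property) and on the one-dimensionality of both $\textup{Ext}^1$-spaces involved.
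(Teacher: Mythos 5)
First, note that the paper offers no proof of this lemma: it is quoted verbatim from \cite[Proposition 4.20]{[KLM]}, so there is no internal argument to compare against and your attempt has to stand on its own.

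Your construction inside $\textup{coh}\mbox{-}\mathbb{X}$ is sound: the hereditary/one-dimensionality argument does produce a monomorphism $\beta\colon E\langle\vec{x}\rangle\hookrightarrow E\langle\vec{x}+\vec{x}_i\rangle$ with cokernel $S_{i,l_i+1}$, the Serre-duality computation showing $K\cong\mathcal{O}(\vec{\omega}+(l_i+1)\vec{x}_i)\oplus\mathcal{O}(\vec{x})$ is correct, and the pullback $P$ does sit in the two short exact sequences you write down. The genuine gap is the very last step, the passage to $\underline{\textup{vect}}\mbox{-}\mathbb{X}$. The triangulated structure there comes from the \emph{distinguished} exact structure of \cite{[KLM]}, in which a short exact sequence of vector bundles $0\to A\to B\to C\to0$ is a conflation only if $\textup{Hom}(L,B)\to\textup{Hom}(L,C)$ is surjective for every line bundle $L$; this is a proper subclass of all short exact sequences of vector bundles. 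You never verify this admissibility for either of your two sequences, and your parenthetical justification for the first one --- ``the kernel being projective-injective'' --- is exactly the kind of reasoning that fails here: the defining sequence $0\to\mathcal{O}(\vec{\omega})\to E\langle\vec{x}\rangle\to\mathcal{O}(\vec{x})\to0$ of an extension bundle is itself a short exact sequence of vector bundles with line-bundle (hence projective-injective) kernel, yet it does not split and is not a conflation. So ``kernel injective $\Rightarrow$ splits in the stable category'' is not available until you already know the sequence is admissible, which is the point at issue.

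Both assertions can be repaired, but each requires a real computation that is missing. For $0\to K\to P\to E\langle\vec{x}+\vec{x}_i\rangle\to0$ one should show directly that $\textup{Ext}^1_{\textup{coh}\mbox{-}\mathbb{X}}(E\langle\vec{x}+\vec{x}_i\rangle,K)=0$ (this is true: by Serre duality it reduces to showing $\textup{Hom}(\mathcal{O},E\langle\vec{x}+\vec{x}_i\rangle(\vec{\omega}-\vec{x}))=0$ and $\textup{Hom}(\mathcal{O},E\langle\vec{x}+\vec{x}_i\rangle(-(l_i+1)\vec{x}_i))=0$, the latter needing a nonvanishing connecting map, i.e.\ essentially the $\textup{Hom}(L,E\langle\vec{y}\rangle)$ computations of \cite[Section 4]{[KLM]}), after which the sequence splits outright in $\textup{coh}\mbox{-}\mathbb{X}$ and $P\cong K\oplus E\langle\vec{x}+\vec{x}_i\rangle$. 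For $0\to E\langle\vec{x}\rangle\to P\to E\langle\vec{x}-l_i\vec{x}_i\rangle((l_i+1)\vec{x}_i)\to0$ one must check the lifting condition against every line bundle $L$; since $P$ is a pullback over $S_{i,l_i+1}$, this amounts to showing that every composite $L\to E\langle\vec{x}-l_i\vec{x}_i\rangle((l_i+1)\vec{x}_i)\to S_{i,l_i+1}$ already factors through $E\langle\vec{x}+\vec{x}_i\rangle\to S_{i,l_i+1}$, again a statement about which maps from line bundles into extension bundles exist. Until these admissibility checks are supplied, the argument establishes the triangle only at the level of classes in $K_0$, not as a distinguished triangle in $\underline{\textup{vect}}\mbox{-}\mathbb{X}$.
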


More general, we have the following result:
\begin{lem}\label{important triangles lemma n2}
Assume that $\vec{x}=\sum\limits_{i=1}^3l_i\vec{x}_i$ is in normal form and $0\leq\vec{x}<\vec{x}+b\vec{x}_i\leq \sum\limits_{i=1}^3(p_i-2)\vec{x}_i$. Then there is a distinguished triangle in $\underline{\textup{vect}}\mbox{-}\mathbb{X}$
\begin{equation}\label{important triangles n2}
E\langle\vec{x}\rangle \to E\langle\vec{x}+b\vec{x}_i\rangle\to E\langle\vec{x}-l_i\vec{x}_i+(b-1)\vec{x}_i\rangle((l_i+1)\vec{x}_i)\to E\langle\vec{x}\rangle[1].
\end{equation}
\end{lem}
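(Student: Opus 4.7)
The plan is to proceed by induction on $b$. The base case $b=1$ is exactly Lemma \ref{important triangles lemma}.

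For the inductive step, fix $b\geq 2$ and assume the result holds for $b-1$. Writing $\vec{z}=\vec{x}+(b-1)\vec{x}_i$ (whose $i$-th coefficient in normal form is $l_i+b-1$), the inductive hypothesis applied to $\vec{x}$ yields
$$E\langle\vec{x}\rangle\xrightarrow{f}E\langle\vec{z}\rangle\to E\langle\vec{x}-l_i\vec{x}_i+(b-2)\vec{x}_i\rangle((l_i+1)\vec{x}_i)\to E\langle\vec{x}\rangle[1],$$
while Lemma \ref{important triangles lemma} applied to $\vec{z}$ yields
$$E\langle\vec{z}\rangle\xrightarrow{g}E\langle\vec{x}+b\vec{x}_i\rangle\to E\langle\vec{x}-l_i\vec{x}_i\rangle((l_i+b)\vec{x}_i)\to E\langle\vec{z}\rangle[1].$$
Both applications are legitimate, as $b-1\leq p_i-2$ follows from $l_i+b\leq p_i-2$ and $l_i\geq 0$.

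The octahedral axiom applied to the composition $g\circ f$ then produces the distinguished triangle
$$E\langle\vec{x}\rangle\to E\langle\vec{x}+b\vec{x}_i\rangle\to B\to E\langle\vec{x}\rangle[1],$$
where $B$ is the cone of $g\circ f$, together with an auxiliary triangle
$$E\langle\vec{x}-l_i\vec{x}_i+(b-2)\vec{x}_i\rangle((l_i+1)\vec{x}_i)\to B\to E\langle\vec{x}-l_i\vec{x}_i\rangle((l_i+b)\vec{x}_i)\to\cdots.$$
To conclude it remains to identify $B$ with $E\langle\vec{x}-l_i\vec{x}_i+(b-1)\vec{x}_i\rangle((l_i+1)\vec{x}_i)$. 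I would do this by applying Lemma \ref{important triangles lemma} to $\vec{y}=\vec{x}-l_i\vec{x}_i+(b-2)\vec{x}_i$ (whose $i$-th coefficient in normal form is $b-2$) and then twisting the resulting triangle by $(l_i+1)\vec{x}_i$, producing a second distinguished triangle sharing the same outer terms as the octahedral one, but with middle term the desired extension bundle.

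The main obstacle is then to verify that these two distinguished triangles are actually isomorphic, not merely that they share outer terms. I expect this to follow from the exceptional character of extension bundles in $\textup{coh}\mbox{-}\mathbb{X}$: the connecting morphism space between the two outer extension bundles in $\underline{\textup{vect}}\mbox{-}\mathbb{X}$ should be at most one-dimensional, and since neither triangle can be split (both middle terms being indecomposable rank-two extension bundles rather than direct sums of rank-two bundles), both connecting maps are non-zero and hence scalar multiples of each other. This forces $B\cong E\langle\vec{x}-l_i\vec{x}_i+(b-1)\vec{x}_i\rangle((l_i+1)\vec{x}_i)$, completing the induction.
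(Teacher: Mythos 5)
Your overall strategy is the same as the paper's: induct on $b$, apply the octahedral axiom to the composite $E\langle\vec{x}\rangle\to E\langle\vec{x}+(b-1)\vec{x}_i\rangle\to E\langle\vec{x}+b\vec{x}_i\rangle$, and identify the cone $B$ of the composite by comparing the auxiliary octahedral triangle with the $(l_i+1)\vec{x}_i$-twist of the triangle of Lemma \ref{important triangles lemma} for $\vec{y}=\vec{x}-l_i\vec{x}_i+(b-2)\vec{x}_i$, using that the space of connecting morphisms is one-dimensional. However, two steps in your justification are not yet sound. First, the one-dimensionality of $\underline{\textup{Hom}}(F_2,F_1[1])$ does not follow from the ``exceptional character'' of extension bundles: exceptionality controls endomorphisms and self-extensions of a single object in $\textup{coh}\mbox{-}\mathbb{X}$, and says nothing about stable morphisms between two \emph{different} extension bundles. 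The paper gets this by first rewriting $F_1[1]$ via the shift formula of Proposition \ref{special shift}, so that the space in question becomes $\underline{\textup{Hom}}\big(E\langle\vec{x}-l_i\vec{x}_i\rangle, E\langle\vec{x}-l_i\vec{x}_i+(p_i-b)\vec{x}_i\rangle\big)$ up to a common twist, and then invoking the explicit Hom-computation of \cite[Lemma 4.5]{[KLM]}. You need to supply this computation; without it the comparison argument has no basis.

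Second, your argument that the octahedral triangle $F_1\to B\to F_2\to F_1[1]$ is non-split is circular: you justify non-splitness by saying the middle term is an indecomposable rank-two extension bundle, but $B$ is exactly the object you are trying to identify, so you cannot assume this. (For the comparison triangle the argument is fine, since its middle term is a known extension bundle.) Note that the split alternative $B\cong F_1\oplus F_2$ has the same class in $K_0(\mathbb{X})$ as the desired extension bundle, so it cannot be ruled out on numerical grounds; one genuinely has to show that the connecting map $F_2\to F_1[1]$, which the octahedron exhibits as the composite $F_2\to E\langle\vec{x}+(b-1)\vec{x}_i\rangle[1]\to F_1[1]$, is non-zero (again via the Hom-computations of \cite[Lemma 4.5]{[KLM]}). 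The paper also passes over this point quickly, but in your write-up it is presented as the crux of the identification, so it should be argued non-circularly. With these two repairs the induction closes exactly as in the paper.
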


\begin{proof}
We prove \eqref{important triangles n2} by induction on $b$. During the proof  a triangle $X\to Y\to Z\to X[1]$ will be  denoted by $X\to Y\to Z$ for convenience.

If $b=1$, then (\ref{important triangles n2}) coincides with (\ref{important triangles}), there is nothing to prove. Now assume (\ref{important triangles n2}) holds for $b$ and plan to show it also holds for $b+1$. In fact, by octahedral axiom we have the following commutative diagram with exact triangles:
$$\xymatrix{
  E\langle \vec{x} \rangle \ar@{=}[d] \ar[r]  & E\langle \vec{x}+b\vec{x}_i\rangle \ar[d] \ar[r]  &  F_1\ar[d]\\
 E\langle \vec{x}\rangle \ar[r]  & E\langle \vec{x}+(b+1)\vec{x}_i\rangle \ar[d]  \ar[r] & F\ar[d] \\
 &F_2  \ar@{=}[r]  &  F_2.  }$$
By induction we know that
$$F_1=E\langle\vec{x}-l_i\vec{x}_i+(b-1)\vec{x}_i\rangle((l_i+1)\vec{x}_i) \textup{\quad and\quad} F_2=E\langle\vec{x}-l_i\vec{x}_i\rangle((b+l_i+1)\vec{x}_i).$$
By Proposition \ref{special shift} we have $$F_1[1]=E\langle\vec{x}-l_i\vec{x}_i+(p_i-2-(b-1))\vec{x}_i\rangle((b+l_i+1)\vec{x}_i).$$
Hence $$\underline{\textup{Ext}}^1(F_2, F_1)=\underline{\textup{Hom}}(F_2, F_1[1])=\underline{\textup{Hom}}(E\langle \vec{x}-l_i\vec{x}_i\rangle, E\langle\vec{x}-l_i\vec{x}_i+(p_i-2-(b-1))\vec{x}_i\rangle),$$
which has dimension one by \cite[Lemma 4.5]{[KLM]}. Moreover, by Lemma \ref{important triangles lemma} there is a non-split triangle:
\begin{equation}\label{third column}
E\langle \vec{x}-l_i\vec{x}_i+(b-1)\vec{x}_i\rangle \to E\langle \vec{x}-l_i\vec{x}_i+b\vec{x}_i\rangle \to E\langle \vec{x}-l_i\vec{x}_i\rangle(b\vec{x}_i).
\end{equation}
Therefore, the triangle $F_1\to F\to F_2$ has the form (\ref{third column}) up to a degree shift by $(l_i+1)\vec{x}_i$.
Hence $F=E\langle \vec{x}-l_i\vec{x}_i+b\vec{x}_i\rangle((l_i+1)\vec{x}_i)$, then we are done.
\end{proof}

The following two triangles (for $1\leq b\leq p_i-2$) are special cases of (\ref{important triangles}) and (\ref{important triangles n2}) respectively, which will be used frequently later:
$$E\langle (b-1)\vec{x}_i\rangle  \to E\langle b\vec{x}_i\rangle \to E(b\vec{x}_i)\to E[1]$$ and
$$ E \to E\langle b\vec{x}_i\rangle \to E\langle
(b-1)\vec{x}_i\rangle(\vec{x}_i)\to E[1].$$

\subsection{Tilting objects in $\underline{\textup{vect}}\mbox{-}\mathbb{X}$}

Recall that a basic object $T$ in $\underline{\textup{vect}}\mbox{-}\mathbb{X}$ is \emph{tilting} if the following two conditions hold:
\begin{itemize}
\item[(i)] $T$ is extension-free, i.e. for any $n\neq 0$, $\underline{\textup{Hom}}(T,T[n])=0$;
\item[(ii)] $T$ generates $\underline{\textup{vect}}\mbox{-}\mathbb{X}$, denoted by $\langle X\rangle=\underline{\textup{vect}}\mbox{-}\mathbb{X}$, i.e. the smallest triangulated subcategory of $\underline{\textup{vect}}\mbox{-}\mathbb{X}$ containing $T$ coincides with $\underline{\textup{vect}}\mbox{-}\mathbb{X}$.
\end{itemize}

\begin{thm}\textup{\cite[Theorem.6.1]{[KLM]}}\label{tilting extension bundles}
Let $\mathbb{X}$ be a weighted projective line of weight type $(p_1,p_2,p_3)$. Then  $T_{\textup{cub}}=\bigoplus\nolimits_{0\leq\vec{x}\leq2\vec{\omega}+\vec{c}}E\langle\vec{x}\rangle$ is a tilting object in $\underline{\textup{vect}}\mbox{-}\mathbb{X}$.
\end{thm}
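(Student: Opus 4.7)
The plan is to verify the two defining properties of a tilting object: extension-freeness, namely $\underline{\textup{Hom}}(T_{\textup{cub}},T_{\textup{cub}}[n]) = 0$ for all $n \neq 0$, and triangulated generation, namely $\langle T_{\textup{cub}}\rangle = \underline{\textup{vect}}\mbox{-}\mathbb{X}$.

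For extension-freeness, I would first invoke Lemma \ref{BasicPropInKLM}(2), which identifies the two-fold suspension $[2]$ with the canonical twist $(\vec{c})$, to reduce the vanishing to the cases $n = \pm 1$, and then by Serre duality further to $n = 1$. Next, I would rewrite $E\langle\vec{y}\rangle[1] = E\langle\vec{y}'\rangle(\vec{t})$ using Proposition \ref{special shift}, so that the task becomes computing $\underline{\textup{Hom}}\bigl(E\langle\vec{x}\rangle, E\langle\vec{y}'\rangle(\vec{t})\bigr)$ as $\vec{x}, \vec{y}$ range over the cube $\mathbb{L}' = \{\vec{z} \mid 0 \leq \vec{z} \leq 2\vec{\omega} + \vec{c}\}$. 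From \cite{[KLM]} there is an explicit description, generalizing Lemma \ref{BasicPropInKLM}(1), of the small support region in $\mathbb{L}$ where such a Hom is nonzero. The verification then reduces to checking in normal form, using Proposition \ref{x+y=z+u} to control coordinatewise mod-$p_i$ comparisons, that the parameters arising from the shift fall outside this support whenever $\vec{x}, \vec{y} \in \mathbb{L}'$.

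For generation, my plan is a two-layer induction. Since rank-one objects vanish in $\underline{\textup{vect}}\mbox{-}\mathbb{X}$, it suffices to capture all indecomposable bundles of rank $\geq 2$. I would first show that every extension bundle $E_L\langle\vec{y}\rangle$ lies in $\langle T_{\textup{cub}}\rangle$ by iteratively applying the distinguished triangles of Lemmas \ref{important triangles lemma} and \ref{important triangles lemma n2}: the triangle
$$E\langle\vec{x}\rangle \to E\langle\vec{x}+b\vec{x}_i\rangle \to E\langle\vec{x}-l_i\vec{x}_i+(b-1)\vec{x}_i\rangle\bigl((l_i+1)\vec{x}_i\bigr) \to E\langle\vec{x}\rangle[1]$$
shows that once an extension bundle of the cube is in $\langle T_{\textup{cub}}\rangle$, so are suitable line-bundle twists of other extension bundles; by sweeping over all three indices $i$ and all admissible $b$, and combining with the $\mathbb{L}$-orbit description of Proposition \ref{extension bundle which are in the same orbit}, one reaches every line-bundle twist of every extension bundle. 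A standard induction on rank, using that every indecomposable of rank $r \geq 3$ fits into an exact sequence with a rank-two subobject and a rank-$(r{-}1)$ quotient, then completes the argument.

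I expect the extension-freeness to be the main obstacle: Proposition \ref{special shift} throws the twist parameter $\vec{t}$ outside the cube, so keeping track of its normal form modulo each $p_i$ and verifying that the Hom formula returns zero for every pair $(\vec{x}, \vec{y}) \in \mathbb{L}'\times\mathbb{L}'$ requires a careful case analysis, particularly when one of the weights equals two and the normal-form coordinate along $\vec{x}_i$ is forced. The generation step, by contrast, is largely mechanical once the triangles of Lemma \ref{important triangles lemma n2} are in hand, as these triangles effectively encode the Klein-four orbit structure already uncovered in Section \ref{section2}.
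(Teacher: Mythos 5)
First, a point of comparison: the paper does not prove this statement at all --- Theorem \ref{tilting extension bundles} is imported verbatim from \cite[Theorem 6.1]{[KLM]} and used as a black box (e.g.\ in the proofs of Theorems \ref{tilting auslander bundle 1} and \ref{tilting auslander bundle 2}). So there is no in-paper proof to compare against, and your proposal has to be judged against the argument in \cite{[KLM]} itself. Your plan for extension-freeness is essentially the standard one and is workable, but note that the ``reduction to $n=\pm1$'' via $[2]=(\vec{c})$ is cosmetic: $\underline{\textup{Hom}}(E\langle\vec{x}\rangle,E\langle\vec{y}\rangle(k\vec{c}))$ for $k\neq0$ is not covered by Lemma \ref{BasicPropInKLM}(1) and still has to be killed by the full Hom-support formula for pairs of twisted extension bundles from \cite{[KLM]}; once you grant that formula, the check is indeed a finite normal-form computation in the spirit of Proposition \ref{x+y=z+u}, so this half of your argument is really a deferral to the cited source rather than a proof.

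The genuine gap is in the generation step. Your rank induction assumes that an indecomposable bundle $F$ of rank $r\geq3$ sits in a short exact sequence $0\to F'\to F\to F''\to0$ with $\textup{rk}(F')=2$ and that this sequence yields a distinguished triangle in $\underline{\textup{vect}}\mbox{-}\mathbb{X}$. Two problems: first, the quotient then has rank $r-2$, not $r-1$; more seriously, the triangulated structure on $\underline{\textup{vect}}\mbox{-}\mathbb{X}$ arises from the \emph{distinguished} Frobenius exact structure whose conflations are only the $\Hom(\mathscr{L},-)$-exact sequences, and an arbitrary rank-two subbundle obtained from a line-bundle filtration does not in general produce such a conflation, hence no triangle in the stable category. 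Without producing an actual conflation (or replacing the induction by some other device, e.g.\ the criterion that an extension-free $T$ in a Hom-finite triangulated category with Serre duality is tilting once $\underline{\textup{Hom}}(T,X[n])=0$ for all $n$ forces $X=0$), the induction does not get off the ground; this is precisely the part of \cite[Theorem 6.1]{[KLM]} that carries real content. The first layer of your induction --- capturing all twisted extension bundles starting from the cube via the triangles of Lemmas \ref{important triangles lemma} and \ref{important triangles lemma n2} --- is fine and matches how the present paper itself manipulates these triangles in Proposition \ref{3.7}.
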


The main result of this section is to give a positive answer to Question \ref{bB}.

From now onwards, let $\mathbb{X}$ be a weighted projective line of weight type $(2,p,q)$ with $p,q\geq 2$.
In this case, we have

\begin{lem}\label{shift functor is given by line bundle twist of x1}\textup{\cite[Proposition 6.8]{[KLM]}}
For weight type $(2, p, q)$ with $p,q\geq 2$, the suspension functor on $\underline{\textup{vect}}\mbox{-}\mathbb{X}$ is given by the line bundle twist $X\to X(\vec{x}_1)$.
\end{lem}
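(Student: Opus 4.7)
The plan is to reduce everything to a direct verification on extension bundles and then extend by generation. Since $p_1=2$, the canonical element satisfies $\vec{c}=2\vec{x}_1$, so Lemma~\ref{BasicPropInKLM}(2) already gives a natural isomorphism $[2]\cong(-)(\vec{c})=(-)(\vec{x}_1)^{\circ 2}$. Thus $(\vec{x}_1)$ is a ``square root'' of $[2]$, which makes it a strong candidate for being naturally isomorphic to $[1]$; the remaining task is to identify the two candidates.

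First, I would verify the isomorphism on every extension bundle. Let $E_L\langle\vec{x}\rangle$ be an extension bundle and write $\vec{x}=\sum_{i=1}^{3}l_i\vec{x}_i$ in normal form. Because $p_1=2$ we have $0\leq l_1\leq p_1-2=0$, so $l_1=0$ automatically. Applying Proposition~\ref{special shift} with $i=1$ gives
\[
E_L\langle\vec{x}\rangle[1]=E_L\langle(p_1-2-l_1)\vec{x}_1+l_2\vec{x}_2+l_3\vec{x}_3\rangle\bigl((l_1+1)\vec{x}_1\bigr)=E_L\langle\vec{x}\rangle(\vec{x}_1),
\]
so the two functors agree on the set $\mathcal{V}_2$ of all extension bundles.

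Next, I would promote this object-level coincidence to a natural isomorphism of triangle functors. The standard way is to consider the composite $\Phi:=[1]\circ(-\vec{x}_1)\colon\underline{\textup{vect}}\mbox{-}\mathbb{X}\to\underline{\textup{vect}}\mbox{-}\mathbb{X}$. This is an exact auto-equivalence, and by the previous step it fixes every extension bundle; moreover $\Phi^2\cong[2]\circ(-\vec{c})\cong\textup{Id}$ by Lemma~\ref{BasicPropInKLM}(2). By Theorem~\ref{tilting extension bundles}, the tilting object $T_{\textup{cub}}=\bigoplus_{0\leq\vec{x}\leq 2\vec{\omega}+\vec{c}}E\langle\vec{x}\rangle$ is built entirely from extension bundles and generates $\underline{\textup{vect}}\mbox{-}\mathbb{X}$ as a triangulated category. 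Because $\Phi$ is an exact functor fixing every indecomposable summand of $T_{\textup{cub}}$, it is naturally isomorphic to the identity on $\langle T_{\textup{cub}}\rangle=\underline{\textup{vect}}\mbox{-}\mathbb{X}$, which gives the desired natural isomorphism $[1]\cong(-)(\vec{x}_1)$.

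The main obstacle is the last step: turning the pointwise coincidence on $\mathcal{V}_2$ into a \emph{natural} isomorphism of triangle functors. One must ensure that the chosen isomorphisms $E_L\langle\vec{x}\rangle[1]\xrightarrow{\sim}E_L\langle\vec{x}\rangle(\vec{x}_1)$ are compatible with morphisms between extension bundles; concretely, one needs to verify that the induced automorphism of $\underline{\textup{End}}(T_{\textup{cub}})$ is inner (in fact trivial up to scalars on each summand), using the one-dimensionality of the relevant $\underline{\textup{Hom}}$-spaces provided by Lemma~\ref{BasicPropInKLM}(1). Once this rigidity on the endomorphism algebra of the tilting object is in place, the standard reconstruction theorem for tilting equivalences upgrades the pointwise identification to the claimed natural isomorphism of suspension functors.
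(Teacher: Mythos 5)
The paper offers no proof of this lemma: it is imported verbatim from \cite[Proposition 6.8]{[KLM]}, so your attempt has to be judged on its own merits rather than against an internal argument. Your object-level computation is correct, and is in fact already recorded in the paper as the remark following Proposition~\ref{special shift}: since $p_1=2$ forces $l_1=0$, formula \eqref{Shift[1]} with $i=1$ gives $E\langle\vec{x}\rangle[1]=E\langle\vec{x}\rangle(\vec{x}_1)$. The genuine gap lies in promoting this coincidence on the rank-two bundles to the actual statement, which is about the functor $[1]$ on all of $\underline{\textup{vect}}\mbox{-}\mathbb{X}$ (for wild weight types there are indecomposables of rank $\geq 3$ that are not extension bundles, so even the object-level claim is not yet established for them, and it cannot be propagated by taking cones precisely because cones do not transport along non-natural isomorphisms).

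The step you flag as the ``main obstacle'' is where the argument actually fails as written. An exact autoequivalence that fixes every indecomposable summand of a tilting object need \emph{not} be naturally isomorphic to the identity: for the Kronecker algebra, the autoequivalence induced by swapping (or independently rescaling) the two arrows fixes both indecomposable projectives yet acts nontrivially on the category. What is needed is that the automorphism induced by $\Phi$ on $\underline{\textup{End}}(T_{\textup{cub}})$ be inner, and one-dimensionality of the relevant $\underline{\textup{Hom}}$-spaces does not deliver this: a diagonal automorphism scaling the arrows of a quiver with oriented cycles by independent scalars fixes all vertices, acts by scalars on each one-dimensional arrow space, and is generically not inner. (Also, Lemma~\ref{BasicPropInKLM}(1) only controls morphisms between Auslander bundles, not between the general extension bundles occurring in $T_{\textup{cub}}$.) Even granting innerness, upgrading ``agrees on a tilting object and on its endomorphism algebra'' to a natural isomorphism of triangle functors requires Rickard--Keller machinery on standard equivalences that the paper does not set up. The proof in \cite{[KLM]} avoids all of this by working through the equivalence with the graded singularity category of $x_1^{2}+x_2^{p}+x_3^{q}$, where the quadratic occurrence of $x_1$ makes the cosyzygy functor equal to the degree shift by $\vec{x}_1$ functorially; if you want a self-contained argument, you should instead exhibit, naturally in $X$, a short exact sequence $0\to X\to I\to X(\vec{x}_1)\to 0$ with $I$ a sum of line bundles, rather than argue through $T_{\textup{cub}}$.
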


Let $E$ be an Auslander bundle.
Denote by $$T_1=\bigoplus\{E(a\bar{x}_2+b\bar{x}_3)|0\leq a\leq q-2, 0\leq b\leq p-2\}.$$

\begin{prop}\label{3.7} For any $\vec{x}\in\mathbb{L}$,
 $E(\vec{x})\in\langle T_1\rangle$.
\end{prop}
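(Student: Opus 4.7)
The plan is to reduce to a finite check via $\vec{x}_1$-twist invariance, identify which Auslander bundles arise from the summands of $T_1$ directly, and fill in the rest through the distinguished triangles of Lemmas~\ref{important triangles lemma} and~\ref{important triangles lemma n2}. By Lemma~\ref{shift functor is given by line bundle twist of x1}, $\langle T_1\rangle$ is closed under $\vec{x}_1$-twist, so it suffices to show $E(a\vec{x}_2+b\vec{x}_3) \in \langle T_1\rangle$ for $(a,b)$ in a system of representatives of $\mathbb{L}/\mathbb{Z}\vec{x}_1 \cong \mathbb{Z}/p \oplus \mathbb{Z}/q$. For weight type $(2,p,q)$ one has $\vec{c}=2\vec{x}_1$, hence $\bar{x}_2=\vec{x}_1-\vec{x}_3$ and $\bar{x}_3=\vec{x}_1-\vec{x}_2$. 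Rewriting each summand of $T_1$ as $E((a+b)\vec{x}_1-b\vec{x}_2-a\vec{x}_3)$ and reducing modulo the $\vec{x}_1$-twist, the summands cover precisely those $E(c\vec{x}_2+d\vec{x}_3)$ with $c\not\equiv 1\pmod p$ and $d\not\equiv 1\pmod q$. So only the $p+q-1$ Auslander bundles with $c\equiv 1\pmod p$ or $d\equiv 1\pmod q$ still need to be placed in $\langle T_1\rangle$.

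For the axis case $E(\vec{x}_2)$ (and symmetrically $E(\vec{x}_3)$), I use the family of distinguished triangles
$$E\langle(k-1)\vec{x}_2\rangle \to E\langle k\vec{x}_2\rangle \to E(k\vec{x}_2) \to E\langle(k-1)\vec{x}_2\rangle[1],\quad 1\le k\le p-2,$$
from Lemma~\ref{important triangles lemma} applied with $\vec{x}=(k-1)\vec{x}_2$ and $i=2$. By~\eqref{extension bundle shift to be auslander bundle}, $E\langle(p-2)\vec{x}_2\rangle\cong E((p-1)\vec{x}_2)[-1]$, and the latter lies in $\langle T_1\rangle$ by the first paragraph (since $p-1\not\equiv 1\pmod p$ and $0\not\equiv 1\pmod q$). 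A descending induction on $k$ now propagates membership: at each step the right-hand vertex $E(k\vec{x}_2)$ is in $\langle T_1\rangle$ (as $k\in\{2,\ldots,p-2\}$ satisfies $k\not\equiv 1\pmod p$) while the middle vertex $E\langle k\vec{x}_2\rangle$ is known from the previous step, so by the two-out-of-three property $E\langle(k-1)\vec{x}_2\rangle\in\langle T_1\rangle$. This puts $E\langle k\vec{x}_2\rangle$ in $\langle T_1\rangle$ for all $0\le k\le p-2$, and the triangle at $k=1$, with both outer vertices now known, delivers $E(\vec{x}_2)\in\langle T_1\rangle$.

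For the remaining mixed missing bundles $E(\vec{x}_2+d\vec{x}_3)$ and $E(c\vec{x}_2+\vec{x}_3)$, the same mechanism is iterated after twisting the basic triangles by $d\vec{x}_3$ or $c\vec{x}_2$; one invokes both Lemmas~\ref{important triangles lemma} and~\ref{important triangles lemma n2} in tandem to first include the intermediate extension bundles $E\langle k\vec{x}_2+d\vec{x}_3\rangle$ in $\langle T_1\rangle$, before extracting the missing Auslander bundles from the corresponding triangles. The main obstacle will be the careful two-index bookkeeping: each invoked triangle must have two vertices already known to lie in $\langle T_1\rangle$, so the induction must be ordered to reach every missing position (in particular the corner $(c,d)=(1,1)$) without circularity. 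The low-weight cases $p=2$ or $q=2$, where $T_1$ collapses to a single-axis sum and the pattern of missing positions degenerates, are handled separately by a simpler variant of the same argument.
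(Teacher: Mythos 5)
Your plan follows the paper's proof essentially step for step: reduce to a finite check via the $\vec{x}_1$-shift (Lemma \ref{shift functor is given by line bundle twist of x1}), obtain all $E(c\vec{x}_2+d\vec{x}_3)$ with $c\not\equiv 1\ (\mathrm{mod}\ p)$, $d\not\equiv 1\ (\mathrm{mod}\ q)$ directly from the summands of $T_1$ by rewriting $a\bar{x}_2+b\bar{x}_3$, and recover the missing bundles by descending induction along the triangles $\eta_k$ of Lemma \ref{important triangles lemma} together with \eqref{extension bundle shift to be auslander bundle}. The ordering you flag as the remaining obstacle is resolved exactly as you anticipate (and only Lemma \ref{important triangles lemma} is needed, not Lemma \ref{important triangles lemma n2}): one first settles $E(\vec{x}_2+j\vec{x}_3)$ for $j\neq 1$ via the twisted triangles $\eta_k(j\vec{x}_3)$, dually $E(c\vec{x}_2+\vec{x}_3)$ for $c\neq 1$, and only then the corner $E(\vec{x}_2+\vec{x}_3)$, whose triangle $\eta_1(\vec{x}_3)$ by that point has both outer terms already in $\langle T_1\rangle$.
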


\begin{proof}
Write $\vec{x}$  in normal form as $\vec{x}=\sum\limits_{i=1}^3 l_i\vec{x}_i+l\vec{c}$. Then by Lemma \ref{shift functor is given by line bundle twist of x1},
$$E(\vec{x})=E(\sum\limits_{i=1}^3l_i\vec{x}_i+l\vec{c})=E(l_2\vec{x}_2+l_3\vec{x}_3)[l_1+2l].$$
Since $\langle T_1\rangle$ is closed under the suspension functor action, it suffices to show that
\begin{equation}\label{only need to proof in 3.8}
E(l_2\vec{x}_2+l_3\vec{x}_3)\in\langle T_1\rangle \textup{\ \ for\ \ }0\leq l_2\leq p-1,\ 0\leq l_3\leq q-1.
\end{equation}
For any $0\leq a\leq q-2$ and $0\leq b\leq p-2$,
$$a\bar{x}_2+b\bar{x}_3=a(\vec{x}_2+\vec{\omega})+b(\vec{x}_3+\vec{\omega})=(a+b)\vec{x}_1+(p-b)\vec{x}_2+(q-a)\vec{x}_3-2\vec{c},$$
hence
\begin{equation}\label{cite 1}\begin{array}{ll}
E((p-b)\vec{x}_2+(q-a)\vec{x}_3)=E(a\bar{x}_2+b\bar{x}_3)[4-(a+b)]\in\langle T_1\rangle.
\end{array}
\end{equation}
Therefore, (\ref{only need to proof in 3.8}) holds except the cases $l_2=1$ or $l_3= 1$.

First we show that $E(\vec{x}_2)\in\langle T_1\rangle$. By (\ref{important triangles}) we have a sequence of triangles $\eta_k$ for $1\leq k\leq p-2$ as follows:
\begin{equation}\label{triangle series}
\eta_k: E\langle (k-1)\vec{x}_2 \rangle \to E\langle k\vec{x}_2 \rangle \to E(k\vec{x}_2)\to E\langle (k-1)\vec{x}_2\rangle[1].
\end{equation}
By (\ref{cite 1}) we know that $E(k\vec{x}_2)\in\langle T_1\rangle$ for any $k\neq 1$, then by (\ref{extension bundle shift to be auslander bundle}) we have $$E\langle(p-2)\vec{x}_2\rangle=E((p-1)\vec{x}_2)[-1]\in\langle T_1\rangle.$$
By recursively analysing on the triangles $\{\eta_{k}|2\leq k\leq p-2\}$ from $k=p-2$ to $2$, we obtain that $$E\langle k\vec{x}_2\rangle\in\langle T_1\rangle \text{\quad for\quad} 1\leq k\leq p-3.$$ Then both of $E$ and $E\langle \vec{x}_2 \rangle$ in $\eta_1$ belong to $\langle T_1\rangle$, which implies that $E(\vec{x}_2)\in\langle T_1\rangle$.

Secondly, we claim that $E(\vec{x}_2+j\vec{x}_3)\in\langle T_1\rangle$ for $2\leq j\leq q-1$. In fact, consider the following triangles $\eta_k(j\vec{x}_3)$ obtained from (\ref{triangle series}) by twisting with $j\vec{x}_3$:
$$E\langle (k-1)\vec{x}_2 \rangle(j\vec{x}_3) \to E\langle k\vec{x}_2 \rangle(j\vec{x}_3) \to E(k\vec{x}_2+j\vec{x}_3)\to E\langle (k-1)\vec{x}_2\rangle(j\vec{x}_3)[1].$$
Note that for $j\neq 1$ and $k\neq 1$,
$$E(k\vec{x}_2+j\vec{x}_3)\in\langle T_1\rangle \text{\;\;and \;\;} E\langle(p-2)\vec{x}_2\rangle(j\vec{x}_3)=E((p-1)\vec{x}_2+j\vec{x}_3)[-1]\in\langle T_1\rangle.$$
By recursively analysing on the triangles $\{\eta_{k}(j\vec{x}_3)|2\leq k\leq p-2\}$ from $k=p-2$ to $2$,
we obtain
$E(\vec{x}_2+j\vec{x}_3)\in\langle T_1\rangle \textup{\ \ for\ \ } 2\leq j\leq q-1.$ This proves the claim.

Therefore, we prove that (\ref{only need to proof in 3.8}) holds when $l_2=1$ and $l_3\neq 1$. Dually, one can prove that (\ref{only need to proof in 3.8}) also holds when $l_3=1$ and $l_2\neq 1$.

Finally, we need to show that $E(\vec{x}_2+\vec{x}_3)\in\langle T_1\rangle$.
Consider the triangles $\eta_k(\vec{x}_3)$:
$$E\langle (k-1)\vec{x}_2 \rangle(\vec{x}_3) \to E\langle k\vec{x}_2 \rangle(\vec{x}_3) \to E(k\vec{x}_2+\vec{x}_3)\to E\langle (k-1)\vec{x}_2\rangle(\vec{x}_3)[1].$$
Notice that $E\langle(p-2)\vec{x}_2\rangle(\vec{x}_3)=E((p-1)\vec{x}_2+\vec{x}_3)[-1]\in\langle T_1\rangle$, and $E(k\vec{x}_2+\vec{x}_3)\in\langle T_1\rangle$ for $2\leq k\leq p-2$. It follows that $E\langle k\vec{x}_2\rangle(\vec{x}_3)\in\langle T_1\rangle$ for $1\leq k\leq p-3$ by recursively analysing on the triangles $\{\eta_{k}(\vec{x}_3)|2\leq k\leq p-2\}$ from $k=p-2$ to $2$. Then in the triangle
$$\eta_1(\vec{x}_3): E(\vec{x}_3)\to E\langle \vec{x}_2 \rangle(\vec{x}_3) \to E(\vec{x}_2+\vec{x}_3)\to E(\vec{x}_3)[1],$$
both of $E(\vec{x}_3)$ and $E\langle \vec{x}_2 \rangle(\vec{x}_3)$ belong to $\langle T_1\rangle$. Hence $E(\vec{x}_2+\vec{x}_3)\in\langle T_1\rangle$.

This finishes the proof.
\end{proof}

Now we give the main result of this subsection.

\begin{thm}\label{tilting auslander bundle 1}
\begin{itemize}
\item[(1)] $T_1$ is a tilting object in $\underline{\textup{vect}}\mbox{-}\mathbb{X}$;

\item[(2)] the endomorphism algebra $\underline{\textup{End}}(T_1)^{op}\cong {\bf{k}}Q_1/I_1$, where $Q_1$ has the following shape and $I=\langle xy-yx,x^2,y^2\rangle$:
$$\xymatrix{
  (0,0) \ar[d]^{x} \ar[r]^{y} & (0,1) \ar[d]^{x} \ar[r]^{y} & (0,2) \ar[d]^{x}
  \ar@{.}[r]^{} & (0, p-2) \ar[d]^{x} \\
 (1,0) \ar[d]^{x} \ar[r]^{y} & (1,1) \ar[d]^{x} \ar[r]^{y} & (1,2) \ar[d]^{x}
  \ar@{.}[r]^{} & (1, p-2) \ar[d]^{x} \\
 (2,0)  \ar@{.}[d]\ar[r]^{y} & (2,1)  \ar@{.}[d] \ar[r]^{y} & (2,2)  \ar@{.}[d]
  \ar@{.}[r]^{} & (2, p-2)  \ar@{.}[d]\\
  (q-2,0) \ar[r]^{y} & (q-2,1)  \ar[r]^{y} & (q-2,2)
  \ar@{.}[r]^{} & (q-2, p-2).}$$
\end{itemize}
\end{thm}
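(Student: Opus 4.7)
The plan is to verify that $T_1$ satisfies the two defining conditions of a tilting object: (i) $\underline{\textup{Hom}}(T_1,T_1[n])=0$ for all $n\neq 0$, and (ii) $\langle T_1\rangle=\underline{\textup{vect}}\mbox{-}\mathbb{X}$. The description of the endomorphism algebra in part (2) will fall out of the same Hom-calculation in the case $n=0$. Since every summand of $T_1$ is a line bundle twist of the Auslander bundle $E$, Lemma \ref{BasicPropInKLM}(1) together with Lemma \ref{shift functor is given by line bundle twist of x1} reduces every Hom-space computation to the question of whether the difference element
\[
\vec{y}_n:=(a'-a)\bar{x}_2+(b'-b)\bar{x}_3+n\vec{x}_1
\]
lies in $\{0,\bar{x}_1,\bar{x}_2,\bar{x}_3\}\subset\mathbb{L}$, where $a,a'\in[0,q-2]$ and $b,b'\in[0,p-2]$.

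For extension-freeness, I will substitute the identities $\bar{x}_2=\vec{x}_1-\vec{x}_3$, $\bar{x}_3=\vec{x}_1-\vec{x}_2$, and $\bar{x}_1=2\vec{x}_1-\vec{x}_2-\vec{x}_3$ (which follow from $\vec{c}=2\vec{x}_1$ in weight type $(2,p,q)$) into $\vec{y}_n$, then solve the four membership equations in $\mathbb{L}$ using its presentation by the relations $2\vec{x}_1=p\vec{x}_2=q\vec{x}_3$. The size restrictions $|a'-a|\leq q-2<q$ and $|b'-b|\leq p-2<p$ will force the only possible solutions to be
\[
(a'-a,\,b'-b,\,n)\in\{(0,0,0),\,(1,0,0),\,(0,1,0),\,(1,1,0)\},
\]
producing $\vec{y}_0=0,\,\bar{x}_2,\,\bar{x}_3,\,\bar{x}_1$ respectively. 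The careful case analysis ruling out the $n\neq 0$ possibilities, including any coincidences that might arise from the lattice relations, is where I expect the main technical work to lie.

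For generation, Proposition \ref{3.7} already places every twist $E(\vec{z})$ of the Auslander bundle in $\langle T_1\rangle$. I plan to upgrade this by induction on $l_2+l_3$ to show that every twist $E\langle l_2\vec{x}_2+l_3\vec{x}_3\rangle(\vec{z})$, for $0\leq l_2\leq p-2$, $0\leq l_3\leq q-2$ and any $\vec{z}\in\mathbb{L}$, lies in $\langle T_1\rangle$: taking $\vec{x}=l_2\vec{x}_2+(l_3-1)\vec{x}_3$ and $i=3$ in Lemma \ref{important triangles lemma} (and symmetrically with $i=2$ when $l_3=0$) produces a distinguished triangle whose outer terms, after twisting by $\vec{z}$, both have strictly smaller $l_2+l_3$ and so lie in $\langle T_1\rangle$ by induction. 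Combining with Theorem \ref{tilting extension bundles} — which says $T_{\textup{cub}}$ generates $\underline{\textup{vect}}\mbox{-}\mathbb{X}$ and whose summands are precisely the above extension bundles — yields $\langle T_1\rangle=\underline{\textup{vect}}\mbox{-}\mathbb{X}$.

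Finally, the $n=0$ portion of the Hom-calculation will directly produce the quiver $Q_1$ and its relations. The four solution types above show that the total Hom from vertex $(i,j)$ to $(i',j')$ is one-dimensional precisely when $(i',j')\in\{(i,j),(i+1,j),(i,j+1),(i+1,j+1)\}$ within the index ranges, accounting for the vertices and the two families of arrows $x$ (downward, twist by $\bar{x}_2$) and $y$ (rightward, twist by $\bar{x}_3$). Since $\bar{x}_2+\bar{x}_3=\bar{x}_1$ and $\underline{\textup{Hom}}(E,E(\bar{x}_1))$ is one-dimensional, both compositions $xy$ and $yx$ are nonzero elements of this same one-dimensional space, so an appropriate rescaling of the arrows gives $xy=yx$. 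The relations $x^2=0$ and $y^2=0$ follow because $2\bar{x}_2=(q-2)\vec{x}_3$ and $2\bar{x}_3=(p-2)\vec{x}_2$ are not in $\{0,\bar{x}_1,\bar{x}_2,\bar{x}_3\}$ whenever the relevant arrows exist (i.e., whenever $p,q\geq 3$), so the targets of $x^2$ and $y^2$ admit no nonzero morphism from the source.
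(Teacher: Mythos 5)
Your proposal is correct and follows essentially the same route as the paper: the same reduction of all Hom-spaces to $\underline{\textup{Hom}}(E,E(\vec{y}))$ with $\vec{y}=(a'-a)\bar{x}_2+(b'-b)\bar{x}_3+n\vec{x}_1$ and the same case analysis over $\{0,\bar{x}_1,\bar{x}_2,\bar{x}_3\}$ for extension-freeness and for reading off $Q_1$ and $I_1$, and generation via Proposition \ref{3.7} combined with Theorem \ref{tilting extension bundles}. The only variation is that where the paper cites \cite[Corollary 4.22]{[KLM]} to place $T_{\textup{cub}}$ in $\langle E(\vec{x})\,|\,\vec{x}\in\mathbb{L}\rangle$, you re-derive that step by an induction on $l_2+l_3$ using the triangles of Lemma \ref{important triangles lemma}; this is a self-contained substitute for the citation and works as stated.
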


\begin{proof}
First, we show that $T_1$ is extension-free, i.e., $\underline{\textup{Hom}}(T,T[n])=0$ for any $n\neq 0$.

For contradiction assume  $\underline{\textup{Hom}}(E(a\bar{x}_2+b\bar{x}_3), E(a'\bar{x}_2+b'\bar{x}_3)[n])\neq 0$ for some $0\leq a, a' \leq q-2 $ and $0\leq b, b'\leq p-2$. By Lemma \ref{shift functor is given by line bundle twist of x1} we have
$$\underline{\textup{Hom}}(E(a\bar{x}_2+b\bar{x}_3), E(a'\bar{x}_2+b'\bar{x}_3)[n])\cong\underline{\textup{Hom}}(E, E((a'-a)\bar{x}_2+(b'-b)\bar{x}_3+n\vec{x}_1)).$$
Denote by $\vec{y}=(a'-a)\bar{x}_2+(b'-b)\bar{x}_3+n\vec{x}_1$. Then $\vec{y}$ has the following expression:
\begin{equation}\label{expression of y}
\vec{y}=(a'-a+b'-b+n)\vec{x}_1+(b-b')\vec{x}_2+(a-a')\vec{x}_3.
\end{equation}
By Lemma \ref{BasicPropInKLM}(1), $\vec{y}=0$ or $\bar{x}_i$ for some $1\leq i\leq 3$.
\begin{itemize}
\item[(i)]  $\vec{y}=0$, then we get $p|b-b', q|a-a'$, which imply that $a=a', b=b'$. It follows that $n\vec{x}_1=0$. Hence $n=0$.
\item[(ii)]  $\vec{y}=\bar{x}_1=\vec{c}-\vec{x}_2-\vec{x}_3$, then comparing the coefficients of $\vec{x}_2$ and $\vec{x}_3$ with (\ref{expression of y}), we get $b-b'=-1$, $a-a'=-1$, and $a'-a+b'-b+n=2$.
It follows that $n=0$.
\item[(iii)]  $\vec{y}=\bar{x}_2=\vec{x}_1-\vec{x}_3$, then $b-b'=0$, $a-a'=-1$, and $a'-a+b'-b+n=1$.
Hence $n=0$.
\item[(iv)] $\vec{y}=\bar{x}_3=\vec{c}-\vec{x}_1-\vec{x}_2$, then by similar arguments as in (iii), we obtain $b-b'=-1, a-a'=0$ and then $n=0$.
\end{itemize}

Secondly, we show that $T_1$ generates the triangulated category $\underline{\textup{vect}}\mbox{-}\mathbb{X}$. In fact, by Proposition \ref{3.7}, $E(\vec{x})\in\langle T_1\rangle$ for any $\vec{x}\in\mathbb{L}$. By Theorem \ref{tilting extension bundles}, $T_{\textup{cub}}$ is a tilting object in $\underline{\textup{vect}}\mbox{-}\mathbb{X}$, and by \cite[Corollary 4.22]{[KLM]}, $T_{\textup{cub}}\in\langle E(\vec{x})\,|\,0\leq\vec{x}\leq2\vec{\omega}+\vec{c}\rangle \subseteq\langle T_1\rangle$. Hence $\underline{\textup{vect}}\mbox{-}\mathbb{X}=\langle T_{\textup{cub}}\rangle=\langle T_1\rangle$.

Finally, by Lemma \ref{BasicPropInKLM} (1),  $\underline{\textup{Hom}}(E(a\bar{x}_2+\bar{x}_3), E(a'\bar{x}_2+b'\bar{x}_3))\neq 0$ if and only if $(a',b')=(a,b),(a+1,b),(a,b+1)$ or $(a+1,b+1)$. Moreover, in these cases, the Hom-space has dimension one. Hence the endomorphism algebra $\underline{\textup{End}}(T_1)^{op}\cong{\bf{k}}Q_1/I_1$, where each point $(a,b)$ in $Q$ corresponds to the Auslander bundle $E(a\bar{x}_2+b\bar{x}_3)$ for $0\leq a\leq q-2, 0\leq b\leq p-2$, and $x$ and $y$ correspond to the basis of the one dimensional space $\underline{\textup{Hom}}(E, E(\bar{x}_2))$ and $\underline{\textup{Hom}}(E, E(\bar{x}_3))$ respectively, and $I_1=\langle xy-yx,x^2,y^2\rangle$.

This finishes the proof.
\end{proof}

As an application of Theorem \ref{tilting auslander bundle 1}, we extend the result of \cite[Theorem 4.5]{Lenzing2011Weighted} to the more general case as follows.

Denote by
$$T_2=\bigoplus\{E(a\bar{x}_1+b\bar{x}_3)|0\leq a\leq q-2,0\leq b\leq p-2\}.$$

\begin{thm}\label{tilting auslander bundle 2}
\begin{itemize}
\item[(1)] $T_2$ is a tilting object in $\underline{\textup{vect}}\mbox{-}\mathbb{X}$;
\item[(2)] the endomorphism algebra $\underline{\textup{End}}(T_2)^{op}\cong {\bf{k}}Q_2/I_2$, where $Q_2$ has the following shape
and $I_2=\langle xy-yx,x^2, y^2\rangle$:
\begin{align*}\xymatrix{
  (0,0)  \ar[r]^{y} & (0,1) \ar[ld]_{x} \ar[r]^{y} & (0,2) \ar[ld]_{x}
  \ar@{.}[r]^{} & (0, p-3)\ar[r]^{y}\ar@{.}[ld] & (0, p-2) \ar[ld]_{x} \\
 (1,0)  \ar[r]^{y} & (1,1) \ar[ld]_{x} \ar[r]^{y} & (1,2) \ar[ld]_{x}
  \ar@{.}[r]^{} & (1, p-3)\ar[r]^{y}\ar@{.}[ld] & (1, p-2)  \ar[ld]_{x} \\
 (2,0) \ar[r]^{y} & (2,1)  \ar@{.}[ld] \ar[r]^{y} & (2,2) \ar@{.}[ld]
  \ar@{.}[r]^{} & (2, p-3)\ar[r]^{y}\ar@{.}[ld]& (2, p-2) \ar@{.}[ld]\\
  (q-2,0) \ar[r]^{y} & (q-2,1)  \ar[r]^{y} & (q-2,2)
  \ar@{.}[r]^{} & (q-2, p-3)\ar[r]^{y} & (q-2, p-2).}
  \end{align*}

\end{itemize}
\end{thm}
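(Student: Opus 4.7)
The plan is to follow the strategy of Theorem~\ref{tilting auslander bundle 1} closely, adjusting for the identity $\bar{x}_1=\bar{x}_2+\bar{x}_3$ which holds for weight $(2,p,q)$ since $2\vec{x}_1=\vec{c}$. For the extension-freeness of $T_2$, I apply Lemma~\ref{shift functor is given by line bundle twist of x1} to identify
\[
\underline{\textup{Hom}}\bigl(E(a\bar{x}_1+b\bar{x}_3),\,E(a'\bar{x}_1+b'\bar{x}_3)[n]\bigr)\cong \underline{\textup{Hom}}\bigl(E,\,E(\vec{y})\bigr)
\]
with $\vec{y}=(a'-a)\bar{x}_1+(b'-b)\bar{x}_3+n\vec{x}_1$, and then use Lemma~\ref{BasicPropInKLM}(1) to restrict attention to $\vec{y}\in\{0,\bar{x}_1,\bar{x}_2,\bar{x}_3\}$. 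A case-by-case analysis under the bounds $|a'-a|\le q-2$, $|b'-b|\le p-2$, combined with the uniqueness of normal forms in $\mathbb{L}$, forces $n=0$ in each case; for $\vec{y}=\bar{x}_2$ the identity above gives the solution $(a'-a,b'-b)=(1,-1)$.

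For generation, I mirror Proposition~\ref{3.7}. Since $[1]$ is the twist by $\vec{x}_1$ and $\vec{c}=2\vec{x}_1$, showing $E(\vec{x})\in\langle T_2\rangle$ for every $\vec{x}\in\mathbb{L}$ reduces to verifying $E(l_2\vec{x}_2+l_3\vec{x}_3)\in\langle T_2\rangle$ for $0\le l_2\le p-1$, $0\le l_3\le q-1$. Rewriting the summands of $T_2$ in normal form,
\[
a\bar{x}_1+b\bar{x}_3 \;=\; (a+b)\vec{c}-b\vec{x}_1-(a+b)\vec{x}_2-a\vec{x}_3,
\]
produces most such Auslander bundles (up to shift) directly from the summands of $T_2$; the remaining indices, namely the entire row $l_3=1$ together with one exceptional index per row of the form $((1-a)\bmod p,\,-a\bmod q)$, will be filled by the same inductive triangle machinery used in Proposition~\ref{3.7}, built from the triangles of Lemma~\ref{important triangles lemma} in the $\vec{x}_2$- and $\vec{x}_3$-directions together with the boundary identifications $E\langle(p-2)\vec{x}_2\rangle(\vec{y})=E((p-1)\vec{x}_2+\vec{y})[-1]$ from Proposition~\ref{special shift}. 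Once every Auslander bundle lies in $\langle T_2\rangle$, a further application of Lemma~\ref{important triangles lemma n2} brings every extension bundle $E\langle\vec{x}\rangle$ into $\langle T_2\rangle$ too, so $\langle T_2\rangle=\langle T_{\textup{cub}}\rangle=\underline{\textup{vect}}\mbox{-}\mathbb{X}$ by Theorem~\ref{tilting extension bundles}.

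For part (2), Lemma~\ref{BasicPropInKLM}(1) shows that $\underline{\textup{Hom}}(E(a\bar{x}_1+b\bar{x}_3),E(a'\bar{x}_1+b'\bar{x}_3))$ is one-dimensional precisely when $(a'-a)\bar{x}_1+(b'-b)\bar{x}_3\in\{0,\bar{x}_1,\bar{x}_2,\bar{x}_3\}$, and via $\bar{x}_2=\bar{x}_1-\bar{x}_3$ this translates into $(a'-a,b'-b)\in\{(0,0),(1,0),(0,1),(1,-1)\}$. I take $y$ to be the horizontal $\bar{x}_3$-step and $x$ the diagonal $\bar{x}_2$-step; the $\bar{x}_1$-map $(a,b)\to(a+1,b)$ is then the common composition $xy=yx$, forcing the relation $xy-yx$. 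The relations $x^2=0$ and $y^2=0$ encode the vanishings $\underline{\textup{Hom}}(E,E(2\bar{x}_2))=0=\underline{\textup{Hom}}(E,E(2\bar{x}_3))$, which are immediate from Lemma~\ref{BasicPropInKLM}(1). The main obstacle lies in the generation step: the pattern of indices missed by the direct parametrisation of $T_2$ is considerably more intricate than the clean $l_2=1$ or $l_3=1$ pattern in Proposition~\ref{3.7}, so the triangle-based induction must be performed more carefully, interleaving arguments in both the $\vec{x}_2$- and $\vec{x}_3$-directions to exhaust all gaps.
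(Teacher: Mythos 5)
Your treatment of extension-freeness and of the endomorphism algebra matches the paper's (which simply invokes the same case analysis as in Theorem~\ref{tilting auslander bundle 1}), and your identification of the arrows $x,y$ with the $\bar{x}_2$- and $\bar{x}_3$-steps and of $xy=yx$ with the $\bar{x}_1$-step is correct. The problem is the generation step, which you yourself flag as ``the main obstacle'' and then leave as a plan rather than a proof. Your route --- redoing Proposition~\ref{3.7} for $T_2$ by cataloguing which $E(l_2\vec{x}_2+l_3\vec{x}_3)$ are hit directly and filling the gaps by interleaved triangle inductions in the $\vec{x}_2$- and $\vec{x}_3$-directions --- is not carried out: you assert a description of the missing indices without verification, and the inductions of Proposition~\ref{3.7} do not transport verbatim, because each recursion on the triangles $\eta_k(\vec{y})$ requires that \emph{all} the third terms $E(k\vec{x}_2+\vec{y})$ with $k\neq 1$ already lie in $\langle T_2\rangle$, and for your parametrisation of $T_2$ this hypothesis is exactly what is in doubt at the exceptional indices. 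As written, generation is not established.

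The paper avoids all of this by bootstrapping from $T_1$ instead of from $T_{\textup{cub}}$: since $T_1$ is already known to generate, it suffices to show $E(a\bar{x}_2+b\bar{x}_3)\in\langle T_2\rangle$. Writing $E(a\bar{x}_2+b\bar{x}_3)=E(a\bar{x}_1+(b-a)\bar{x}_3)$ and using $p\bar{x}_3=(p-2)\vec{x}_1$ (a pure shift by Lemma~\ref{shift functor is given by line bundle twist of x1}) reduces $b-a$ modulo $p$ to some $\lambda$ with $0\leq\lambda\leq p-1$; for $\lambda\leq p-2$ the object is a summand of $T_2$ up to shift, so only the single residue $\lambda=p-1$ remains. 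That one case is settled by the chain of triangles $\eta_k(\vec{z})$ with $\vec{z}=a\bar{x}_1+(p-1)\bar{x}_3$, whose third terms $E(k\vec{x}_2+\vec{z})=E(a\bar{x}_1+(p-1-k)\bar{x}_3)[k]$ for $1\leq k\leq p-1$ are \emph{all} visibly in $\langle T_2\rangle$ because $\vec{x}_2=\vec{x}_1-\bar{x}_3$, so the downward recursion from $k=p-2$ closes without any exceptional indices. You should replace your generation argument by this reduction; alternatively, if you insist on your route, you must actually exhibit and verify the triangle inductions at every index you claim is missing.
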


\begin{proof}
By similar arguments as in the first part of the proof of Theorem \ref{tilting auslander bundle 1} we obtain that $T_2$ is extension-free, and $\underline{\textup{End}}(T_2)^{op}={\bf{k}}Q_2/I_2$ where each point $(a,b)$ in $Q_2$ corresponds to the Auslander bundle $E(a\bar{x}_1+b\bar{x}_3)$ for $0\leq a\leq q-2, 0\leq b\leq p-2$, $x$ and $y$ correspond to the basis of the one dimensional space $\underline{\textup{Hom}}(E, E(\bar{x}_2))$ and $\underline{\textup{Hom}}(E, E(\bar{x}_3))$ respectively, and $I_2=\langle xy-yx, x^2, y^2\rangle$. So we only need to prove that $T_2$ generates $\underline{\textup{vect}}\mbox{-}\mathbb{X}$.

By theorem \ref{tilting auslander bundle 1}, $T_1=\bigoplus\{E(a\bar{x}_2+b\bar{x}_3)|0\leq a\leq q-2,0\leq b\leq p-2\}$ is a tilting object in $\underline{\textup{vect}}\mbox{-}\mathbb{X}$. Hence it suffices to to show that
$$E(a\bar{x}_2+b\bar{x}_3)\in\langle T_2\rangle \textup{\ \ for\ \ } 0\leq a\leq q-2 \textup{\ \ and\ \ } 0\leq b\leq p-2.$$
In fact, since $\bar{x}_1=\bar{x}_2+\bar{x}_3$, we have $$E(a\bar{x}_2+b\bar{x}_3)=E(a\bar{x}_1+(b-a)\bar{x}_3).$$
Note that $p\bar{x}_3=p(\vec{\omega}+\vec{x}_3)=p\vec{x}_1-p\vec{x}_2=p\vec{x}_1-\vec{c}=(p-2)\vec{x}_1$.
Assume $b-a=kp+\lambda$ for some integers $k$ and $\lambda$ with $0\leq \lambda\leq p-1$. Then by Lemma \ref{shift functor is given by line bundle twist of x1}, $$E(a\bar{x}_2+b\bar{x}_3)=E(a\bar{x}_1+(kp+\lambda)\bar{x}_3)=E(a\bar{x}_1+\lambda\bar{x}_3)[k(p-2)].$$
Hence it suffices to show that
$$E(a\bar{x}_1+\lambda\bar{x}_3)\in\langle T_2\rangle \textup{\ \ for\ \ }  0\leq a\leq q-2 \textup{\ \ and\ \ }
0\leq \lambda\leq p-1.$$
If $0\leq \lambda\leq p-2$, then $E(a\bar{x}_1+\lambda\bar{x}_3)$ is a direct summand of $T_2$, which of course belongs to $\langle T_2\rangle$. So we only need to show that
$$E(a\bar{x}_1+(p-1)\bar{x}_3)\in\langle T_2\rangle \textup{\ \ for\ \ }  0\leq a\leq q-2.$$

For convenience, we denote by $\vec{z}:=a\bar{x}_1+(p-1)\bar{x}_3$ in the following and claim that $E(\vec{z})\in\langle T_2\rangle$.
Note that
$\vec{x}_2=\vec{x}_1-\bar{x}_3$. For $1\leq k\leq p-1$, we have
$$E( k\vec{x}_2)(\vec{z})=E(k\vec{x}_2+a\bar{x}_1+(p-1)\bar{x}_3)=E(a\bar{x}_1+(p-1-k)\bar{x}_3)[k]\in\langle T_2\rangle.$$
It follows that
$$E\langle (p-2)\vec{x}_2\rangle(\vec{z})=E((p-1)\vec{x}_2+\vec{z})[-1] \in\langle T_2\rangle.$$
Consider the triangles $\eta_{k}(\vec{z})$ obtained from (\ref{triangle series}) by twisting with $\vec{z}$. By recursively analysing on the triangles $\{\eta_{k}(\vec{z})|1\leq k\leq p-2\}$ from $k=p-2$ to $1$, we finally obtain $E(\vec{z})\in\langle T'\rangle,$ as claimed. This finishes the proof.
\end{proof}

Combining with Theorem \ref{tilting auslander bundle 1}, Theorem \ref{tilting auslander bundle 2} and \cite[Theorem 6.1]{[KLM]}, we obtain derived equivalences between certain algebras:
\begin{cor}
Let $\mathbb{X}$ be a weighted projective line of weight type $(2, p, q)$ with $p,q\geq 2$. The following endomorphism algebras are derived equivalent.
\begin{itemize}
\item[-] $\textup{End}(T_{\textup{cub}})^{op}$ in Theorem \ref{tilting extension bundles};
\item[-] $\underline{\textup{End}}(T_1)^{op}$ in Theorem \ref{tilting auslander bundle 1};
\item[-] $\underline{\textup{End}}(T_2)^{op}$ in Theorem \ref{tilting auslander bundle 2}.
\end{itemize}
\end{cor}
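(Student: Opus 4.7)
The plan is to reduce the corollary to a direct application of the tilting theorem for algebraic triangulated categories. By Theorem \ref{tilting extension bundles}, Theorem \ref{tilting auslander bundle 1}(1) and Theorem \ref{tilting auslander bundle 2}(1), each of $T_{\textup{cub}}$, $T_1$ and $T_2$ is a tilting object in the same triangulated category $\underline{\textup{vect}}\mbox{-}\mathbb{X}$. Thus all three endomorphism algebras arise from tilting objects of a common triangulated category, and derived equivalence is exactly what one expects; the task is essentially bookkeeping once this common ambient category is identified.

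The key input I would invoke is the tilting theorem in the form proved by Keller (building on Happel): if $\mathcal{T}$ is a Hom-finite, Krull--Schmidt, algebraic triangulated category over ${\bf k}$ admitting a basic tilting object $T$, then the functor $\underline{\textup{Hom}}(T,-)$ induces a triangle equivalence $\mathcal{T}\simeq D^b(\mod \underline{\textup{End}}(T)^{op})$. I would then verify the hypotheses for $\mathcal{T}=\underline{\textup{vect}}\mbox{-}\mathbb{X}$: by \cite{[KLM]} it is the stable category of the Frobenius exact category $\textup{vect}\mbox{-}\mathbb{X}$, hence algebraic in the sense of Happel \cite{[H]}; Hom-finiteness and the Krull--Schmidt property are inherited from $\textup{coh}\mbox{-}\mathbb{X}$.

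Applying this equivalence to each of $T_{\textup{cub}}$, $T_1$ and $T_2$ in turn yields a chain of triangle equivalences
\begin{equation*}
D^b(\mod\underline{\textup{End}}(T_{\textup{cub}})^{op})\;\simeq\;\underline{\textup{vect}}\mbox{-}\mathbb{X}\;\simeq\;D^b(\mod\underline{\textup{End}}(T_1)^{op})\;\simeq\;D^b(\mod\underline{\textup{End}}(T_2)^{op}),
\end{equation*}
and composing them pairwise produces the three derived equivalences asserted in the corollary.

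There is essentially no real obstacle: the substantive content was already in Theorems \ref{tilting auslander bundle 1} and \ref{tilting auslander bundle 2}, where the authors verified that $T_1$ and $T_2$ are indeed tilting. The one small subtlety worth flagging is that the statement writes $\textup{End}(T_{\textup{cub}})^{op}$ without the underline, but since $T_{\textup{cub}}$ is declared as a tilting object of $\underline{\textup{vect}}\mbox{-}\mathbb{X}$ in Theorem \ref{tilting extension bundles}, this should be read as $\underline{\textup{End}}(T_{\textup{cub}})^{op}$; after that reinterpretation, the corollary is a formal consequence of the tilting theorem in the algebraic triangulated setting.
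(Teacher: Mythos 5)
Your proposal is correct and matches the paper's (essentially unstated) argument: the corollary is presented as an immediate consequence of the fact that $T_{\textup{cub}}$, $T_1$ and $T_2$ are all tilting objects in the same algebraic triangulated category $\underline{\textup{vect}}\mbox{-}\mathbb{X}$, so their endomorphism algebras are derived equivalent by the standard tilting theorem of Happel--Keller. The only caveat worth noting is that the equivalence $\underline{\textup{vect}}\mbox{-}\mathbb{X}\simeq D^b(\mod\underline{\textup{End}}(T)^{op})$ in the form you cite requires finite global dimension of the endomorphism algebra (otherwise one only gets the perfect derived category, which still suffices for derived equivalence by Rickard's theorem), but this does not affect the conclusion.
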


\noindent {\bf Acknowledgements.}\quad
This work was supported by the Natural Science Foundation of Xiamen (No. 3502Z20227184), the Natural Science Foundation of Fujian Province (No. 2022J01034), the National Natural Science Foundation of China (No.s 12271448 and 12301054), and the Fundamental Research Funds for Central Universities of China (No. 20720220043).

\bibliographystyle{plain}

\begin{thebibliography}{10}
\bibitem{[Birk]}
G. Birkhoff.
\newblock Subgroups of Abelian groups,
\newblock {\em Proc. Lond. Math. Soc.}, II. Ser., 38:385--401, 1934.

\bibitem{[Chen1]}
X.-W. Chen.
\newblock The stable monomorphism category of a Frobenius category.
\newblock {\em Math. Res. Lett.}, 18(1):125--137, 2011.p

\bibitem{[Chen]}
X.-W. Chen.
\newblock Three results on {F}robenius categories.
\newblock {\em Math. Z.}, 270(1-2):43--58, 2012.

\bibitem{[Chen2012]}
X.-W. Chen.
\newblock A recollement of vector bundles.
\newblock {\em Bull. Lond. Math. Soc.}, 44(2), 271--284, 2012.

\bibitem{[E2007]}
W.~Ebeling.
\newblock {\em Functions of several complex variables and their singularities},
  volume~83 of {\em Graduate Studies in Mathematics}.
\newblock American Mathematical Society, Providence, RI, 2007.
\newblock Translated from the 2001 German original by Philip G. Spain.

\bibitem{[G1974]}
A.~M. Gabri\`elov.
\newblock Dynkin diagrams of unimodal singularities.
\newblock {\em Funkcional. Anal. i Prilo\v{z}en.}, 8(3):1--6, 1974.

\bibitem{GaoZhang}
N.~Gao and P.~Zhang.
\newblock Gorenstein derived categories.
\newblock {\em J. Algebra}, 323(7):2041--2057, 2010.

\bibitem{[GL]}
W.~Geigle and H.~Lenzing.
\newblock A class of weighted projective curves arising in representation
  theory of finite-dimensional algebras.
\newblock In {\em Singularities, representation of algebras, and vector bundles
  ({L}ambrecht, 1985)}, volume 1273 of {\em Lecture Notes in Math.}, pages
  265--297. Springer, Berlin, 1987.

\bibitem{[H]}
D.~Happel.
\newblock {\em Triangulated categories in the representation theory of
  finite-dimensional algebras}, volume 119 of {\em London Mathematical Society
  Lecture Note Series}.
\newblock Cambridge University Press, Cambridge, 1988.

\bibitem{[HLXZ]}
W.~Hu, X.-H. Luo, B.-L. Xiong, and G.~Zhou.
\newblock Gorenstein projective bimodules via monomorphism categories and
  filtration categories.
\newblock {\em J. Pure Appl. Algebra}, 223(3):1014--1039, 2019.

\bibitem{[KST1]}
H. Kajiura, K. Saito, and A. Takahashi.
\newblock Matrix factorization and representations of quivers.
II. Type ADE case.
\newblock {\em Adv. Math.}, 211(1):327--362, 2007.

\bibitem{[KST2]}
H. Kajiura, K. Saito, and A. Takahashi.
\newblock Triangulated categories of matrix factorizations for regular systems of weights with $\epsilon= -1$.
\newblock {\em Adv. Math.}, 220(5):1602--1654, 2009.


\bibitem{[KLM2]}
D.~Kussin, H.~Lenzing, and H.~Meltzer.
\newblock Nilpotent operators and weighted projective lines.
\newblock {\em J. Reine Angew. Math.}, 685:33--71, 2013.

\bibitem{[KLM]}
D.~Kussin, H.~Lenzing, and H.~Meltzer.
\newblock Triangle singularities, {ADE}-chains, and weighted projective lines.
\newblock {\em Adv. Math.}, 237:194--251, 2013.

\bibitem{[Lad]}
S.~Ladkani.
\newblock On derived equivalences of lines, rectangles and triangles.
\newblock {\em J. Lond. Math. Soc. (2)}, 87(1):157--176, 2013.

\bibitem{Lenzing2011Weighted}
H.~Lenzing.
\newblock Weighted projective lines and applications.
\newblock In {\em Representations of algebras and related topics}, EMS Ser.
  Congr. Rep., pages 153--187. Eur. Math. Soc., Z$\ddot{u}$rich, 2011.

\bibitem{[ld]}
H.~Lenzing. and J. A. de la Pe\~na.
\newblock Extended canonical algebras and Fuchsian singularities.
\newblock {\em Math. Z.}, 268(1-2):143-167, 2011.

\bibitem{[LMR]}
H.~Lenzing, H.~Meltzer, and S.~Ruan.
\newblock Nakayama algebras and fuchsian singularities, 2022.

\bibitem{[LL]}
P. Liu and M. Lu.
\newblock Recollements of singularity categories and monomorphism categories.
\newblock {\em Comm. Algebra}, 43(6):2443--2456, 2015.

\bibitem{LuoZhang}
X.-H. Luo and P.~Zhang.
\newblock Monic representations and {G}orenstein-projective modules.
\newblock {\em Pacific J. Math.}, 264(1):163--194, 2013.

\bibitem{MarPena}
I.~R. Martins and J.~A. de~la Pe\~{n}a.
\newblock On local-extensions of algebras.
\newblock {\em Comm. Algebra}, 27(3):1017--1031, 1999.

\bibitem{[Rin]}
C.~M. Ringel.
\newblock {\em Tame algebras and integral quadratic forms}, volume 1099 of {\em
  Lecture Notes in Mathematics}.
\newblock Springer-Verlag, Berlin, 1984.

\bibitem{RS2006}
C.~M. Ringel and M.~Schmidmeier.
\newblock Submodule categories of wild representation type.
\newblock {\em J. Pure Appl. Algebra}, 205(2):412--422, 2006.

\bibitem{RS2008}
C.~M. Ringel and M.~Schmidmeier.
\newblock The {A}uslander-{R}eiten translation in submodule categories.
\newblock {\em Trans. Amer. Math. Soc.}, 360(2):691--716, 2008.

\bibitem{[RS]}
C.~M. Ringel and M.~Schmidmeier.
\newblock Invariant subspaces of nilpotent linear operators. {I}.
\newblock {\em J. Reine Angew. Math.}, 614:1--52, 2008.

\bibitem{[RZ]}
C.~M. Ringel and P.~Zhang.
\newblock From submodule categories to preprojective algebras.
\newblock {\em Math. Z.}, 278(1-2):55--73, 2014.

\bibitem{[RZ2017]}
C.~M. Ringel and P.~Zhang.
\newblock Representations of quivers over the algebra of dual numbers.
\newblock {\em J. Algebra}, 475:327--360, 2017.

\bibitem{[R]}
S. Ruan.
\newblock Recollements and Ladders for weighted projective lines.
\newblock {\em J. Algebra}, 578:213--240, 2021.

\bibitem{[Simson2015]}
D.~Simson.
\newblock Tame-wild dichotomy of {B}irkhoff type problems for nilpotent linear
  operators.
\newblock {\em J. Algebra}, 424:254--293, 2015.

\bibitem{[Simson2018]}
D.~Simson.
\newblock Representation-finite {B}irkhoff type problems for nilpotent linear
  operators.
\newblock {\em J. Pure Appl. Algebra}, 222(8):2181--2198, 2018.

\bibitem{XiongZhangZhang}
B.-L. Xiong, P.~Zhang, and Y.-H. Zhang.
\newblock Auslander-{R}eiten translations in monomorphism categories.
\newblock {\em Forum Math.}, 26(3):863--912, 2014.

\bibitem{[Z]}
P.~Zhang.
\newblock Monomorphism categories, cotilting theory, and
  {G}orenstein-projective modules.
\newblock {\em J. Algebra}, 339:181--202, 2011.

\end{thebibliography}

\end{document}